\newcommand{\rel}{\mathbb{R}}
\DeclareMathOperator{\de}{d}
\newcommand{\dt}[1]{\frac{\de\!#1 }{\de\! t}}
\newcommand{\ds}[1]{\frac{\de\!#1 }{\de\! s}}
\newcommand{\RN}{\overline{R}}
\newcommand{\nablaN}{\overline{\nabla}}
\newcommand{\RicN}{\overline{\Ric}}
\newcommand{\GammaN}{\overline{\varGamma}}
\DeclareMathOperator{\dtm}{det}
\DeclareMathOperator{\trc}{trace}
\DeclareMathOperator{\grp}{graph}
\DeclareMathOperator{\grph}{graph}
\DeclareMathOperator{\Ric}{Ric}
\DeclareMathOperator{\Id}{\bf{Id}}
\DeclareMathOperator{\e}{e}
\newtheorem{thm}{Theorem}[section]
\newtheorem{prop}[thm]{Proposition}
\newtheorem{lem}[thm]{Lemma}
\newtheorem{cor}[thm]{Corollary}
\theoremstyle{definition}
\newtheorem{example}[thm]{Example}
\newtheorem{remark}[thm]{Remark}
\numberwithin{equation}{section}
\begin{document}

\pagenumbering{roman}
\title{On the inverse mean curvature flow in warped product manifolds}
\date{}
\author{Thomas Mullins} 
\maketitle

\begin{abstract}
We consider the warped product manifold, $\rel_+ \times_{\Id} M^n$, with Riemannian metric $\gamma\equiv \de r^2 \oplus r^2 \sigma$, where $(M^n, \sigma)$ is a smooth closed Riemannian $n$-manifold. We investigate what sufficient curvature condition is required of $\sigma$ to ensure that a solution to the inverse mean curvature flow - commencing with a surface described as the graph of a global $C^{2,\alpha}$ function on $M^n$ - exists for all times $t>0$.
\end{abstract}

\setcounter{page}{0} 
\pagenumbering{arabic}
\tableofcontents

\section{introduction}
The general outward curvature flow (of which the inverse mean curvature flow is a special case) was studied by Gerhardt \cite{ger1} and Urbas \cite{urbas} in $\rel^{n+1}$ for a {\em starshaped} initial surface $S_0$, which is equivalent to there existing a $u_0:S^n \rightarrow \rel_+$ such that
\[
	S_0 = \grp u_0,
\]
where $S^n$ is the unit $n$-sphere. In both papers, the idea was to use polar coordinates to describe the flow of the surfaces and prove existence for all times $t>0$. This is equivalent (see Example \ref{polar coords}) to the flow of surfaces in the warped product $\rel_+ \times_{\Id} S^n$. So it was a natural question to ask: under what conditions on an arbitrary manifold $M^n$ will a solution to this flow in the warped product $\rel_+ \times_{\Id}M^n$ also exist for all times $t>0$?\\
\\
Indeed, starting out with a sufficiently smooth function $u_0$ from the compact $n$-manifold $M^n$ into the positive real numbers, defining an embedded hypersurface $M_0$ in the warped product $\rel_+ \times_{\Id} M^n$ in the obvious manner
\[
	M_0 \equiv \grp u_0 = \{ (u_0(p),p), p\in M^n \},
\] 
we investigate the curvature conditions required of $M^n$. We will show that a positive definite Ricci tensor suffices for the solution to indeed exist for all times $t>0$ and remains graphical thoughout the evolutionary process. We also show, via a standard rescaling method, that the solution, in a certain sense, is asymptotically `$\{\infty\} \times M^n$'. We assert the main theorem:
\begin{thm}[Main Theorem]
	Let $(M^n,\sigma)$ be a smooth closed Riemannian manifold of dimension $n$, whose curvature satisfies
\begin{equation} \label{ricci bound}
	\Ric_M(X) > 0
\end{equation}
for all $X\in TM$. Let $u_0:M^n \rightarrow \rel_+$ be of class $C^{2,\alpha}$ whose corresponding embedding
\[
	x_0:M^n \rightarrow N^{n+1}, \quad p \mapsto (u_0(p),p),
\] 
has strictly positive mean curvature $H_0$. Then the evolution equation
\begin{equation}\label{eq}\left\{
  \begin{array}{l l}
    \dot{x}  & = H^{-1}\nu \\
   x_0 & =  x(0,\cdot),
  \end{array} \right.\end{equation}
where $\nu$ is the outward pointing unit normal to the evolving hypersurfaces $M_t \equiv x(t,M)$, and $H$ is the mean curvature of $M_t$, has a unique solution of class $H^{2+\beta,\frac{2+\beta}{2}}(Q_{\infty}) \cap H^{2+m+\gamma,\frac{2+m+\gamma}{2}}(Q_{\epsilon,\infty})$ for any $0<\beta < \alpha$, $\gamma \in (0,1)$, $\epsilon > 0$ and $m\geq 0$ that exists for all times $t>0$. The rescaled surfaces
\[\tilde{M}_t = \tilde{x}(t,M) = \grph \tilde{u}(t,\cdot), \]
where $\tilde{u} \equiv u\e^{-t/n}$, converge exponentially fast to a constant embedding \[ M^n \xhookrightarrow{r_{\infty}}\lbrace r_{\infty} \rbrace\times  M^n ,\]
with
\begin{equation*}
	r_{\infty} = \left[ \frac{|M_0|}{|M^n|}\right]^{1/n}.
\end{equation*}
\end{thm}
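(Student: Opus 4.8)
The strategy is the standard one for inverse mean curvature flow (following Gerhardt and Urbas): reduce the geometric flow to a scalar parabolic PDE for the radial graph function $u$, establish a priori estimates (in the order $C^0$, gradient, $C^2$, then higher-order via Krylov--Safonov / Schauder), and then conclude long-time existence by a continuation argument. The curvature hypothesis $\Ric_M > 0$ will enter crucially in the gradient and second-order estimates, and compactness of $M^n$ will be used to extract uniform constants.

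\medskip

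\textbf{Step 1: The scalar flow equation.} Writing $M_t = \grph u(t,\cdot)$, I would derive from \eqref{eq} the evolution equation for $u$. With the warped metric $\gamma = \de r^2 \oplus r^2\sigma$, the induced metric, normal, and mean curvature of a graph are explicit; one gets a quasilinear parabolic equation of the form $\dot u = -v\,H^{-1}$ (up to signs and factors of $v = \sqrt{1 + r^{-2}|\nabla u|_\sigma^2}$), where $H$ is itself a second-order quasilinear elliptic operator in $u$. Short-time existence with the stated initial regularity $C^{2,\alpha}$ follows from standard linear parabolic theory once $H_0 > 0$ ensures uniform parabolicity at $t=0$.

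\medskip

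\textbf{Step 2: A priori estimates.} (i) \emph{$C^0$ bound}: use the maximum principle on the scalar equation; at a spatial max/min of $u$ the gradient vanishes, $v=1$, and one obtains ODE barriers forcing $u$ to grow like $\e^{t/n}$ — more precisely $r_{\min}(0)\e^{t/n} \le u(t,\cdot) \le r_{\max}(0)\e^{t/n}$, which also motivates the rescaling $\tilde u = u\e^{-t/n}$. (ii) \emph{Gradient bound}: this is where $\Ric_M>0$ is needed. Following Gerhardt--Urbas, I would work with $\tilde u$ (or $\log u$) so the exponential growth is scaled out, compute the evolution of $v$ (or $|\nabla\tilde u|^2_\sigma$), and show that the "bad" terms are dominated by a term carrying the factor $\Ric_M(\nabla u, \nabla u) > 0$, which by compactness of $M$ is bounded below by (positive constant)$\times|\nabla u|^2$; a maximum-principle argument then gives a uniform (in $t$) gradient bound. (iii) \emph{$C^2$/curvature bound}: from uniform parabolicity (now genuine, given the $C^0$ and $C^1$ bounds) derive a bound on $H$ from above via the evolution equation for $H$ — the IMCF evolution $\dot H = \Delta_{M_t}(H^{-1}) - \ldots$ (divergence form) gives $H^{-1}$ bounds by the maximum principle, hence $H$ bounded above and below; then the second fundamental form is controlled using a Simons-type identity together with the $\Ric_M>0$ and $C^1$ bounds. (iv) \emph{Higher regularity}: with uniform $C^2$ bounds the equation is uniformly parabolic with bounded coefficients, so Krylov--Safonov gives $C^{2,\gamma}$ and then Schauder bootstrapping gives the stated $H^{2+m+\gamma,(2+m+\gamma)/2}$ estimates on $Q_{\epsilon,\infty}$, with the lower $H^{2+\beta,\ldots}$ regularity near $t=0$ limited only by the $C^{2,\alpha}$ initial data.

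\medskip

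\textbf{Step 3: Long-time existence and asymptotics.} Uniform-in-time a priori estimates plus short-time existence give, by the usual continuation/openness-closedness argument on the maximal existence interval, a solution for all $t>0$; uniqueness follows from the comparison principle for the scalar equation. For the asymptotics, set $\tilde u = u\e^{-t/n}$; the rescaled function solves a parabolic equation for which constants are the equilibria, and the scale-invariant quantities ($|\nabla\tilde u|$, oscillation of $\tilde u$) are already uniformly bounded. I would show the oscillation of $\tilde u$ decays exponentially — either by a direct maximum-principle/energy argument on the equation satisfied by $\tilde u - \bar{\tilde u}$, exploiting again $\Ric_M>0$ as a spectral-gap-type term, or by interpolation from decay of the gradient — so $\tilde u \to r_\infty$ exponentially in every $C^k$ norm. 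Finally $r_\infty$ is identified by noting that the enclosed "area element" evolves so that $|M_t| = |M_0|\,\e^{t}$ (the defining property of IMCF: $\tfrac{d}{dt}|M_t| = |M_t|$), and the limit surface $\{r_\infty\}\times M^n$ has area $r_\infty^n |M^n|$, forcing $r_\infty = (|M_0|/|M^n|)^{1/n}$.

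\medskip

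\textbf{Main obstacle.} The crux is the gradient estimate (Step 2(ii)) and its interplay with the $C^2$ estimate: one must find the right test quantity (some combination of $v$, $u$, and possibly $H$) whose parabolic evolution has a manifestly good sign, and this is exactly where the hypothesis $\Ric_M > 0$ must be inserted — without it the "reaction" terms coming from the ambient curvature of the warped product need not have a favorable sign, and the flow could lose graphicality or develop a curvature singularity. Getting the algebra of these evolution equations to close, uniformly in $t$ after rescaling, is the technical heart of the argument.
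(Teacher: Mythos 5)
Your overall strategy --- reduction to a scalar parabolic equation for the graph function, $C^0$ barriers giving growth like $\e^{t/n}$, a gradient estimate driven by $\Ric_M>0$, speed bounds, higher regularity, a continuation argument, rescaling, and identification of $r_\infty$ from $|M_t|=|M_0|\e^{t}$ --- is the same as the paper's. The genuine gap is in your Step 2(iii). The mean curvature evolves by
\[
\dt{H} \;=\; -\Delta_{M_t}\bigl(H^{-1}\bigr) \;-\; H^{-1}\bigl(|A|^2_g + \RicN(\nu)\bigr),
\]
so while the maximum principle does give an upper bound for $H$ (using $|A|^2_g\ge H^2/n$ and the decaying ambient term), the \emph{lower} bound does not follow as you claim: at a spatial minimum of $H$ (equivalently a maximum of $H^{-1}$, whose equation carries the reaction term $+H^{-3}|A|^2_g$) the $|A|^2_g$ term has the wrong sign and is not yet controlled. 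Your proposed repair --- bound $|A|$ first via a Simons-type identity and then close the loop --- is circular in the order you present it, since any useful curvature estimate and indeed uniform parabolicity already require the positive lower bound on the speed: the linearized coefficients carry a factor $F^{-2}$ with $F=Huv^{-1}$, so $C^0$ and $C^1$ bounds alone do not make the equation uniformly parabolic, contrary to your parenthetical remark.

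The paper's fix is to run the maximum principle on the speed itself rather than on $H$: with $\chi=\gamma(u\,\partial/\partial r,\nu)^{-1}$ one has $\dot{\varphi}=F^{-1}=\chi H^{-1}=v/(uH)$, and in the evolution equation for $\chi H^{-1}$ the zeroth-order terms $|A|^2_g+\RicN(\nu)$ coming from $\chi$ and from $H$ cancel identically, leaving no reaction term; hence $\sup(\chi H^{-1})$ and $\inf(\chi H^{-1})$ are monotone (equivalently, differentiate the scalar equation in $t$ and apply the linear maximum principle to $\dot\varphi$). Combined with $u\sim\e^{t/n}$ and $1\le v\le C$ this yields $c\le H\e^{t/n}\le C$ without ever estimating $|A|$, and the graphicality quantity $\chi$ is then bounded by an ODE comparison using the exponential decay of $|D\varphi|$ guaranteed by $\Ric_M>0$. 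Second-order and H\"older estimates are obtained not via Simons' identity and Krylov--Safonov but by differentiating the scalar equation (producing a linear parabolic system for $\dot\varphi$ and the $\varphi_k$ in which $\Ric_M$ appears explicitly) and invoking Ladyzhenskaya--Solonnikov--Ural'ceva/Gerhardt theory. With that substitution for your Step 2(iii)--(iv), the remainder of your outline (continuation, exponential decay of $|D\tilde u|$ and $|D^2\tilde u|$, and the volume identity fixing $r_\infty$) matches the paper.
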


\subsection{notation and conventions}\label{not and conv}
We are working primarily with a family of embeddings of an arbitrary $n$-manifold with a fixed metric $(M^n,\sigma)$ into an $(n+1)$-manifold also equipped with a fixed metric $(N^{n+1},\gamma)$. 
Coordinates on $N^{n+1}$ will be denoted by $\lbrace x^a \rbrace$ and on $M^n$ as $\lbrace y^i \rbrace$, where the indices from the set $\lbrace a,b,c,d,e \rbrace$ run from $0$ to $n$ and the indices of the set $\lbrace i,j,k,l,m,r,s \rbrace$ run from $1$ to $n$ throughout this paper. The indices will be raised and lowered (corresponding to contravariant and covariant quantities respectively) with the corresponding metric as is usual practise.\\
\\
With $g=\{g_{ij}\}$ we denote the induced metric on the embedded manifolds, $M_t$. Although $g=g(t)$, we suppress this dependence on $t$ to spare the reader a debauch of indices and arguments. We should, however, bear this dependence in mind. The covariant differential operators on $M^n$, $M_t$ and $N^{n+1}$ are signified either by $D=D(\sigma)$, $\nabla = \nabla(g)$ and $\nablaN = \nablaN(\gamma)$; or simply by subscripted indices, from the set from $1$ to $n$ for $M^n$ and $M_t$ (it will always be clear from the context whether we are on $M^n$ or $M_t$), alternatively from the set of indices running from $0$ to $n$ for covariant differentiation on $N^{n+1}$. In situations of potential ambiguity there will occasionally be a subscripted semicolon `$_;$' in front of the index to be covariantly differentiated. For the partial derivative, we either replace the subscripted semicolon with a subscripted comma, as is common in the literature, or include the partial derivative symbol $\partial$ with appropriate subscripted indices.\\
\\
We will mainly use tensor notation to describe tensor quantities on $M^n$, $M_t$ and $N^{n+1}$. For instance if $w$ is a function from $N^{n+1}$ to $\rel$ then $w_{;\,ab}$ (or simply $w_{ab}$) would represent the Hessian of $w$. For an arbitrary tensor $\mathcal{T}$ of valence $[ ^k _l ]$, we define the covariant derivative of that tensor $\nabla \mathcal{T}$ to be the $[^k _{l+1}]$-tensor by 
\begin{equation}\label{tensor covariant derivative} 
	\begin{split}
	(\nabla \mathcal{T})&(\omega_1,\dots,\omega_k,X^1,\dots,X^l,Z) = (\nabla_Z \mathcal{T})(\omega_1,\dots,\omega_k,X_1,\dots,X_l)\\
			\equiv & Z\left(\mathcal{T}(\omega_1,\dots,\omega_k,X^1,\dots,X^l) \right) \\
		& + \sum_p \mathcal{T}(\omega_1,\dots,\nabla_Z \omega_p,\dots, \omega_k,X^1,\dots,X^l) \\
			& - \sum_q \mathcal{T}(\omega_1,\dots,\omega_k,X^1,\dots,\nabla_Z X^q,\dots,X^l).
	\end{split}
\end{equation}
Here, the $\omega_j$ are covectors and the $X^j$ are vectors. This can be expressed more concisely in index notation as follows 
\begin{equation*} 
\mathcal{T}^{i_1 ... i_k} _{j_1 ... j_l;p} = \mathcal{T}^{i_1 ... i_k} _{j_1 ... j_l,p} + \sum_{q=1}^k \tensor{\varGamma}{^{i_q}_p_e}\mathcal{T}^{i_1 ..e.. i_k} _{j_1 ... j_l} - \sum_{r=1}^l \tensor{\varGamma}{^e_p_{j_r}} \mathcal{T}^{i_1 ... i_k} _{j_1 ..e.. j_l},
\end{equation*}
where $\tensor{\varGamma}{^c_a_b}$ are the connection coefficients. The Einstein summation convention will be used throughout.\\
\\
The norm of a tensor $\tensor{\mathcal{T}}{^k_i_j}$ of valence $\left[ ^1_2 \right]$ with respect to the metric $\sigma$ is defined by
\begin{equation*}
	|\mathcal{T}|^2_{\sigma} \equiv \sigma_{ij} \sigma^{kl}\sigma^{rs} \tensor{\mathcal{T}}{^i_k_r}\tensor{\mathcal{T}}{^j_l_s}.
\end{equation*}
The norms with respect to $g$ and $\gamma$ are defined analogously, as is the case for a tensor of arbitrary valence.\\
\\
At times we will need to distinguish between the connection coefficients and other tensor quantities (for instance, curvature) on $(M^n,\sigma)$, on $(M_t,g)$ and on $(N^{n+1},\gamma)$. We shall do so by a superscripted index of the metric for $M^n$ and $M_t$, and a bar for $N^{n+1}$, for example the Christoffel symbols of the connections ${^{\sigma}\tensor{\varGamma}{^k_i_j}}$, ${^g\tensor{\varGamma}{^k_i_j}}$ and $\tensor{\GammaN}{^c_a_b}$ for the corresponding quantities on $M^n$, $M_t$ and $N^{n+1}$ respectively.\\
\\
We adopt the standard convention for the commutator of the covariant derivative (the Riemann curvature tensor), corresponding to
\begin{equation} \label{commutator vec} 
\begin{split}
	R(U,V)X \equiv & \mathop{\nabla}\limits_{U}\mathop{\nabla}\limits_{V} X - \mathop{\nabla}\limits_{V}\mathop{\nabla}\limits_{U} X - \mathop{\nabla}\limits_{[U,V]} X \\
	R(U,V,X,Z) \equiv & \langle R(U,V)X,Z \rangle,
\end{split}
\end{equation} 
for tangent vectors $U,V$ and vector fields $X,Z$. In index notation, this can be expressed for a contravariant quantity as
\begin{equation}\label{curvature index notation}
	\nabla_a \nabla_b X^c - \nabla_b \nabla_a X^c = \tensor{R}{_a_b^c_d} X^d,
\end{equation}
and for a covariant quantity as
\begin{equation}\label{commutator form}
	\nabla_a \nabla_b \varphi_c - \nabla_b \nabla_a \varphi_c =\tensor{R}{_a_b_c^d} \varphi_d.
\end{equation}
The Ricci curvature, as remarked in the introduction, is the trace of the Riemann curvature tensor. Formally
\begin{equation}\label{ricci}
	\Ric(U,V) \equiv \sum^{n}_{i=1} R(U,E_i,V,E_i), 
\end{equation}
where $\{E_i\}_{i=1,...,n}$ is an orthonormal frame. Equivalently, in index notation can be expressed as
\begin{equation*}
	\Ric(U,V) = \tensor{R}{_i_k}U^i V^k \equiv \tensor{R}{_i_m_k^m}U^i V^k.
\end{equation*}
We can also define the quadratic form of the Ricci tensor, a notation which will at times prove useful:
\begin{equation*}
	\Ric(U) \equiv \Ric(U,U). 
\end{equation*}
\noindent The {\em function space}, $H^{2+m+\alpha,\frac{2+m+\alpha}{2}}(Q_T)$, in which our functions reside is defined in \eqref{fcn space}. The definition of the norm can be found in \cite[\S 2.5]{ger2}.
\newpage
\section{hypersurface geometry}

\subsection{the fundamental equations}
Let us assert the fundamental equations from the theory of isometric immersions. These relate the geometry of the submanifold to the geometry of the ambient space via the second fundamental form, $A\equiv \{h_{ij}\}_{i,j=1,\dots,n}$. Recall that if 
\[
	x:M^n \rightarrow N^{n+1}
\] 
is an {\em embedding} (or {\em immersion}) of $M^n$ into $N^{n+1}$, then $\de\! x_p$ has rank $n$ for all $p\in M^n$.\\
\\
A wide range of literature covers this topic including proofs of the following relations, see for instance \cite{do carmo, ger2}.
\begin{lem}[Gauss formula] 	
There exists a symmetric $\left[^0_2\right]$-tensor field $h:TM \times TM \rightarrow \rel$ that satisfies
	\begin{equation} \label{gauss formula}
		x_{ij} = - h_{ij}\nu.
	\end{equation}
\end{lem}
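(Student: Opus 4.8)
The plan is to decompose the ambient covariant Hessian of the embedding — the symmetric $(0,2)$-tensor on $M^n$ with values in the pullback bundle $x^{*}TN$, whose components are
$x^a_{;\,ij}=\partial_i\partial_j x^a-{}^{g}\varGamma^{k}_{\,ij}\,\partial_k x^a+\GammaN^{a}_{\,bc}\,\partial_i x^b\,\partial_j x^c$ —
into its parts tangent and normal to $M_t$, using the pointwise orthogonal splitting of $x^{*}TN$ into $TM_t$ and the line spanned by the unit normal $\nu$. The lemma follows once the tangential part is shown to vanish identically: the normal part is then automatically a scalar multiple of $\nu$, and one simply \emph{defines} $h_{ij}:=-\langle x_{ij},\nu\rangle$, whereupon $x_{ij}=-h_{ij}\nu$, the symmetry of $h$ being inherited from that of $x_{ij}$.

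First I would record that $x_{ij}$ is symmetric in $i,j$: this is immediate from $\partial_i\partial_j x^a=\partial_j\partial_i x^a$ together with the torsion-freeness of both Levi--Civita connections ${}^{g}\varGamma$ and $\GammaN$. The core of the argument is then to show $\langle x_{ij},x_k\rangle=0$ for every $k$, where the $x_k=\partial_k x$ span $TM_t$. The inputs are that $g_{ij}=\langle x_i,x_j\rangle$ and that $\nabla=\nabla(g)$ is metric-compatible, so $\nabla_k g_{ij}=0$; expanding the left-hand side by the product rule — the induced connection acting on the lower tensor index, the pullback of $\nablaN$ acting on the $x^{*}TN$ factor, and $\nablaN_c\gamma_{ab}=0$ annihilating the coefficient functions $\gamma_{ab}$ along $x$ — gives $\langle x_{ik},x_j\rangle+\langle x_{jk},x_i\rangle=0$. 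Cycling $i\to j\to k\to i$ produces two further such identities; adding two of them, subtracting the third, and invoking $x_{ij}=x_{ji}$ yields $2\langle x_{ij},x_k\rangle=0$. This is exactly the symmetrization trick that recovers the Christoffel symbols from the metric.

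Consequently $x_{ij}\perp TM_t$ at each point; since $\dim N^{n+1}=n+1$ forces the orthogonal complement of $TM_t$ to be the line spanned by $\nu$, we obtain $x_{ij}=\langle x_{ij},\nu\rangle\,\nu$, and putting $h_{ij}:=-\langle x_{ij},\nu\rangle$ completes the proof, with $h_{ij}=h_{ji}$. I do not anticipate a genuine obstacle — the result is classical — and the only point demanding care is the bookkeeping in the identity $\nabla_k g_{ij}=0$: one must be consistent about which connection differentiates which factor, and verify that the Christoffel terms $\GammaN^{a}_{\,bc}$ reassemble exactly into the covariant Hessians $x^a_{;\,ik}$ and $x^b_{;\,jk}$ with no leftover term.
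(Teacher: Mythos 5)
Your argument is correct. Note, though, that the paper itself offers no proof of this lemma: it is quoted as classical, with a pointer to do Carmo and Gerhardt, so there is no internal argument to compare against. Your route is a self-contained version of the textbook proof. Where the references typically define the second fundamental form as the normal part of $\nablaN_{x_i}x_j$ and identify the tangential part with $\nabla_i x_j$ by appealing to the uniqueness of the Levi--Civita connection (the tangential projection of $\nablaN$ is torsion-free and compatible with the induced metric), you carry out that uniqueness step by hand: writing $A_{ijk}=\gamma(x_{ij},x_k)$, the identity $\nabla_k g_{ij}=0$ together with parallelism of $\gamma$ gives $A_{ikj}+A_{jki}=0$, and cycling the indices and using the symmetry $x_{ij}=x_{ji}$ (torsion-freeness of both connections) yields $2A_{ijk}=0$ by the Koszul symmetrization. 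Since the normal bundle of a hypersurface is the line spanned by $\nu$, the purely normal Hessian is $\gamma(x_{ij},\nu)\nu$, and setting $h_{ij}:=-\gamma(x_{ij},\nu)$ gives \eqref{gauss formula} with $h$ symmetric. The one bookkeeping point you rightly flag --- that the product-rule expansion of $\gamma_{ab}x^a_i x^b_j$ under the tensor covariant derivative \eqref{tensor covariant derivative} really computes $\nabla_k g_{ij}$ --- is harmless, since covariant differentiation commutes with contraction and the pulled-back $\gamma$ is parallel. What your approach buys is independence from quoting Levi--Civita uniqueness as a black box; what the cited approach buys is brevity and the conceptual statement that the induced connection \emph{is} the tangential projection, which the paper uses implicitly elsewhere (e.g.\ in the proof of Lemma \ref{gauss eq lem} and in the evolution equations).
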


\begin{lem}[Weingarten equation]
	The unit normal $\nu$ satisfies the identity
		\begin{equation}\label{weingarten eq}
			\nu_i = h^k_{i}x_k.
		\end{equation}
\end{lem}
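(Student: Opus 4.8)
The plan is to exploit the two pointwise constraints that define the unit normal: that it has unit length and that it is orthogonal to the tangent space of the hypersurface. First I would observe that $\langle \nu,\nu\rangle_{\gamma}\equiv 1$ along $M_t$, so differentiating in the direction of a coordinate tangent vector $x_i$ (using the ambient connection $\nablaN$, with respect to which $\gamma$ is parallel) gives $\langle \nu_i,\nu\rangle = 0$. Hence $\nu_i\equiv\nablaN_{x_i}\nu$ has vanishing normal component and is therefore tangential; since $\{x_1,\dots,x_n\}$ spans the tangent space of $M_t$, there exist coefficients $a^k_i$ with $\nu_i = a^k_i x_k$.

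To identify the $a^k_i$ I would differentiate the orthogonality relation $\langle \nu,x_j\rangle = 0$ along $x_i$. Compatibility of $\nablaN$ with $\gamma$ yields $\langle \nu_i,x_j\rangle + \langle \nu,x_{ij}\rangle = 0$, where $x_{ij}$ is the second covariant derivative of the embedding appearing in the Gauss formula \eqref{gauss formula}. Substituting $x_{ij} = -h_{ij}\nu$ and using $\langle\nu,\nu\rangle = 1$ gives $\langle \nu_i,x_j\rangle = h_{ij}$. On the other hand, from $\nu_i = a^k_i x_k$ we obtain $\langle \nu_i,x_j\rangle = a^k_i g_{kj}$. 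Equating the two expressions gives $a^k_i g_{kj} = h_{ij}$, and raising the free index with $g^{jk}$ produces $a^k_i = h^k_i$, which is precisely \eqref{weingarten eq}.

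I expect no genuine obstacle here; the argument is entirely routine once the Gauss formula is available. The only point deserving care is the bookkeeping of covariant derivatives: $\nu_i$ must be read as the ambient covariant derivative $\nablaN_{x_i}\nu$ (legitimate since $\nu$ is a well-defined vector field along $M_t$), and when differentiating $\langle \nu,x_j\rangle$ one should work invariantly, or else keep explicit track of the induced Christoffel terms, so that the resulting identity is consistent with the sign and index conventions under which \eqref{gauss formula} was stated. Once this is arranged, the two scalar identities $\langle\nu_i,\nu\rangle = 0$ and $\langle\nu_i,x_j\rangle = h_{ij}$ determine $\nu_i$ uniquely and deliver the claimed formula.
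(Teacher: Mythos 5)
Your argument is correct and consistent with the paper's sign conventions: differentiating $\gamma(\nu,\nu)=1$ gives tangentiality of $\nu_i$, and differentiating $\gamma(\nu,x_j)=0$ together with the Gauss formula $x_{ij}=-h_{ij}\nu$ yields $\gamma(\nu_i,x_j)=h_{ij}$, hence $\nu_i=h^k_i x_k$ after raising the index with $g$. The paper does not prove this lemma itself but defers to the standard references, and your derivation is exactly the standard one given there, so there is nothing to add.
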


\begin{cor} The second derivative of $\nu$ satisfies
\begin{equation*}
	\nu_{ij} = h^k_{i;j}x_k - h^k_i h_{kj} \nu.
\end{equation*}
\end{cor}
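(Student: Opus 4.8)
The plan is simply to differentiate the Weingarten equation \eqref{weingarten eq} once more and then feed in the Gauss formula \eqref{gauss formula}. First I would make explicit what the symbol $\nu_{ij}$ denotes: it is the second covariant derivative of the normal field taken along $x$, i.e.\ the derivative with respect to the connection on $T^*M\otimes x^*TN$ assembled from the induced connection $\nabla=\nabla(g)$ on $M_t$ and the ambient connection $\nablaN=\nablaN(\gamma)$ on $N^{n+1}$. With this understood, the Leibniz rule is available: since $\{h^k_i\}$ is an honest $\left[^1_2\right]$-reducible $\left[^1_1\right]$-tensor field on $M_t$, while $x_k=\partial_k x$ is the local frame of a section of $x^*TN$, applying $\nabla_j$ to both sides of $\nu_i=h^k_i x_k$ produces
\begin{equation*}
	\nu_{ij} = h^k_{i;j}\,x_k + h^k_i\, x_{kj},
\end{equation*}
where $h^k_{i;j}$ is the induced covariant derivative of the Weingarten map and $x_{kj}$ is the iterated covariant derivative of the embedding.

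Next I would substitute the Gauss formula in the form $x_{kj}=-h_{kj}\nu$ into the second term, which immediately gives
\begin{equation*}
	\nu_{ij} = h^k_{i;j}\,x_k - h^k_i h_{kj}\,\nu,
\end{equation*}
the asserted identity. That is the entire argument.

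I do not expect any genuine obstacle here; the only point demanding attention is the bookkeeping of which connection differentiates which factor in the product $h^k_i x_k$, so that the mixed (van der Waerden--Bortolotti) covariant derivative is metric-compatible in the sense needed for Leibniz to apply — but this is standard and is already implicit in the way \eqref{gauss formula} and \eqref{weingarten eq} were phrased. As a consistency check I would remark that $h^k_i h_{kj}=h_{il}g^{lk}h_{kj}$ is symmetric in $i,j$, so the antisymmetric part of $\nu_{ij}$ is carried entirely by $h^k_{i;j}x_k$, in agreement with the Codazzi equation and the possible presence of ambient curvature; no contradiction arises.
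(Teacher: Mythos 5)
Your argument is correct and is exactly the intended derivation: the paper states this as an immediate corollary of the Weingarten equation \eqref{weingarten eq} and the Gauss formula \eqref{gauss formula}, i.e.\ differentiate $\nu_i = h^k_i x_k$ with the mixed covariant derivative and substitute $x_{kj} = -h_{kj}\nu$. Your remark on the bookkeeping of connections and the symmetry check are fine but not needed beyond what the paper already takes as standard.
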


\begin{lem}[Gauss equation]\label{gauss eq lem}
The Riemann curvature of the submanifold is related to the curvature of the ambient space by
	\begin{equation}\label{gauss eq}
		{^g\tensor{R}{_i_j_k_l}} = h_{ik}h_{jl} - h_{il}h_{jk} + \RN(x_i,x_j,x_k, x_l).
	\end{equation}
\end{lem}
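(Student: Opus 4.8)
The plan is to differentiate the Gauss formula \eqref{gauss formula} a second time and confront the outcome with the Ricci identity. The essential point of view is that $x_i = \partial_i x$ is a section of the pull-back bundle $x^{*}TN$ which in addition carries a covariant index on $M^n$; accordingly, every derivative $x_{ij}$, $x_{ijk}$ is taken in the connection coupling the pull-back of $\nablaN$ to the Levi-Civita connection $\nabla = \nabla(g)$ of the induced metric, and with this reading \eqref{gauss formula} says precisely that $x_{ij}$ has no tangential part.

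First I would differentiate $x_{ij} = -h_{ij}\nu$ once more, using the Weingarten equation \eqref{weingarten eq} to rewrite $\nu_k = h^l_k x_l$:
\[
	x_{ijk} = -h_{ij;k}\,\nu - h_{ij}\,\nu_k = -h_{ij;k}\,\nu - h_{ij}h^l_k\,x_l ,
\]
and then antisymmetrise in the two differentiation indices $j$ and $k$ to get
\[
	x_{ijk} - x_{ikj} = -\bigl(h_{ij;k} - h_{ik;j}\bigr)\nu - \bigl(h_{ij}h^l_k - h_{ik}h^l_j\bigr)x_l .
\]
On the other hand, the Ricci identity applied to the mixed quantity $x_i$ produces exactly two curvature contributions: the pull-back of the ambient Riemann tensor $\RN$ acting on the $x^{*}TN$-factor --- of the form $\RN(x_k,x_j)x_i$ in the ordering fixed by \eqref{commutator vec} --- together with the intrinsic curvature ${}^{g}R$ of $(M^n,g)$ acting on the covariant index $i$ as in \eqref{commutator form}. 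Equating the two expressions for $x_{ijk} - x_{ikj}$ and projecting onto the tangent space, i.e.\ taking the inner product with $x_l$ and using $\langle x_l, x_m\rangle = g_{lm}$, isolates the identity relating ${}^{g}R$, the $h$-quadratic terms, and $\RN(x_i,x_j,x_k,x_l)$; rearranging, relabelling, and invoking the symmetries of the Riemann tensor then produces \eqref{gauss eq}, while the complementary normal projection yields the Codazzi equation (not needed here). Equivalently, one may argue invariantly: write the Gauss formula as $\nablaN_X Y = \nabla_X Y - h(X,Y)\nu$ and, from \eqref{weingarten eq}, $\nablaN_X \nu = (h(X,\cdot))^{\sharp}$, substitute both into $\RN(X,Y)Z = \nablaN_X\nablaN_Y Z - \nablaN_Y\nablaN_X Z - \nablaN_{[X,Y]}Z$, and collect the tangential part.

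The one delicate step --- and the one I would carry out most carefully --- is the bookkeeping in the Ricci identity for $x_i$: because this quantity simultaneously carries an ambient index and an $M^n$ index, two curvature terms enter playing opposite roles, and their signs and index positions, along with the sign in the Gauss formula \eqref{gauss formula}, must be tracked consistently against the conventions \eqref{commutator vec}, \eqref{curvature index notation} and \eqref{commutator form} to land precisely on the form \eqref{gauss eq}. Everything else is the routine decomposition of a second covariant derivative into its tangential and normal parts.
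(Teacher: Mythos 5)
The paper itself offers no proof of this lemma --- it is quoted from the literature (do Carmo, Gerhardt) --- so there is no internal argument to compare against; your proposal is precisely the standard proof found in those references, and it is sound. Both of your variants work: differentiating \eqref{gauss formula}, using \eqref{weingarten eq}, antisymmetrising in the differentiation indices and applying the Ricci identity to the mixed quantity $x_i$ (tangential projection gives \eqref{gauss eq}, normal projection gives Codazzi), or equivalently substituting $\nablaN_X Y = \nabla_X Y - h(X,Y)\nu$ and $\nablaN_X\nu = (h(X,\cdot))^{\sharp}$ into the definition of $\RN$ and reading off the tangential part; this is also exactly the machinery the paper itself deploys later in proving \eqref{hij der comm}. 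The delicate point you flag is genuinely the only trap, and it is worth being concrete about it: if you apply the conventions \eqref{commutator vec} and \eqref{ricci} completely literally, the tangential projection comes out as ${^g\tensor{R}{_i_j_k_l}} = h_{il}h_{jk} - h_{ik}h_{jl} + \RN(x_i,x_j,x_k,x_l)$, i.e.\ with the $h$-quadratic terms carrying the opposite sign to \eqref{gauss eq}; the form \eqref{gauss eq} corresponds to the sign convention the paper actually uses in its subsequent computations (it is what makes Corollary \ref{ricci gauss} and \eqref{ric m} come out right, and what makes the round sphere have positive Ricci curvature). So when you ``track signs consistently,'' calibrate them against \eqref{commutator form}, Corollary \ref{ricci gauss} and the downstream use of the lemma rather than against a literal reading of \eqref{commutator vec}; with that calibration your argument lands exactly on \eqref{gauss eq}.
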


\noindent By contracting \eqref{gauss eq} with the inverse metric $\gamma^{ab}$ of the ambient space, we obtain identities for the Ricci curvature as well as the scalar curvature of the submanifold. These we give explicitly in the following two corollaries.
\begin{cor} \label{ricci gauss}
The Ricci curvature of the submanifold is related to the curvature of the ambient space by
	\begin{equation*}
		{^g\tensor{R}{_i_k}} = \RicN(x_i,x_k) - \RN(\nu,x_i,\nu,x_k) + Hh_{ik} - h_{i m} h^m_k
	\end{equation*}
\end{cor}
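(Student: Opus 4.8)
The plan is simply to trace the Gauss equation \eqref{gauss eq} over the index pair $j,l$ against the inverse induced metric $g^{jl}$ and to recognise the three resulting terms. By the conventions fixed in \eqref{commutator form}--\eqref{ricci}, the Ricci tensor of $(M_t,g)$ is ${^g\tensor{R}{_i_k}}=g^{jl}\,{^g\tensor{R}{_i_j_k_l}}$, so multiplying \eqref{gauss eq} by $g^{jl}$ and summing immediately produces the left-hand side of the asserted identity, while on the right-hand side the two intrinsic terms are handled at once: $g^{jl}h_{ik}h_{jl}=Hh_{ik}$ since $H=g^{jl}h_{jl}$ is the trace of the second fundamental form, and $-g^{jl}h_{il}h_{jk}=-h_{im}h^m_k$ after raising one index with $g$.

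The only term that requires an argument is $g^{jl}\RN(x_i,x_j,x_k,x_l)$, a contraction of the ambient Riemann tensor taken only over the tangent directions of $M_t$. To deal with it I would complete an orthonormal basis $\{E_\alpha\}_{\alpha=1}^n$ of $T_pM_t$ by the unit normal $\nu$ to an orthonormal basis $\{\nu,E_1,\dots,E_n\}$ of $T_pN^{n+1}$ --- equivalently, use the splitting $\gamma^{ab}=g^{jl}x^a_jx^b_l+\nu^a\nu^b$ of the inverse ambient metric. Then the definition \eqref{ricci} of $\RicN$ evaluated at the pair $(x_i,x_k)$ reads
\begin{equation*}
	\RicN(x_i,x_k)=g^{jl}\RN(x_i,x_j,x_k,x_l)+\RN(x_i,\nu,x_k,\nu),
\end{equation*}
since the tangential part of the frame sum is frame-independent and coincides with the $g^{jl}$-contraction. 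Invoking the pair symmetry $\RN(x_i,\nu,x_k,\nu)=\RN(\nu,x_i,\nu,x_k)$, solving for $g^{jl}\RN(x_i,x_j,x_k,x_l)$, and substituting back into the traced Gauss equation yields precisely
\begin{equation*}
	{^g\tensor{R}{_i_k}}=\RicN(x_i,x_k)-\RN(\nu,x_i,\nu,x_k)+Hh_{ik}-h_{im}h^m_k.
\end{equation*}

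There is no real obstacle here: this corollary has no analytic content and the argument is a one-line contraction. The only things to keep straight are the curvature symmetries and the index/sign conventions of \eqref{commutator form}--\eqref{ricci}, so that the tangential-trace identity and the placement of $\nu$ in $\RN(\nu,x_i,\nu,x_k)$ come out with the correct signs; once those are fixed the computation is mechanical, and the analogous contraction for the scalar curvature in the following corollary follows by tracing once more with $g^{ik}$.
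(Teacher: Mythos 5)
Your proof is correct and follows the same route the paper intends: the paper simply traces the Gauss equation \eqref{gauss eq} (its stated ``contraction with the inverse metric'') and reads off the Ricci identity, which is exactly your $g^{jl}$-contraction combined with the splitting $\gamma^{ab}=g^{jl}x^a_jx^b_l+\nu^a\nu^b$ and the pair symmetry of $\RN$. Your write-up just makes explicit the frame-completion step that the paper leaves implicit, with the signs and conventions of \eqref{ricci} handled correctly.
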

\begin{cor} \label{scalar gauss}
The Scalar curvature of the submanifold is related to the curvature of the ambient space by
	\begin{equation}
		{^gR} = \RN - 2\RicN(\nu) + H^2 - |A|^2_g,
	\end{equation}
where $A\equiv \{h_{ij}\}_{i,j = 1,\dots,n}$ (a notation that we will use on occasion).
\end{cor}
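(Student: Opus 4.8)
The plan is to derive the twice-contracted Gauss equation by tracing the once-contracted version already recorded in Corollary \ref{ricci gauss}. Concretely, I would contract the identity
\[
	{^g\tensor{R}{_i_k}} = \RicN(x_i,x_k) - \RN(\nu,x_i,\nu,x_k) + Hh_{ik} - h_{i m} h^m_k
\]
with the inverse induced metric $g^{ik}$. The left-hand side becomes $g^{ik}\,{^g\tensor{R}{_i_k}} = {^gR}$, by the definition of the scalar curvature of $M_t$. (Equivalently, one could double-contract the full Gauss equation \eqref{gauss eq} directly with $g^{ik}g^{jl}$; the curvature bookkeeping is identical, and the quadratic terms $h_{ik}h_{jl}-h_{il}h_{jk}$ then contract to $H^2-|A|^2_g$.)

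The one ingredient that does the real work on the right-hand side is the orthogonal decomposition of the ambient inverse metric along $M_t$,
\[
	\gamma^{ab} = g^{ij}x_i^a x_j^b + \nu^a\nu^b,
\]
which follows at once from $\{x_1,\dots,x_n,\nu\}$ being a frame adapted to the splitting $TN^{n+1}|_{M_t} = TM_t\oplus\rel\nu$, together with $g_{ij}=\gamma(x_i,x_j)$ and $|\nu|_\gamma = 1$. Using it termwise: $g^{ik}\RicN(x_i,x_k) = \RicN_{ab}\bigl(\gamma^{ab}-\nu^a\nu^b\bigr) = \RN - \RicN(\nu)$, where $\RN$ now denotes the ambient scalar curvature and $\RicN(\nu)=\RicN(\nu,\nu)$; likewise $g^{ik}\RN(\nu,x_i,\nu,x_k) = \RicN(\nu,\nu) - \RN(\nu,\nu,\nu,\nu) = \RicN(\nu)$, the last term vanishing by the antisymmetry of the Riemann tensor; $g^{ik}Hh_{ik} = H\,g^{ik}h_{ik} = H^2$ since $H$ is the trace of $A$ with respect to $g$; and $g^{ik}h_{im}h^m_k = h^k_m h^m_k = |A|^2_g$. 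Summing the four contributions gives
\[
	{^gR} = \bigl(\RN - \RicN(\nu)\bigr) - \RicN(\nu) + H^2 - |A|^2_g = \RN - 2\RicN(\nu) + H^2 - |A|^2_g,
\]
which is exactly the claimed identity.

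There is essentially no serious obstacle here; the proof is a single contraction once the metric decomposition is in hand. The only points demanding a little care are the consistent use of sign conventions — in particular ensuring the convention \eqref{commutator form} and the Ricci trace \eqref{ricci} are applied uniformly in passing from Lemma \ref{gauss eq lem} through Corollary \ref{ricci gauss} to the present statement — and tracing the curvature term $\RN(\nu,x_i,\nu,x_k)$ in the correct pair of slots so that the antisymmetry relation $\RN(\nu,\nu,\cdot,\cdot)=0$ is legitimately invoked.
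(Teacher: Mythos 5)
Your argument is correct and matches the paper's route: the paper obtains both Corollary \ref{ricci gauss} and this statement by contracting the Gauss equation \eqref{gauss eq}, and your trace of Corollary \ref{ricci gauss} with $g^{ik}$, using the decomposition $\gamma^{ab} = g^{ij}x_i^a x_j^b + \nu^a\nu^b$ to identify $\RN$, $\RicN(\nu)$, $H^2$ and $|A|^2_g$, is exactly that double contraction carried out in two steps. No gaps.
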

\noindent Our final equation of the fundamental classical variety relates the ambient curvature to the differential of the second fundamental form.
\begin{lem}[Codazzi equation] The arguments of the $\left[^0_3\right]$-tensor $\nabla h$ commute as follows
\begin{equation}\label{codazzi}
	h_{ij;k} - h_{ki;j} = \RN(\nu, x_i, x_j, x_k).
\end{equation}
\end{lem}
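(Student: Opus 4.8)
The plan is to differentiate the Weingarten relation once more and then commute covariant derivatives. The reason this is the natural route is that the unit normal $\nu$, regarded as a section of the bundle $x^{*}TN^{n+1}$, carries \emph{no} index from the set $\{1,\dots,n\}$, so commuting its covariant derivatives produces only the ambient Riemann curvature $\RN$ and none of the intrinsic curvature of $M_t$ --- which is exactly what the Codazzi equation asserts.

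First I would take the Corollary immediately preceding the statement,
\begin{equation*}
	\nu_{ij} = h^{k}_{i;j}x_{k} - h^{k}_{i}h_{kj}\nu ,
\end{equation*}
(itself obtained by differentiating \eqref{weingarten eq} and inserting \eqref{gauss formula}), and antisymmetrise it in $i$ and $j$. The normal components cancel, because $A=\{h_{ij}\}$ is symmetric and hence $h^{k}_{i}h_{kj}=h^{k}_{j}h_{ki}$, leaving the tangential identity
\begin{equation*}
	\nu_{ij}-\nu_{ji} = \bigl(h^{k}_{i;j}-h^{k}_{j;i}\bigr)x_{k}.
\end{equation*}
Next I would evaluate the left-hand side by the commutation rules \eqref{curvature index notation} and \eqref{commutator form}. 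Since $\nu$ has no lower $M$-index, the only term produced is the curvature of the ambient connection $\nablaN$ pulled back along $x$, giving $\nu_{ij}-\nu_{ji}=\RN(x_{i},x_{j})\nu$; and this vector is automatically tangent to $M_t$, since its normal component $\langle\RN(x_{i},x_{j})\nu,\nu\rangle=\RN(x_{i},x_{j},\nu,\nu)$ vanishes by the antisymmetry of $\RN$ in its last two arguments. Equating the two expressions for $\nu_{ij}-\nu_{ji}$, pairing with $x_{l}$ and lowering with $g$, yields
\begin{equation*}
	h_{li;j}-h_{lj;i} = \RN(x_{i},x_{j},\nu,x_{l}).
\end{equation*}
Finally I would relabel the indices and use the pair-exchange symmetry together with the antisymmetries of $\RN$ to bring the right-hand side into the form $\RN(\nu,x_{i},x_{j},x_{k})$, and use $h_{ik}=h_{ki}$; this gives \eqref{codazzi}.

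This is a short computation, so there is no genuine obstacle --- the only point requiring care is the bookkeeping of signs, which must be kept consistent with the conventions \eqref{commutator vec}, \eqref{gauss formula} and \eqref{weingarten eq}, and the one structural fact one must invoke, namely that the curvature of the connection pulled back along $x$ is the pull-back of $\RN$ (so that the Ricci identity may be applied to sections of $x^{*}TN^{n+1}$). As an independent check on the signs, one may instead differentiate the Gauss formula \eqref{gauss formula} a second time, obtaining $x_{ijk}=-h_{ij;k}\nu-h_{ij}\nu_{k}=-h_{ij;k}\nu-h_{ij}h^{l}_{k}x_{l}$ after substituting \eqref{weingarten eq}, antisymmetrise in the last two indices, and compare with the Ricci identity for the $x^{*}TN^{n+1}$-valued covector $x_{i}$: the tangential part recovers the Gauss equation \eqref{gauss eq}, while the normal part is precisely \eqref{codazzi}.
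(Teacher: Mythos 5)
The paper itself does not prove this lemma: it is quoted, together with the Gauss formula, the Weingarten equation and the Gauss equation, from the standard literature (do Carmo, Gerhardt), so there is no in-paper argument to compare yours against line by line. Your derivation is the standard one and is structurally sound: antisymmetrising $\nu_{ij}=h^k_{i;j}x_k-h^k_i h_{kj}\nu$ in $i,j$ kills the normal part by the symmetry of $h$; the commutator of the pulled-back connection acting on $\nu$ produces only the ambient curvature (your observation that $\RN(x_i,x_j)\nu$ is automatically tangential, since $\RN(x_i,x_j,\nu,\nu)=0$, is the right justification); and pairing with $x_l$ and using the pair-exchange and antisymmetries of $\RN$ yields \eqref{codazzi}. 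Your alternative check, differentiating \eqref{gauss formula} once more and reading off Codazzi from the normal part and the Gauss equation \eqref{gauss eq} from the tangential part, is precisely the computation in the cited references, and it is the same machinery the paper itself deploys in its proof of \eqref{hij der comm}. The one point you must nail down is the ordering in the Ricci identity for the pull-back bundle: with the convention that the later subscript is the later derivative (so $\nu_{ij}=(\nu_i)_{;j}$, which is forced by the preceding Corollary) and with \eqref{commutator vec} read literally, the commutator is $\RN(x_j,x_i)\nu$ rather than $\RN(x_i,x_j)\nu$; since the paper's own matching of the index conventions \eqref{curvature index notation}--\eqref{commutator form} to the multilinear form $R(U,V,X,Z)=\langle R(U,V)X,Z\rangle$ is itself loose, you should fix one consistent reading and verify it reproduces the sign in \eqref{codazzi} — this is bookkeeping, not a gap in the argument.
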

\noindent We now look at how the second derivatives of the second fundamental form commute. These identities were first discovered by Simons \cite{simons} and come in very useful.
\begin{lem} The second derivatives of $h_{ij}$ satisfy
	\begin{equation} \label{hij der comm}
	\begin{split}
		h_{ij;kl} = & h_{kl;ij} +  h_{mj} h_{il}h^m_k - h_{ml}h_{ij}h^m_k + h_{mj} h_{kl} h^m_i - h_{ml} h_{kj} h^m_i\\
			& + \RN( x_k, x_i, x_l, x_m) h^m_j +\RN( x_k, x_i, x_j, x_m) h^m_l + \RN(x_m, x_i, x_j, x_l ) h^m_k \\
			& + \RN(x_m, x_k, x_j, x_l) h^m_i+ \RN(\nu, x_i, \nu, x_j) h_{kl} - \RN(\nu, x_k, \nu, x_l) h_{ij}\\
			& + (\nablaN \RN)(\nu, x_i, x_j, x_k, x_l) + (\nablaN \RN)(\nu, x_k, x_i, x_l, x_j ).
		\end{split}
	\end{equation}
\end{lem}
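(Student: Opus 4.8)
The plan is to derive \eqref{hij der comm} by commuting covariant derivatives twice, each time using the standard Ricci identity \eqref{commutator form}, and then converting all intermediate curvature terms of $(M_t,g)$ into ambient-curvature terms via the Gauss equation \eqref{gauss eq} and the Codazzi equation \eqref{codazzi}. First I would use the Codazzi equation to rewrite $h_{ij;k}$ in the symmetric form $h_{ij;k} = h_{ik;j} + \RN(\nu,x_i,x_k,x_j)$ -- strictly the version with the two lower indices of the gradient transposed -- so that I have freedom to move the differentiated index into whichever slot is convenient. Then I would compute $h_{ij;kl}$ by differentiating the Codazzi relation once in the $l$-direction, producing $h_{ik;jl}$ plus a term $(\nablaN\RN)(\nu,x_i,x_k,x_j,x_l)$ together with lower-order terms coming from $\nablaN\RN$ hitting the moving frame vectors $x_i, x_k, x_j$ and the normal $\nu$; the Weingarten equation \eqref{weingarten eq} and the Gauss formula \eqref{gauss formula} turn those frame-derivative corrections into expressions quadratic in $h$ contracted with $\RN$. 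This accounts for the last line of \eqref{hij der comm} and some of the $\RN\cdot h$ terms.

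Next I would commute $h_{ik;jl}$ to $h_{ik;lj}$ using \eqref{commutator form} applied to the $\left[^0_3\right]$-tensor $\nabla h$, which generates three Riemann-curvature-of-$M_t$ terms of the shape ${^g\tensor{R}{_j_l_i^m}}h_{mk}$, ${^g\tensor{R}{_j_l_k^m}}h_{im}$ and ${^g\tensor{R}{_j_l}{_{(k)}^m}}\,(\nabla h)_{\dots}$ acting in the three slots $i$, $k$ and the gradient index. Substituting the Gauss equation \eqref{gauss eq} for each ${^gR}$ converts these into the mixed cubic-in-$h$ terms (the first line of the right-hand side of \eqref{hij der comm}) plus more $\RN(x_\bullet,x_\bullet,x_\bullet,x_\bullet)h^m_\bullet$ terms. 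Finally I would apply Codazzi once more in the opposite direction to bring $h_{ik;lj}$ back to $h_{kl;ij}$ (again at the cost of a $(\nablaN\RN)$ term and $h$-quadratic corrections from differentiating the frame), and collect everything. The $\RN(\nu,x_i,\nu,x_j)h_{kl}$ and $-\RN(\nu,x_k,\nu,x_l)h_{ij}$ terms arise precisely from the $\RN(\nu,\cdot,\cdot,\cdot)$ pieces of Codazzi being differentiated and then simplified with Weingarten.

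The main obstacle I anticipate is purely bookkeeping: keeping careful track of the index positions through two Codazzi applications and one Ricci-identity commutation, making sure that the antisymmetries of $\RN$ and the symmetry $h_{ij}=h_{ji}$ are used consistently so that the cubic terms combine into exactly the four displayed combinations $h_{mj}h_{il}h^m_k - h_{ml}h_{ij}h^m_k + h_{mj}h_{kl}h^m_i - h_{ml}h_{kj}h^m_i$ and so that the six curvature-times-$h$ terms on lines two and three do not over- or under-count. A secondary subtlety is that $\RN$ evaluated on the frame must be treated as a tensor on $N^{n+1}$ pulled back along $x$, so that ``differentiating $\RN(x_i,x_j,x_k,x_l)$'' means $(\nablaN\RN)(\,\cdot\,,x_i,x_j,x_k,x_l)$ plus four terms where $\nablaN$ hits an $x_\bullet$; the Gauss formula $x_{ij}=-h_{ij}\nu$ then replaces each such derivative by $-h_{\bullet\bullet}\,\RN(\nu,\dots)$, and one checks these either cancel or land among the stated terms. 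Since all constituent identities -- \eqref{gauss formula}, \eqref{weingarten eq}, \eqref{gauss eq}, \eqref{codazzi}, \eqref{commutator form} -- are already available, no new geometric input is needed; the proof is a direct, if lengthy, computation, and I would present it by tracking one representative term of each type and asserting that the remaining terms follow by the same manipulation.
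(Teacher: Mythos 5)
Your proposal follows essentially the same route as the paper's proof: differentiate the Codazzi relation \eqref{codazzi} to pass from $h_{ij;kl}$ to $h_{ki;jl}$ (picking up $\nablaN\RN$ plus Weingarten/Gauss-formula corrections), commute $j$ and $l$ via \eqref{commutator form} and substitute the Gauss equation \eqref{gauss eq} for the intrinsic curvature, then apply the differentiated Codazzi identity once more to reach $h_{kl;ij}$ and collect terms using the symmetries of $\RN$ and of $h$. The plan is sound and matches the paper's computation, up to the index bookkeeping you already flag as the main task.
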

\begin{proof}
	Using \eqref{gauss formula}, \eqref{weingarten eq},\eqref{gauss eq}, \eqref{codazzi} and the commutator of covariant tensor quantities \eqref{commutator form} and the tensor covariant derivative \eqref{tensor covariant derivative} (and not forgetting the symmetries of $\RN$)
	\begin{align}
\notag	h_{ij;kl} = & (h_{ki:j} + \RN(\nu, x_i, x_j, x_k)_{;l} \\ 
\notag		= & h_{ki;jl} + (\nablaN\RN)(\nu, x_i, x_j x_k x_l) \\ 
\label{three}& + \RN(x_m, x_i, x_j, x_k) h^m_l - \RN(\nu, x_i, \nu, x_k) h_{lj} - \RN(\nu, x_i, x_j, \nu) h_{kl} \\ 
\notag	h_{ki;jl} = & h_{ki;lj} + {^g\tensor{R}{_m_i_j_l}}h^m_k + {^g\tensor{R}{_m_k_j_l}}h^m_i + \RN(x_m, x_i, x_j, x_l) h^m_k +  \RN(x_m, x_k, x_j, x_l) h^m_i\\ 
\notag		= & h_{ki;lj} + h_{mj} h_{il}h^m_k - h_{ml}h_{ij}h^m_k + h_{mj} h_{kl} h^m_i - h_{ml} h_{kj} h^m_i\\
\label{two}	& + \RN(x_m, x_i, x_j, x_l) h^m_k +  \RN(x_m, x_k, x_j, x_l) h^m_i\\ 
\notag	h_{ki;lj} = & h_{kl;ij} + (\nablaN\RN)(\nu, x_k, x_i, x_l, x_j) \\ 
\label{one}	&+ \RN(x_m, x_k, x_i, x_l) h^m_j - \RN(\nu, x_k, \nu, x_l) h_{ij} - \RN( \nu, x_k, x_i, \nu) h_{jl}. 
	\end{align}
Plugging \eqref{one} into \eqref{two} and \eqref{two} into \eqref{three} and using again the symmetries of the full tensor $\RN$ gives the desired result.
\end{proof}
\begin{cor}[Simons' identity]
The Laplacian of the second fundamental form satisfies
	\begin{equation}
		\begin{split}
			\Delta_{M^{\prime}} h_{ij} = & H_{ij} - |A|^2_g h_{ij} + H h_{im} h^m_j + H \RN(\nu, x_i, \nu, x_j)- \RicN(\nu) h_{ij} \\
					& + g^{kl}\big[ \RN( x_k, x_i, x_l, x_m) h^m_j  +\RN( x_k, x_i, x_j, x_m) h^m_l + \RN(x_m, x_i, x_j, x_l ) h^m_k \\
					& + \RN(x_m, x_k, x_j, x_l) h^m_i + (\nablaN_{x_l} \RN)(\nu, x_i, x_j, x_k) + (\nablaN_{ x_j } \RN)(\nu, x_k, x_i, x_l)\big].
		\end{split}
	\end{equation}
\end{cor}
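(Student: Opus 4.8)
The plan is to derive Simons' identity directly from the commutator formula \eqref{hij der comm} by contracting the free index pair $(k,l)$ against the induced metric $g^{kl}$. Since $\nabla = \nabla(g)$ is the Levi-Civita connection of $g$, the metric is parallel and $g^{kl}$ passes through covariant differentiation; hence $g^{kl} h_{ij;kl} = \Delta_{M'} h_{ij}$ on the left-hand side, while the leading term on the right becomes $g^{kl} h_{kl;ij} = (g^{kl} h_{kl})_{;ij} = H_{ij}$ using $g^{kl}h_{kl} = H$. This produces the $H_{ij}$ term with no further work.

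Next I would dispose of the four cubic terms $h_{mj} h_{il}h^m_k - h_{ml}h_{ij}h^m_k + h_{mj} h_{kl} h^m_i - h_{ml} h_{kj} h^m_i$ of \eqref{hij der comm}. Tracing with $g^{kl}$: the second term yields $-|A|^2_g\, h_{ij}$ by the definition of $|A|_g$, the third yields $H h_{im}h^m_j$ since $g^{kl}h_{kl}=H$, and the first and fourth cancel — after raising an index and relabelling, $g^{kl}h_{mj}h_{il}h^m_k = h_i{}^a h_a{}^b h_{bj} = g^{kl}h_{ml}h_{kj}h^m_i$. This leaves exactly the polynomial part $-|A|^2_g h_{ij} + H h_{im}h^m_j$ of the stated formula.

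Finally, for the curvature terms: the four tangential pieces $g^{kl}\big[\RN(x_k,x_i,x_l,x_m)h^m_j + \RN(x_k,x_i,x_j,x_m)h^m_l + \RN(x_m,x_i,x_j,x_l)h^m_k + \RN(x_m,x_k,x_j,x_l)h^m_i\big]$ and the two $\nablaN\RN$ terms are simply carried through the contraction unchanged — noting that the final slot of each $\nablaN\RN$ is the direction of differentiation, so $g^{kl}(\nablaN\RN)(\nu,x_i,x_j,x_k,x_l) = g^{kl}(\nablaN_{x_l}\RN)(\nu,x_i,x_j,x_k)$, and likewise for the other. The normal-normal terms are where the only genuine identity enters: $g^{kl}\RN(\nu,x_i,\nu,x_j)h_{kl} = H\RN(\nu,x_i,\nu,x_j)$ again by $g^{kl}h_{kl}=H$, while $-g^{kl}\RN(\nu,x_k,\nu,x_l)h_{ij} = -\RicN(\nu)h_{ij}$, using the completeness relation $\gamma^{ab} = g^{kl}x_k^a x_l^b + \nu^a\nu^b$ together with $\RN(\nu,\nu,\nu,\nu)=0$ from the antisymmetry of $\RN$ — precisely the kind of trace already used in Corollaries \ref{ricci gauss} and \ref{scalar gauss}. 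Assembling the pieces reproduces the claimed expression. There is no analytic difficulty here; the whole argument is one contraction plus index bookkeeping, and the only steps demanding real care are verifying the cancellation of the two cubic terms and tracing the $\RN(\nu,\cdot,\nu,\cdot)$ contributions correctly into $H\RN(\nu,x_i,\nu,x_j)$ and $\RicN(\nu)h_{ij}$, while keeping the sign and symmetry conventions for $\RN$ fixed in \eqref{commutator vec}–\eqref{ricci} consistent throughout.
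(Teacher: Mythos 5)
Your proposal is correct and follows exactly the paper's own route: contract the commutator identity \eqref{hij der comm} with $g^{kl}$ and use the (anti)symmetries of $\RN$, with the trace of $\RN(\nu,x_k,\nu,x_l)$ giving $\RicN(\nu)$ via the adapted frame. The term-by-term bookkeeping you supply (cancellation of the two cubic terms, $g^{kl}h_{kl}=H$, and reading the last slot of $\nablaN\RN$ as the differentiation direction) is precisely the detail the paper leaves implicit.
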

\begin{proof}
	Contract \eqref{hij der comm} with $g^{kl}$ and use the (anti)symmetric properties of the full tensor $\RN$.
\end{proof}

\subsection{geometry of the warped product} \label{wp geometry}
The {\em warped product}, $P\times_{h} S$, of two Riemannian manifolds, $(P^p,\varpi)$ and $(S^s,\varsigma)$, with {\em warping factor} $h$ is the product manifold, $P\times S$, equipped with the metric
\[
	\gamma^{P\times_{h} S} = \varpi \oplus  h^2 \varsigma,
\] 
In this manner, providing $h$ is not degenerate, $(P\times_{h} S, \gamma^{P\times_{h} S})$ also becomes a Riemannian manifold.\\
\\
In this section we look at some properties of warped product spaces in a more specialised setting. We consider the warped product $I\times_f M^n$, where $I$ is an arbitrary one dimensional manifold parameterised with $r$, $(M^n,\sigma)$ is a Riemannian $n$-manifold with metric $\sigma$, and $f$ is a positive monotonically increasing (preferably differentiable) function of $r$. Thus we have the metric 
\begin{equation}  \label{wp metric}
	\gamma =\de\!r^2 \oplus f^2 \sigma.
\end{equation} 
Indeed, this metric is conformal to a metric of the form 
\begin{equation}\tag{*}\label{metric}
	 \de\!\rho^2 \oplus \rho^2 \sigma, 
\end{equation} with conformal factor $\phi(\rho)$ where $\de \! r = \phi(\rho)\de \!\rho$ and $f(r) = \rho \phi(\rho)$,  (see \cite{petersen}).\\
\\
During the course of this work, we concern ourselves only with metrics of the form \eqref{metric}, or more accurately, with the warped product $\rel_+ \times_{\Id} M^n$. \\
\\
The arbitrarily chosen coordinates $\{ y^i \}$ of $M^n$ are extended via the radial function $r$ to coordinates $\{y^a\}$ of $N=\rel_+ \times_{\Id} M^n$ whereby
\[
	y^0 \equiv r.
\]
In this manner, the basis $\{ e_i \equiv \partial / \partial y^i\}_{i=1,\dots,n}$ of $TM$ is extended to a basis, $\{ e_a \}_{a=0,\dots,n}$, of $TN$. This allows for convenient indexing in what follows. Moreover, tensor quantities on $N^{n+1}$ will be dealt with in this basis, for example
\[
	\tensor{\RN}{_a_b_c_d} \equiv \RN(e_a,e_b,e_c,e_d).
\]
The level sets, $\lbrace r \rbrace \times M^n\equiv \lbrace r = \text{const} \rbrace$, of the radial function $r$ of the warped product, have the induced metric 
\begin{equation}\label{metric alpha}
	\alpha_{ij} = r^2\sigma_{ij},
\end{equation}
and the second fundamental form, $\beta_{ij},$ of the inclusion $M^n \xhookrightarrow{r} \lbrace r \rbrace \times  M^n $ is easily shown to be \begin{equation}\label{sff beta}
	\beta_{ij} = r\sigma_{ij}.
\end{equation}
\noindent We wish to express the connection on $N^{n+1}$ in terms of quantities on $M^n$ and the radial function $r$, whereby we consider surfaces  $\lbrace r \rbrace \times M^n$. Using the well known formula for the Levi-Civita connection coefficients (see for example \cite{jost})
\begin{equation}\label{christoffel define}
\tensor{\GammaN}{^c_a_b} = \frac{1}{2} \gamma^{c e} (\partial_{a} \gamma_{e b} + \partial_{b} \gamma_{a e} - \partial_{e} \gamma_{a b}),
\end{equation}
where $\partial_a \equiv \partial / \partial y^a$, the following fact almost derives itself
\begin{equation}\label{christoffel}
\tensor{\GammaN}{^c_a_b} = \left\{
  	\begin{array}{l l}
    		\tensor{\varGamma}{^k_i_j} 	&	\quad 		\text{if $(a,b,c)=(i,j,k)$}\\
    		-r\sigma_{ij}				& 	\quad 		\text{if $c=0, (a,b) =(i,j)$ }\\
		r^{-1} \delta^k_j  			& 	\quad 		\text{if $a=0, (b,c) =(j,k) \vee b=0, (a,c) = (j,k)$ }\\
		0 & \quad \text{otherwise}.
  \end{array} \right.
\end{equation}
\begin{lem}
	The Riemann curvature tensor of the warped product $\rel_+ \times_{\Id} M^n$ vanishes in the radial direction, i.e.
 \begin{equation}\label{curvature}
	\tensor{\RN}{_0_b_c_d} = 0.
\end{equation}
\end{lem}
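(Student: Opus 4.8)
The plan is to compute the relevant curvature components directly from the Christoffel symbols \eqref{christoffel} via the coordinate formula for the Riemann tensor. Recall that in a coordinate basis
\begin{equation*}
	\tensor{\RN}{_a_b^c_d} = \partial_a \tensor{\GammaN}{^c_b_d} - \partial_b \tensor{\GammaN}{^c_a_d} + \tensor{\GammaN}{^c_a_e}\tensor{\GammaN}{^e_b_d} - \tensor{\GammaN}{^c_b_e}\tensor{\GammaN}{^e_a_d},
\end{equation*}
so that $\tensor{\RN}{_0_b_c_d} = \gamma_{ce}\tensor{\RN}{_0_b^e_d}$. Since the metric \eqref{metric} splits as $\de r^2 \oplus r^2\sigma$, lowering the upper index either keeps it as $0$ (weighted by $1$) or as a spatial index (weighted by $r^2\sigma$), so it suffices to show $\tensor{\RN}{_0_b^c_d}=0$ for all $b,c,d$, which by the antisymmetry in the first pair already forces attention only to mixed terms.

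First I would dispose of the cases where the free index structure makes terms vanish trivially: whenever two or more of the slots are $0$, antisymmetry of $\RN$ in its first two and last two arguments kills the component (e.g. $\tensor{\RN}{_0_0_c_d}=0=\tensor{\RN}{_0_b_0_0}$), so the only case needing genuine computation is $\tensor{\RN}{_0_j^c_l}$ with $j,l$ spatial and $c$ either $0$ or spatial. For $c=0$: the derivative terms are $\partial_0\tensor{\GammaN}{^0_j_l} - \partial_j\tensor{\GammaN}{^0_0_l}$; the first is $\partial_r(-r\sigma_{jl}) = -\sigma_{jl}$ and the second is $\partial_j(0)=0$, while the quadratic terms $\tensor{\GammaN}{^0_0_e}\tensor{\GammaN}{^e_j_l} - \tensor{\GammaN}{^0_j_e}\tensor{\GammaN}{^e_0_l}$ reduce, using \eqref{christoffel}, to $0 - (-r\sigma_{jm})(r^{-1}\delta^m_l) = \sigma_{jl}$, so the total is $-\sigma_{jl}+\sigma_{jl}=0$. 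For $c=k$ spatial: the derivative terms are $\partial_0\tensor{\GammaN}{^k_j_l} - \partial_j\tensor{\GammaN}{^k_0_l} = \partial_r\tensor{\varGamma}{^k_j_l} - \partial_j(r^{-1}\delta^k_l)$; since $\tensor{\varGamma}{^k_j_l}$ are the Christoffel symbols of the fixed metric $\sigma$ and are $r$-independent, the first vanishes, and $\partial_j(r^{-1}\delta^k_l)=0$ too as $r$ is a separate coordinate; the quadratic terms $\tensor{\GammaN}{^k_0_e}\tensor{\GammaN}{^e_j_l} - \tensor{\GammaN}{^k_j_e}\tensor{\GammaN}{^e_0_l}$ give $r^{-1}\delta^k_m\tensor{\varGamma}{^m_j_l} - \tensor{\varGamma}{^k_j_m}r^{-1}\delta^m_l = r^{-1}\tensor{\varGamma}{^k_j_l} - r^{-1}\tensor{\varGamma}{^k_j_l}=0$ (the $e=0$ contributions vanish since $\tensor{\GammaN}{^k_0_0}=0=\tensor{\GammaN}{^0_j_l}\cdot(\text{wait, check})$—more precisely $\tensor{\GammaN}{^k_0_0}=0$ and $\tensor{\GammaN}{^0_0_l}=0$). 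Hence every component is zero.

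The bookkeeping obstacle—and the only thing that is not completely mechanical—is being careful about which Christoffel symbols are nonzero and matching indices correctly in the contracted sums, particularly keeping straight that $\tensor{\GammaN}{^0_i_j}=-r\sigma_{ij}$ carries a lower-index metric while $\tensor{\GammaN}{^k_0_j}=r^{-1}\delta^k_j$ does not, so that the cross terms combine to exactly cancel the derivative terms. It is worth remarking that this vanishing is a general feature of the metric \eqref{metric}: since $\de\rho^2\oplus\rho^2\sigma$ is (locally) the cone metric over $(M^n,\sigma)$, the radial direction $\partial_r=\partial_\rho$ is a geodesic direction and, being the gradient of a distance function with the level sets umbilic, contributes trivially to curvature—an alternative proof could be given by invoking the standard warped-product curvature formulas (e.g. from \cite{petersen}), but the direct coordinate computation above is self-contained and short, so that is the route I would take.
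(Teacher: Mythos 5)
Your proposal is correct and follows essentially the same route as the paper: a direct computation of the mixed components from the Christoffel symbols \eqref{christoffel} via the coordinate formula \eqref{curvature gamma}, followed by lowering an index and invoking the symmetries of $\RN$ to cover the remaining cases (you simply carry out the cancellation details that the paper ``spares''). The only thing to fix is editorial: remove the leftover ``wait, check'' aside, since the corrected statement that $\tensor{\GammaN}{^k_0_0}=0$ and $\tensor{\GammaN}{^0_0_l}=0$ is all that is needed there.
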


\begin{proof}
It is easily derived from \eqref{curvature index notation} that the curvature tensor in terms of $\tensor{\GammaN}{^c_a_b}$ (see \cite[\S 3]{wald}) can be expressed as
\begin{equation}\label{curvature gamma}
	\tensor{\RN}{_a_b_c^d} = \partial_b \tensor{\GammaN}{^d_a_c} - \partial_a \tensor{\GammaN}{^d_b_c} + \tensor{\GammaN}{^d_b_e}\tensor{\GammaN}{^e_a_c} - \tensor{\GammaN}{^d_a_e}\tensor{\GammaN}{^e_b_c}.
\end{equation}
Combining \eqref{christoffel}  and \eqref{curvature gamma} we deduce (sparing the details)
\begin{equation}
		\tensor{\RN}{_0_j_k^l} = \tensor{\RN}{_0_j_k^0} =0.
\end{equation}
Lowering the final index with $\gamma_{ab}$ shows
\[
\tensor{\RN}{_0_i_j_a} = \gamma_{ab} \tensor{\RN}{_0_i_j^b} = 0.
\]
The symmetries of $\RN$ lead to the result.
\end{proof}
\begin{lem}
	The embedding $M^n \xhookrightarrow{r}  \{r\}\times M^n$ has constant mean curvature of $r^{-1}n.$
\end{lem}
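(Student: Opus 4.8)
The plan is to obtain the mean curvature of the level set $\lbrace r \rbrace \times M^n$ by directly contracting its second fundamental form with the inverse of its induced metric, both of which have already been recorded in this section. By \eqref{metric alpha} the induced metric is $\alpha_{ij} = r^2\sigma_{ij}$, and by \eqref{sff beta} the second fundamental form is $\beta_{ij} = r\sigma_{ij}$.

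First I would note that, since $r$ is constant along the level set $\lbrace r \rbrace \times M^n$, the inverse of the induced metric is simply $\alpha^{ij} = r^{-2}\sigma^{ij}$. The mean curvature is the $\alpha$-trace of $\beta$, so
\[
	H = \alpha^{ij}\beta_{ij} = r^{-2}\sigma^{ij}\cdot r\sigma_{ij} = r^{-1}\sigma^{ij}\sigma_{ij} = r^{-1}\delta^i_i = r^{-1}n,
\]
which is constant on $\lbrace r \rbrace \times M^n$, as asserted. (Equivalently, $\beta^i_j = \alpha^{ik}\beta_{kj} = r^{-1}\delta^i_j$, so all principal curvatures equal $r^{-1}$ and their sum is $r^{-1}n$.)

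The only point that deserves a moment's care — and the closest thing to an obstacle here — is checking that the sign conventions are consistent, i.e. that $\beta_{ij}$ in \eqref{sff beta} really is the second fundamental form with respect to the outward normal $\nu = \partial/\partial r$ used in the Gauss formula \eqref{gauss formula}, rather than its negative. This is verified by combining the Christoffel symbols \eqref{christoffel} — in particular $\tensor{\GammaN}{^0_i_j} = -r\sigma_{ij}$ — with the observation that the Levi-Civita connection of $\alpha = r^2\sigma$ for constant $r$ coincides with that of $\sigma$: feeding these into $x_{ij} = \nablaN_{x_i} x_j - \nabla_{x_i} x_j$ gives $x_{ij} = -r\sigma_{ij}\nu$, so that $h_{ij} = \beta_{ij} = r\sigma_{ij}$ with the correct sign, which then justifies the contraction above. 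Everything else is the one-line computation displayed.
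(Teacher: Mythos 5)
Your proof is correct and follows essentially the same route as the paper: contracting $\beta_{ij}=r\sigma_{ij}$ with $\alpha^{ij}=r^{-2}\sigma^{ij}$ to obtain $H=r^{-1}n$. The extra check of the sign convention via \eqref{christoffel} is a sensible addition but does not change the argument.
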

\begin{proof}
	Contracting the second fundamental form $\beta=r\sigma$ with the inverse induced metric $\alpha^{-1} = r^{-2}\sigma^{-1}$ gives \[ H = \alpha^{ij} \beta_{ij} = r^{-1} \sigma^{ij} \sigma_{ij} = r^{-1}n.\]
\end{proof}
\begin{prop}
	\begin{equation}
		\Ric_M \geq 0 \iff \RicN \geq (1-n) \sigma
	\end{equation}
\end{prop}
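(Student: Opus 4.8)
The plan is to read off the Ricci curvature of $N=\rel_+\times_{\Id}M^n$ from the fundamental equations of hypersurface geometry applied to the level sets $\{r\}\times M^n$, for which all the relevant data has already been recorded: the induced metric $\alpha_{ij}=r^2\sigma_{ij}$ of \eqref{metric alpha}, the second fundamental form $\beta_{ij}=r\sigma_{ij}$ of \eqref{sff beta}, the mean curvature $H=r^{-1}n$, and the outward unit normal $\nu=e_0=\partial_r$, which is $\gamma$-unit since $\gamma_{00}=1$. Throughout, $\sigma$ is to be understood as a symmetric $2$-tensor on $N$ via pullback along the projection $N\to M^n$, so that $\sigma(e_0,\,\cdot\,)=0$ and $\sigma(e_i,e_j)=\sigma_{ij}$; equivalently, $r^2\sigma$ is the restriction of $\gamma$ to the slice distribution and $\sigma$ annihilates $\partial_r$. (This is the only reading under which the asserted equivalence is $r$-independent.)

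First I would note that the slice metric $\alpha=r^2\sigma$ is a constant rescaling of $\sigma$, so the two have the same Ricci tensor as symmetric $2$-tensors: ${}^{\alpha}R_{ik}=(\Ric_M)_{ik}$. I would then apply Corollary~\ref{ricci gauss} to the embedding $M^n\xhookrightarrow{r}\{r\}\times M^n$ with $h=\beta$:
\[
	{}^{\alpha}R_{ik}=\RicN(e_i,e_k)-\RN(\nu,e_i,\nu,e_k)+H\beta_{ik}-\beta_{im}\beta^m_k .
\]
Here $\RN(\nu,e_i,\nu,e_k)=0$ by \eqref{curvature}; $H\beta_{ik}=(r^{-1}n)(r\sigma_{ik})=n\sigma_{ik}$; and $\beta^m_k=\alpha^{ml}\beta_{lk}=r^{-1}\delta^m_k$, so $\beta_{im}\beta^m_k=\sigma_{ik}$. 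Substituting and using ${}^{\alpha}R_{ik}=(\Ric_M)_{ik}$,
\[
	\RicN(e_i,e_k)=(\Ric_M)_{ik}-(n-1)\sigma_{ik},\qquad\text{equivalently}\qquad\RicN(e_i,e_k)-(1-n)\sigma_{ik}=(\Ric_M)_{ik}.
\]

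Finally I would observe that, since $\RN(e_0,\,\cdot\,,\,\cdot\,,\,\cdot\,)\equiv 0$ by \eqref{curvature}, tracing gives $\RicN(e_0,\,\cdot\,)\equiv 0$; hence for an arbitrary $Z=Z^0e_0+Z^ie_i\in T_{(r,p)}N$ one has $\RicN(Z,Z)=Z^iZ^k\RicN(e_i,e_k)$ and $\sigma(Z,Z)=Z^iZ^k\sigma_{ik}$, so that
\[
	\RicN(Z,Z)-(1-n)\sigma(Z,Z)=Z^iZ^k(\Ric_M)_{ik}=\Ric_M(V,V),\qquad V:=Z^ie_i\in T_pM .
\]
Since $V$ ranges over all of $T_pM$ as $Z$ ranges over $T_{(r,p)}N$ (take $Z^0=0$), the left side is nonnegative for every $Z$ precisely when $\Ric_M\geq 0$; reading the same identity backwards gives the reverse implication, which is the claimed equivalence.

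There is no genuine analytic difficulty here---the argument is entirely algebraic---so I would expect the only friction to be bookkeeping: fixing the convention making $\sigma$ a $2$-tensor on $N$, invoking scale-invariance of the $2$-tensor Ricci curvature of the slices, and keeping the sign of $\beta$ consistent with the Gauss formula $x_{ij}=-h_{ij}\nu$ behind Corollary~\ref{ricci gauss} (though, as every surviving term in the displayed identity is quadratic in $h$ or in $\nu$, the outcome is in fact insensitive to the orientation of $\nu$). A more computational alternative would be to contract the ambient curvature obtained directly from \eqref{christoffel} through \eqref{curvature gamma}, but routing through Corollary~\ref{ricci gauss} spares recomputing $\RN$.
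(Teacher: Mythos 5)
Your proposal is correct and follows essentially the same route as the paper: apply Corollary \ref{ricci gauss} to the slices $\{r\}\times M^n$ with $h=\beta$, $H=r^{-1}n$, use $\RN(\nu,\cdot,\nu,\cdot)=0$ from \eqref{curvature}, and identify the slice Ricci with $\Ric_M$ to arrive at $\Ric_M=\RicN+(n-1)\sigma$. The only differences are minor: you obtain the slice identity by scale invariance of the $(0,2)$ Ricci tensor rather than the paper's Christoffel computation \eqref{riemann level set}, and you add the (welcome, paper-omitted) observation that $\RicN(e_0,\cdot)=0$, which makes the stated inequality equivalence precise on all of $TN$.
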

\begin{proof}
	Once again using \eqref{christoffel} and \eqref{curvature gamma}, we deduce
\begin{equation}\label{riemann level set}
	{^{\alpha}\tensor{R}{_i_j_k^l}} = {^{\sigma}\tensor{R}{_i_j_k^l}},
\end{equation}
where ${^{\alpha}\tensor{R}{_i_j_k^l}}$ is the Riemann curvature tensor with respect to the metric $\alpha_{ij}$ from \eqref{metric alpha} on the level set $\{ r \} \times M^n$. Taking the trace over $j$ and $l$ shows
\[
	\Ric_{\{r\}\times M} = \Ric_M.
\]
Using \eqref{metric alpha} and \eqref{sff beta} 
\[
	\beta^i_j = \alpha^{im}\beta_{mj} = r^{-1}\delta^i_j.
\]
Now apply Corollary \ref{ricci gauss}, substituting $\beta$ for $h$ and taking note of \eqref{curvature}, to reveal
\begin{equation} \label{ric m}
	\Ric_M = \RicN + (n-1)\sigma.
\end{equation}
\begin{remark}
	If we contract \eqref{riemann level set} with $\alpha_{lm}$, we get
	\begin{equation}
		{^{\alpha}\tensor{R}{_i_j_k_m}} = \alpha_{lm}\left[{^{\sigma}\tensor{R}{_i_j_k^l}}\right] = r^2 \sigma_{lm}\left[ {^{\sigma}\tensor{R}{_i_j_k^l}}\right] = r^2 \left[ {^{\sigma}\tensor{R}{_i_j_k_m}} \right].
	\end{equation}
This equality shows that as the embedded hypersurface varies in the radial direction with equal magnitude at every point (i.e. rescales), the curvature of the rescaled hypersurface is inversely proportional to the radial distance. Intuitively this can be seen if we consider a family of expanding 2-spheres in $\rel^{3}$.
\end{remark}
\end{proof}
\begin{example}[polar coordinates]\label{polar coords}
	The archetypal example of a warped product manifold is $\rel^{n+1} \setminus \{0\}$ equipped with the canonical metric $\delta^i_j$ expressed as
	\begin{equation*}
		\rel_+ \times_{\Id} S^n
	\end{equation*}
	with the metric as in \eqref{wp metric} where $\sigma$ is now a metric on $S^n$. This corresponds simply to expressing elements of $\rel^{n+1}\setminus \{{\bf 0}\}$ in polar coordinates. As aforementioned, this is how Gerhardt and Urbas tackled the issue of the outward flow of an embedded surface in Euclidean space in \cite{ger1} and \cite{urbas} respectively.
\end{example}


\subsection{hypersurfaces as graphs of functions}
The theory of general hypersurfaces (see eg. \cite[\S 6]{do carmo}) can be easily applied to the special case in which the ambient space is the warped product $N=\rel_+ \times_{\Id} M^n$ from \S \ref{wp geometry}, where the metric $\gamma$ of $N^{n+1}$ in terms of the radial function $r$ and the metric $\sigma$ of $M^n$, is expressed as
\[
	\gamma = \de\! r^2 \oplus r^2 \sigma
\] and the embedding $x$ can be seen as the graph of a function $u:M^n\rightarrow \rel_+$, however, not all submanifolds of $N^{n+1}$ can be expressed as such.\\
\\
In this respect we obtain the induced metric $g$:
\begin{equation} \label{g def}
	g_{ij}(p) \equiv g_p(e_i, e_j) \equiv (x^{*}\gamma)_p(e_i, e_j) \equiv \gamma_{x(p)=(u(p),p)}(x_{*}(e_i), x_{*}(e_j)) 
\end{equation} 
where 
\begin{equation}\label{xi}
	x_{*}(e_i) = \de\!x(e_i) = u_ie_0 + e_i \equiv x_i,
\end{equation}
and $e_i = \partial / \partial y^i$. Thus, we find
\[
	g_{ij} = \gamma(x_i, x_j) = u_iu_j + u^2 \sigma_{ij} = u^2(\sigma_{ij} + \varphi_i \varphi_j),
\]
where $\varphi \equiv \log u$ throughout. Since $u$ takes solely positive values, $\varphi$ is always well defined. The basis $\{ e_i = \partial / \partial y^i \}_{i=1,...,n}$ of $TM$ gives rise to a basis of $Tx(M)$ via the differential (or {\em push forward}, see previous section). These are indeed the vectors $\{ x_i \}_{i=1,...,n}$ of \eqref{xi}. \\
\\
The outward pointing unit normal vector to $x(M)$, which necessarily satisfies $\gamma(\nu, x_i) = 0$ for all $i=1,\dots,n$, is consequently given by 
\[
	\nu = (\nu^a) = \gamma^{ab} \nu_{b}
\] where the covector $\nu_{a} $ has the components $(1, -Du)$ and the normalisation factor $v^{-1}$ where 
\begin{equation}\label{v}
	v^2 \equiv 1+u^{-2}|Du|^2_{\sigma}= 1+|D\varphi|^2_{\sigma}.
\end{equation}
Thus, 
\begin{equation} \label{norm}
	\nu = (\nu^{a}) = v^{-1}(e_0 - u^{-2} u^k e_k ).
\end{equation}
It is readily checked that this vector is indeed orthogonal to each $x_i$ with respect to $\gamma$. The inverse of the induced metric is 
\[
	g^{ij} = u^{-2}(\sigma^{ij} - \varphi^i \varphi^j / v^2),
\] where $\varphi^i \equiv \sigma^{ij}\varphi_j = \sigma^{ij}D_j\varphi$ and with a quick calculation, is easily seen to be the inverse of $g_{ij}$.
\begin{prop}\label{conn g conn sigma}
	The connection of $g$ is related to the connection of $\sigma$ by
	\begin{equation} \label{conn g conn sigma eq}
	\begin{split}
		\left( \mathop{\nabla}\limits_X - \mathop{D}\limits_X \right) Y = & \sigma(D\varphi,X)Y + \sigma(D\varphi, Y)X \\
							& + \left( D^2\varphi(X,Y) - [uv]^{-2}g(X,Y) \right) D\varphi,
	\end{split}
	\end{equation} 
for $X,Y \in TM$. Equivalently
	\begin{equation}
		{^g\tensor{\varGamma}{^k_i_j}} = {^{\sigma}\tensor{\varGamma}{^k_i_j}}+ \varphi_i \delta^k_j + \varphi_j \delta^k_i - u^{-2}v^{-2}\varphi^kg_{ij}  + v^{-2}\varphi^k D_{ij}\varphi.
	\end{equation}  
\end{prop}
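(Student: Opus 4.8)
The plan is to exploit the fact that the difference of two torsion-free connections on the same manifold is a $\binom{1}{2}$-tensor, and to pin that tensor down by a Koszul-type identity. Write $S(X,Y)\equiv \nabla_X Y - D_X Y$; since both $\nabla=\nabla(g)$ and $D=D(\sigma)$ are torsion-free, $S$ is tensorial and symmetric in $X,Y$, so it suffices to compute $S^k_{ij}\equiv {}^g\tensor{\varGamma}{^k_i_j}-{}^{\sigma}\tensor{\varGamma}{^k_i_j}$ in the coordinate frame $\{e_i\}$. Because $\nabla g=0$ while $D\sigma=0$, the standard cyclic-permutation argument gives
\[
	2g_{kl}S^l_{ij} = (D_{i}g)_{jk} + (D_{j}g)_{ik} - (D_{k}g)_{ij}.
\]

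First I would substitute $g_{ij}=u^2\sigma_{ij}+u_iu_j=u^2(\sigma_{ij}+\varphi_i\varphi_j)$ and compute $(D_kg)_{ij}=2uu_k\sigma_{ij}+u_{ki}u_j+u_iu_{kj}$, where $u_{ki}=D_kD_iu$ is the $\sigma$-Hessian. Since $D\sigma=0$ and the Hessian is symmetric, the antisymmetric-looking terms cancel in the combination above, leaving $g_{kl}S^l_{ij}=u(u_i\sigma_{jk}+u_j\sigma_{ik}-u_k\sigma_{ij})+u_{ij}u_k$. Passing to $\varphi=\log u$ via $u_i=u\varphi_i$ and $u_{ij}=u(\varphi_{ij}+\varphi_i\varphi_j)$, with $\varphi_{ij}=D_{ij}\varphi$, yields
\[
	g_{kl}S^l_{ij}=u^2\bigl[\varphi_i\sigma_{jk}+\varphi_j\sigma_{ik}-\varphi_k\sigma_{ij}+\varphi_k\varphi_{ij}+\varphi_k\varphi_i\varphi_j\bigr].
\]

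Next I would raise the index with the inverse metric $g^{kl}=u^{-2}(\sigma^{kl}-\varphi^k\varphi^l/v^2)$ recorded just above the statement. The $u^{\pm2}$ cancel, and the contraction splits into a $\sigma^{kl}$-part — which produces $\varphi_i\delta^k_j+\varphi_j\delta^k_i$ together with some $\varphi^k$-multiples — and a $-\varphi^k\varphi^l/v^2$-part. Collecting the coefficients of $\varphi^k$ and repeatedly invoking $v^2=1+|D\varphi|^2_\sigma$ (in particular $1-|D\varphi|^2_\sigma/v^2=v^{-2}$) collapses everything to
\[
	S^k_{ij}=\varphi_i\delta^k_j+\varphi_j\delta^k_i-u^{-2}v^{-2}\varphi^k g_{ij}+v^{-2}\varphi^k D_{ij}\varphi,
\]
which is the index form of the claim; contracting with $X^iY^j$ and recognising $\varphi_iX^i=\sigma(D\varphi,X)$, $D_{ij}\varphi\,X^iY^j=D^2\varphi(X,Y)$, $g_{ij}X^iY^j=g(X,Y)$, and $\varphi^k=(D\varphi)^k$ gives the coordinate-free identity \eqref{conn g conn sigma eq}. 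An equivalent route is to write $g=u^2\bar g$ with $\bar g=\sigma+\de\varphi\otimes\de\varphi$, apply the standard conformal-change formula for Christoffel symbols to the factor $u^2=\e^{2\varphi}$, and separately compute the connection of the rank-one perturbation $\bar g$ of $\sigma$ (whose difference tensor is just $v^{-2}\varphi^k\varphi_{ij}$); this isolates the one step that is not pure bookkeeping, namely inverting $\bar g$ and simplifying with $v^2$. In either approach there is no genuine obstacle — the content is elementary — so the only place I would slow down is the inverse-metric contraction and the tracking of the $v^{-2}$ factors, where sign and cancellation errors are easy to make.
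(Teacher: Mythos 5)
Your proposal is correct, and your index computation lands exactly on the paper's formula ${}^{g}\varGamma^k_{ij}-{}^{\sigma}\varGamma^k_{ij}=\varphi_i\delta^k_j+\varphi_j\delta^k_i-u^{-2}v^{-2}\varphi^k g_{ij}+v^{-2}\varphi^k D_{ij}\varphi$. The paper's own proof is essentially the uncovariantized version of yours: it inserts $g_{ij}=u^2(\sigma_{ij}+\varphi_i\varphi_j)$ into the raw Christoffel formula $\tfrac12 g^{kl}(g_{lj,i}+g_{il,j}-g_{ij,l})$ with partial derivatives, drags the ${}^{\sigma}\varGamma$ terms along, and only at the end reassembles them into the covariant Hessian via $D_{ij}\varphi=\varphi_{j,i}-{}^{\sigma}\varGamma^m_{ij}\varphi_m$, closing with the same identity $1-|D\varphi|^2_{\sigma}v^{-2}=v^{-2}$. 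Your route through the difference tensor $S^k_{ij}$ and the Koszul-type identity $2g_{kl}S^l_{ij}=(D_ig)_{jk}+(D_jg)_{ik}-(D_kg)_{ij}$ (valid because both connections are torsion-free and $\nabla g=0$) keeps every intermediate quantity tensorial, so no Christoffel symbols of $\sigma$ ever appear and the symmetric cancellations are immediate; it buys cleaner bookkeeping at no extra cost. Your alternative decomposition $g=\e^{2\varphi}\bar g$ with $\bar g=\sigma+\de\varphi\otimes\de\varphi$ is also correct and is a genuinely different splitting (conformal change plus rank-one perturbation, whose difference tensor is indeed $v^{-2}\varphi^k\varphi_{ij}$, and the conformal term contributes $\varphi_i\delta^k_j+\varphi_j\delta^k_i-u^{-2}v^{-2}\varphi^k g_{ij}$ after raising the index with $\bar g$).

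One small caveat: contracting your index formula with $X^iY^j$ gives the Hessian term with the factor $v^{-2}$, i.e.\ $v^{-2}D^2\varphi(X,Y)\,D\varphi$, which matches the second display of the proposition but not the coordinate-free display \eqref{conn g conn sigma eq} as printed, where the $v^{-2}$ on the Hessian term is absent. That discrepancy sits in the paper's statement rather than in your argument, but you should not assert that the contraction literally reproduces \eqref{conn g conn sigma eq} without flagging it.
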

\begin{proof}
	Using the formula \eqref{christoffel define}, we see
	\begin{equation}
		\begin{split}
			{^g\tensor{\varGamma}{^k_i_j}}  = & \frac{1}{2}g^{kl} (g_{lj,i} + g_{il,j} - g_{ij,l}) \\
			 = & \frac{1}{2}g^{kl}\big\lbrace 2u^{-1}u_ig_{lj} + u^2\sigma_{lj,i} + u^2(\varphi_l \varphi_j)_{,i} \\
			&\hspace{35pt} + 2u^{-1}u_jg_{il} + u^2\sigma_{il,j} + u^2(\varphi_i \varphi_l)_{,j}\\
			&\hspace{35pt} - 2u^{-1}u_lg_{ij} - u^2\sigma_{ij,l} - u^2(\varphi_i \varphi_j)_{,l} \big\rbrace\\
			= & \varphi_i \delta^k_j + \varphi_j \delta^k_i - \varphi_l g^{kl}g_{ij} + {^{\sigma}\tensor{\varGamma}{^k_i_j}} \\
			& - \varphi^k \varphi^l v^{-2}\sigma_{ml} {^{\sigma}\tensor{\varGamma}{^m_i_j}} - \varphi^k \varphi_{j,i} - v^{-2}|D\varphi|^2_{\sigma} \varphi^k\varphi_{j,i}.
		\end{split}
	\end{equation}
Using the facts \[1-|D\varphi|^2 v^{-2} = v^{-2}\] and \[D_{ij} \varphi = \varphi_{j,i} - {^{\sigma}\tensor{\varGamma}{^m_i_j}}\varphi_m\] leads to the sought after result.
\end{proof}
\begin{remark}
	Notice how the right hand side of \eqref{conn g conn sigma eq} is tensorial in both $X$ and $Y$ when contracted with a covector. This is consistent with the difference of two torsion free connections being a tensor of valence $\left[ ^1_2 \right]$ (in stark contrast to the expression $\omega(\mathop{\nabla}\limits_X Y)$, for covector $\omega$, not being tensorial in $Y$ for an arbitrary connection).
\end{remark}
\begin{lem}
The second fundamental form, $A \equiv\{h_{ij} \}$, of the embedding $x$ is given as
\begin{equation}\label{sff eq}
	h_{ij} = uv^{-1}(\sigma_{ij}+\varphi_i \varphi_j - \varphi_{ij}),
\end{equation} and the \emph{Weingarten} map is given as
\[h^i_j = \left[ uv \right]^{-1}\left( \delta^i_j + ( -\sigma^{ik} + \varphi^i \varphi^k v^{-2} ) \varphi _{kj}\right).\] 
\end{lem}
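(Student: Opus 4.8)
The plan is to read off $h_{ij}$ from the Gauss formula \eqref{gauss formula}, $x_{ij}=-h_{ij}\nu$, by pairing both sides with the unit normal: since $\gamma(\nu,\nu)=1$ this gives $h_{ij}=-\nu_a\,x^a_{;ij}$, where $\nu_a$ are the components of the normal covector (by \eqref{norm} these are $v^{-1}(1,-Du)$ once the index is lowered with $\gamma$ restricted to the graph, where $\gamma_{00}=1$, $\gamma_{0k}=0$ and $\gamma_{kl}=u^2\sigma_{kl}$), and $x^a_{;ij}=x^a_{,ij}-{^{g}\tensor{\varGamma}{^k_i_j}}x^a_{,k}+\tensor{\GammaN}{^a_b_c}x^b_{,i}x^c_{,j}$ is the full second covariant derivative of the embedding. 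The key simplification is that the intrinsic Christoffel term is annihilated by the contraction, because $\nu_a x^a_{,k}=\gamma(\nu,x_k)=0$; in particular, and pleasantly, Proposition \ref{conn g conn sigma} is not needed for this computation.

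First I would record from \eqref{xi} that $x^0_i=u_i$ and $x^k_i=\delta^k_i$, so the only surviving partial second derivative is $x^0_{,ij}=u_{,ij}$ and $\nu_a x^a_{,ij}=v^{-1}u_{,ij}$. Then I would substitute the ambient Christoffel symbols from \eqref{christoffel}, keeping in mind that along the graph the radial coordinate $r$ is to be evaluated at $u$ (the symbols enter the Hessian algebraically, so they are never differentiated). Contracting $\tensor{\GammaN}{^a_b_c}x^b_{,i}x^c_{,j}$ with $\nu_a$ and collecting terms — using $u_i=u\varphi_i$ throughout — yields $h_{ij}=v^{-1}\big(u\sigma_{ij}+2u\varphi_i\varphi_j-D_{ij}u\big)$, where $D_{ij}u=u_{,ij}-{^{\sigma}\tensor{\varGamma}{^m_i_j}}u_m$ is the $\sigma$-Hessian of $u$. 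The remaining step is the translation $\varphi=\log u$, which gives $D_{ij}u=u\big(D_{ij}\varphi+\varphi_i\varphi_j\big)$ and hence the claimed $h_{ij}=uv^{-1}(\sigma_{ij}+\varphi_i\varphi_j-\varphi_{ij})$.

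For the Weingarten map I would simply raise an index with the inverse induced metric $g^{ij}=u^{-2}(\sigma^{ij}-\varphi^i\varphi^j v^{-2})$ recorded just above Proposition \ref{conn g conn sigma}, expand $h^i_j=g^{ik}h_{kj}$, and simplify the coefficient of $\varphi^i\varphi_j$ using $|D\varphi|^2_{\sigma}=v^2-1$ from \eqref{v}. After cancellation the $\varphi^i\varphi_j$ contributions vanish identically and one is left with $h^i_j=[uv]^{-1}\big(\delta^i_j+(-\sigma^{ik}+\varphi^i\varphi^k v^{-2})\varphi_{kj}\big)$.

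The whole argument is essentially bookkeeping, so I do not anticipate a genuine obstacle; the three places that demand attention are the sign convention in \eqref{gauss formula}, the evaluation of the ambient Christoffel symbols at $r=u$ on the graph (as opposed to treating $r$ as an independent variable), and keeping the passage between the Hessians of $u$ and of $\varphi$ consistent. As an independent check, one can instead start from the Weingarten equation \eqref{weingarten eq}, which gives $h_{ij}=\gamma(\nablaN_{x_i}\nu,x_j)$, and differentiate \eqref{norm} directly; this reaches the same formulae but at the price of differentiating $v$ and $u^{-2}$, which is why I would prefer the Gauss-formula route above.
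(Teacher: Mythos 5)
Your proposal is correct and follows essentially the same route as the paper: contract the Gauss formula with $\nu$, note the intrinsic Christoffel term dies by $\gamma(\nu,x_k)=0$, evaluate $\tensor{\GammaN}{^a_b_c}x^b_ix^c_j$ via \eqref{christoffel} at $r=u$, convert the $\sigma$-Hessian of $u$ into that of $\varphi=\log u$, and then obtain the Weingarten map by raising an index with $g^{ik}$ and cancelling the $\varphi^i\varphi_j$ terms via $|D\varphi|^2_\sigma=v^2-1$. Your intermediate expression $h_{ij}=v^{-1}(u\sigma_{ij}+2u\varphi_i\varphi_j-D_{ij}u)$ matches the paper's computation, so there is nothing to add.
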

\begin{proof}
Since $x_i$ is covariant with respect to $g$ as well as contravariant with respect to $\gamma$ (see proof of Lemma \ref{gauss eq lem}), in the $\{ e_a \}_{a=0,...,n}$ basis we have, 
\[
	x^a_{ij} = \partial^2_{ij}x^{a} + \tensor{\GammaN}{^a_b_d} x_i^{b} x_j^{d} - {^g\tensor{\varGamma}{^k_i_j}}x^{a}_k.
\] 
and plugging this into the Gauss formula \eqref{gauss formula}
\[
	h_{ij} = - \gamma((\partial^2_{ij}x^{a} + \tensor{\GammaN}{^a_b_d} x_i^{b} x_j^{d} - {^g\tensor{\varGamma}{^k_i_j}}x^{a}_k)e_{a}, \nu^{b}e_{b} ).
\]
The partial (not covariant) derivative $x_{,ij} = u_{,ij} e_0$, so the first term is \[ \gamma( x_{,ij}, \nu ) = v^{-1}u_{,ij}.\] The final term disappears due to orthogonality so we look at the middle term. We notice that $\gamma( e_k, \nu )= -v^{-1} u_k$ and
$\gamma( e_0, \nu ) = v^{-1}.$ Thus
\begin{equation}
	h_{ij} =-v^{-1}( u_{,ij} +  \tensor{\GammaN}{^0_b_d}x_i^{b} x_j^{d} - \tensor{\GammaN}{^k_b_d}x_i^{b} x_j^{d} u_k). 
\end{equation}
With a little computation using \eqref{christoffel} and \eqref{xi}, 
\begin{equation}\label{christoffel graph}
	\tensor{\GammaN}{^c_a_b}x_i^a x_j^b = \left\{
  		\begin{array}{l l}
    			{^{\sigma}\tensor{\varGamma}{^k_i_j}}	& \quad \text{if $(a,b,c)=(i,j,k)$}\\
    			-u\sigma_{ij}					& \quad \text{if $c=0, (a,b) =(i,j)$ }\\
			r^{-1} u_i u_j 				& \quad \text{if $a=0, (b,c) =(j,k) \vee b=0, (a,c) = (j,k)$ }\\
			0 						& \quad \text{otherwise}
  		\end{array} \right.
\end{equation}
Putting it all together we have
\begin{align*} 
	h_{ij} & = -v^{-1}\partial^2_{ij} u - v^{-1}(-u\sigma_{ij}) + v^{-1}({^{\sigma}\tensor{\varGamma}{^k_i_j}}u_k + 2u^{-1}u_iu_j)\\
		& = uv^{-1}(\sigma_{ij} + \varphi_i \varphi_j - \varphi_{ij}),
\end{align*}
where we have used the fact that $u_{ij} = \partial^2_{ij} u - {^{\sigma}\tensor{\varGamma}{^k_i_j}}u_k$ (and seeing as this is a torsion free connection, $u_{ij} = u_{ji}$).\\
\\
For the second assertion, simply compute $h^i_j = g^{ik} h_{kj}$.
\end{proof}
\begin{cor}\label{smc} The mean curvature of the embedding $x:p\mapsto (u(p),p)$ can be expressed as follows
	\begin{equation} 
		\begin{split}
		H & = [uv]^{-1} \left( n- \Delta_{\sigma} \varphi + D^2 \varphi( D\varphi, D\varphi )v^{-2}  \right) \\
			&= [uv]^{-1} \left( n- \sigma^{ij}\varphi_{ij}+ \varphi^i \varphi^j\varphi_{ij} v^{-2} )  \right)\\
			& =  [uv]^{-1} \left(  n - u^2 \Delta_g \varphi \right). 
		\end{split}
	\end{equation}
\end{cor}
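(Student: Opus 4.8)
The plan is to get all three displayed expressions by pure tensor algebra from the second fundamental form; no PDE theory or maximum principle is needed here. First I would use that the mean curvature is the trace of the Weingarten map, $H=h^i_i$, and substitute the formula for $h^i_j$ from the lemma immediately preceding the corollary. Setting $\delta^i_i=n$ gives
\[
H=[uv]^{-1}\bigl(n-\sigma^{ik}\varphi_{ki}+v^{-2}\varphi^i\varphi^k\varphi_{ki}\bigr),
\]
which is precisely the second displayed line. The first line is then only a restatement in coordinate-free notation: $\sigma^{ij}\varphi_{ij}$ is by definition $\Delta_\sigma\varphi$ and $\varphi^i\varphi^j\varphi_{ij}=D^2\varphi(D\varphi,D\varphi)$, where throughout $\{\varphi_{ij}\}=D^2\varphi$ is the Hessian of $\varphi=\log u$ with respect to the fixed metric $\sigma$.

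As an alternative (and a cross-check) one can instead contract the $\left[^0_2\right]$-tensor \eqref{sff eq} with the inverse induced metric $g^{ij}=u^{-2}(\sigma^{ij}-\varphi^i\varphi^j/v^2)$ recorded just above the corollary, i.e. compute $H=g^{ij}h_{ij}$ directly. Besides the term $n$ and the two Hessian terms, this expansion produces the combination $|D\varphi|^2_\sigma-v^{-2}|D\varphi|^2_\sigma-v^{-2}|D\varphi|^4_\sigma=|D\varphi|^2_\sigma\bigl(1-v^{-2}(1+|D\varphi|^2_\sigma)\bigr)$, which vanishes identically since $v^2=1+|D\varphi|^2_\sigma$ by \eqref{v}; what is left is again the formula above.

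For the third line I would simply read the inverse-metric identity in the form $u^2 g^{ij}=\sigma^{ij}-v^{-2}\varphi^i\varphi^j$ and factor it out of the bracket:
\[
n-\sigma^{ij}\varphi_{ij}+v^{-2}\varphi^i\varphi^j\varphi_{ij}=n-(\sigma^{ij}-v^{-2}\varphi^i\varphi^j)\varphi_{ij}=n-u^2 g^{ij}\varphi_{ij}=n-u^2\Delta_g\varphi,
\]
with $\Delta_g\varphi$ understood as the $g$-trace $g^{ij}D_{ij}\varphi$ of the same (fixed, $\sigma$-)Hessian. There is essentially no serious obstacle in this corollary; the only points that need a little care are the cancellation in the middle paragraph, which relies squarely on $v^2=1+|D\varphi|^2_\sigma$, and the bookkeeping that the Hessian appearing everywhere is the $\sigma$-Hessian, so that the last line is an assertion about $g^{ij}D_{ij}\varphi$ rather than the Laplace--Beltrami operator of the induced metric $g$.
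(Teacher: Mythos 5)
Your proposal is correct and is exactly the argument the paper intends: the corollary follows by tracing the Weingarten map $h^i_j$ from the preceding lemma (equivalently, contracting \eqref{sff eq} with $g^{ij}$, where your cancellation via $v^2=1+|D\varphi|^2_\sigma$ checks out), and the last line by the identity $u^2g^{ij}=\sigma^{ij}-v^{-2}\varphi^i\varphi^j$. Your closing caveat that $\Delta_g\varphi$ here means the $g$-trace $g^{ij}D_{ij}\varphi$ of the $\sigma$-Hessian, not the Laplace--Beltrami operator of $g$, is the reading consistent with the paper's later use of this formula (e.g.\ in computing $a^{ij}=v^{-2}u^2g^{ij}$).
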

\begin{remark}
Taking the $0^{\text{th}}$ component of \eqref{gauss formula} into consideration, achieves 
\begin{equation}
	\begin{split}
		-v^{-1}h_{ij} = & u_{,ij} - {^g\tensor{\varGamma}{^k_i_j}}u_k + \tensor{\GammaN}{^0_a_b}x_i^a x_j^b\\
				= & \nabla_j u_i - u\sigma_{ij}\\
				= & \nabla_j u_i - \beta_{ij},
	\end{split}
\end{equation}
where $\beta$ is the second fundamental form as in \eqref{sff beta} of the level set $\{u(p)\} \times M^n$ and our computations are being carried out at point $p\in M^n$. This can be verified with an elaborate calculation using Proposition \ref{conn g conn sigma}, the details of which, we spare the reader.
\end{remark}
\newpage
\section{inverse mean curvature flow for graphs} \label{imcf for graphs}
In \S \ref{posing the problem} we assume existence of a solution to \eqref{eq} on a maximal time interval $[0,T^{\star})$ and adapt this initial problem to the situation at hand to acquire a nonlinear {\em scalar} parabolic equation that is more manageable to work with. This fact is strengthened in \S \ref{first order estimates} where we prove existence of a solution for such an equation. We acquire two forms of this equation. One for the embedding function
\[
	u:I\times M^n \longrightarrow \rel_+
\]
and the other for the natural logarithm of $u,$
\[
	 \varphi \equiv \log u:I\times M^n \longrightarrow \rel.
\]
The advantage of the equation for $\varphi,$ as we shall soon be seeing, is that the corresponding nonlinear operator for fixed $t$ only depends on the first and second derivatives of $\varphi$ and not on the function itself. This will come in handy for the maximum and comparison principles in \S \ref{max and comp principles}.
\begin{remark}
	Since a solution of the equation for one of these functions naturally implies a solution for the equation of the other (by taking logarithm or exponentiating respectively), the equations for $u$ and $\varphi$ will be interchanged and handled with equality throughout. Note that $\varphi$ is well defined, since $u_0>0$ everywhere on $M^n$ and the a priori estimates of \S \ref{first order estimates} deliver us positive lower bounds on $u$ for $t>0$.
\end{remark}
\noindent In \S \ref{max and comp principles} as mentioned, we procure useful maximum and comparison principles for the acquired equations. These estimates are important for proving long time existence later on.\\
\\
We apply results of \cite{ger2, lady} in \S \ref{first order estimates} to prove existence of a solution, $\varphi$, to the adapted problem, then we go on to prove some first order estimates.\\
\\
To conclude the section, in \S \ref{evolution equations}, we study the so-called {\em evolution equations} of some quantities of interest on the evolving hypersurfaces. As we shall see, these are vital to the understanding of the behaviour of such solutions.\\
\\
We use the notation of \cite[\S 2.5]{ger2} throughout this section and for the duration of the paper. In particular, 
\[
	Q_{T^{\star}} \equiv [0,T^{\star}) \times M^n
\] denotes the {\em parabolic cylinder} of existence of the solution to \eqref{eq phi} and the class of function $H^{2+\beta, \frac{2+\beta}{2}}(\bar{Q}_{T^{\star}})$ to which (as we shall find out) the solution belongs is defined by
\begin{equation}\label{fcn space}
	\varphi \in H^{2+\beta, \frac{2+\beta}{2}}(\bar{Q}_{T^{\star}}) \iff \varphi(t,\cdot) \in C^{2,\beta}(M^n) \:  \wedge \: \varphi(\cdot,x) \in C^{1,\frac{\beta}{2}}([0,T^{\star})),
\end{equation} for $0<\beta<1$.
\subsection{posing the problem} \label{posing the problem}
The family of embeddings $x:Q_{T^{\star}} \equiv [0,T^{\star})\times M^n \rightarrow N^{n+1}$, of $M^n$ into the warped product $(N^{n+1} = \rel_+ \times_{\Id} M^n,\gamma)$ takes the form \[x:(t,p)\mapsto \left(u(t,p(t)),p(t)\right), \] so that in local coordinates $\lbrace y^i \rbrace$ on $M$ we can express the evolving hypersurface as 
\begin{equation} \label{graph hs}
	x(t,y^i(t)) = (u(t,y^i(t)), y^i(t)).
\end{equation}
We take a total time derivative of \eqref{graph hs} and substitute it into the original equation \eqref{eq} using \eqref{norm} to get 
\begin{equation} \label{radial x} 
	\dot{ x}^0 = \dt{ u} = \frac{\partial u}{\partial t} + u_i \dot{y}^i = H^{-1} \nu^0 = v^{-1} H^{-1}
\end{equation} 
for the radial component and 
\begin{equation} \label{tangential x}
	\dot{ x}^i = \dot{y}^i = -v^{-1}H^{-1}u^{-2}u^i
\end{equation}
for the components tangential to the level sets of the radial function, $\lbrace r = \text{const} \rbrace$. Plugging \eqref{tangential x} into \eqref{radial x} we infer
\begin{equation}
	\begin{split}
	\dot{u} & =  v^{-1}H^{-1}u^{-2} |Du|^2_{\sigma} +  v^{-1}H^{-1}\\
		  & = v^{-1}H^{-1}(1 + u^{-2}|Du|^2_{\sigma}) \\
		& = vH^{-1}
	\end{split}
\end{equation}
where the dot $\dot{}$ over $u$ has taken the place of $\partial / \partial t$. Multiplying both sides by $u^{-1}$ and using Corollary \ref{smc}, we come to the conclusion that in this setting, the original problem \eqref{eq} is equivalent to the nonlinear parabolic (of course this still needs to be checked) equation
\begin{equation}\label{eq phi}
\left\{
  	\begin{array}{l l}
    		\dot{\varphi} -  \left[ F(D\varphi, D^2 \varphi)\right]^{-1}  &= 0\\
    		\varphi_0  &= \varphi(0,\cdot)
  	\end{array} 
\right.
\end{equation}
with 
\begin{equation} \label{F}
	F \equiv v^{-2}(n - \sigma^{ij}\varphi_{ij}+ \varphi^i \varphi^j\varphi_{ij} v^{-2} ) = Huv^{-1}.
\end{equation}
\begin{remark}\label{F phi independent}
	Notice how $F = F(D\varphi, D^2 \varphi)$ is independent of $\varphi$ in an undifferentiated guise. This fact will come in useful in the near future.
\end{remark}

\begin{remark}
	The equation for the scalar quantity $u$ will also be of use in the future. It is given by
\begin{equation}\label{eq u}
\left\{ 
  \begin{array}{l l}
    \dot{u} -  H^{-1} v& =0 \\
    u_0 & =  u(0,\cdot),
  \end{array} \right.
\end{equation}
where $H = H(u,Du,D^2u)$ is expressed as
\[
	H = [uv]^{-1} \left( n- \left( u \Delta_g u - |Du|^2_g \right) \right).
\] The final term here, $|Du|^2_g$ is the norm, in terms of $g$, of the gradient of $u$.
\end{remark}


\subsection{maximum and comparison principles} \label{max and comp principles}
\begin{thm} \label{comp princ} Let $\varphi$ and $\tilde{\varphi}$ solve \eqref{eq phi} in $Q_{T^{\star}}$ with initial data $\varphi_0$ and $\tilde{\varphi}_0$ respectively. Then
	\begin{equation}	
		\inf_M (\varphi_0 - \tilde{\varphi}_0) \leq \varphi - \tilde{\varphi} \leq \sup_M (\varphi_0 - \tilde{\varphi}_0).
	\end{equation}
\end{thm}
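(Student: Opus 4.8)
The plan is to reduce the statement to the parabolic maximum principle on the closed manifold $M^n$ by exploiting the structural feature recorded in Remark~\ref{F phi independent}: the operator $F$, hence also $G \equiv F^{-1}$, depends on $\varphi$ only through $D\varphi$ and $D^2\varphi$, never on $\varphi$ in undifferentiated form. Consequently the difference $w \equiv \varphi - \tilde\varphi$ will satisfy a genuinely \emph{linear} parabolic equation with no zeroth-order term, and in particular constants will solve that equation, which is exactly what the comparison inequality requires.

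Concretely, rewriting \eqref{eq phi} as $\dot\varphi = G(D\varphi, D^2\varphi)$ and applying Hadamard's lemma (the fundamental theorem of calculus along the segment joining $(D\tilde\varphi, D^2\tilde\varphi)$ to $(D\varphi, D^2\varphi)$), I would obtain, at every $(t,p)\in Q_{T^{\star}}$,
\begin{equation*}
	\dot w = G(D\varphi, D^2\varphi) - G(D\tilde\varphi, D^2\tilde\varphi) = a^{ij}\, w_{ij} + b^i\, w_i ,
\end{equation*}
with
\begin{equation*}
	a^{ij}(t,p) \equiv \int_0^1 \frac{\partial G}{\partial r_{ij}}\big( D\tilde\varphi + s\,Dw,\ D^2\tilde\varphi + s\,D^2 w \big)\, \de s , \qquad
	b^i(t,p) \equiv \int_0^1 \frac{\partial G}{\partial p_i}(\cdots)\, \de s ,
\end{equation*}
where $r_{ij}$ and $p_i$ denote the Hessian and gradient slots of $G$. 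Since $\varphi$ and $\tilde\varphi$ are the given (fixed) solutions, $a^{ij}$ and $b^i$ are honest functions of $(t,p)$ alone; if one prefers coordinate derivatives, one uses $w_{ij} = \partial^2_{ij}w - {}^{\sigma}\tensor{\varGamma}{^k_i_j}w_k$ and absorbs the Christoffel contributions into $b^i w_i$.

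It then remains to check parabolicity, i.e.\ that $(a^{ij})$ is positive definite. From \eqref{F}, $\partial F / \partial \varphi_{ij} = v^{-2}(-\sigma^{ij} + v^{-2}\varphi^i \varphi^j)$, so for any covector $\xi$,
\begin{equation*}
	\xi_i \xi_j\, \frac{\partial F}{\partial \varphi_{ij}} = v^{-2}\big( -|\xi|^2_{\sigma} + v^{-2}(\varphi^i \xi_i)^2 \big) \le v^{-2}|\xi|^2_{\sigma} \big( -1 + v^{-2}|D\varphi|^2_{\sigma} \big) = -v^{-4}|\xi|^2_{\sigma} ,
\end{equation*}
using Cauchy--Schwarz and $v^2 = 1 + |D\varphi|^2_{\sigma}$ from \eqref{v}. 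Hence $\partial F / \partial \varphi_{ij}$ is negative definite; since $F = Huv^{-1} > 0$ (the flow \eqref{eq} forces $H > 0$), $\partial G / \partial r_{ij} = -F^{-2}\,\partial F / \partial r_{ij}$ is positive definite, and as the $2$-jets of $\varphi,\tilde\varphi$ range over a compact set on each slab $[0,T']\times M^n$ with $T' < T^{\star}$, the integral $a^{ij}$ is uniformly positive definite there.

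Finally, put $m \equiv \sup_M(\varphi_0 - \tilde\varphi_0)$. Then $w - m$ solves the same homogeneous linear equation $\partial_t(w - m) = a^{ij}(w-m)_{ij} + b^i(w-m)_i$ with $(w-m)|_{t=0} \le 0$, and the parabolic maximum principle on the closed manifold $M^n$ — applied on each $[0,T']$, with the customary $\varepsilon(1+t)$ perturbation to exclude an interior maximum, and then letting $T' \uparrow T^{\star}$ — gives $w - m \le 0$, i.e.\ $\varphi - \tilde\varphi \le \sup_M(\varphi_0 - \tilde\varphi_0)$; interchanging the roles of $\varphi$ and $\tilde\varphi$ yields the lower bound. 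I expect the only delicate point to be the bookkeeping around uniform parabolicity: one must confirm that $F > 0$ (equivalently $H > 0$) for the solutions in play and that their regularity, class $H^{2+\beta,\frac{2+\beta}{2}}$, suffices to make $a^{ij}, b^i$ bounded and $w$ an admissible competitor for the maximum principle on every compact time slab.
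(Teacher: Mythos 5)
Your argument is essentially sound, but it takes a genuinely different route from the paper. The paper does not linearize at all: it sets $w=(\varphi-\tilde\varphi)\e^{\lambda t}$, goes to the \emph{first} interior space-time maximum, uses that there $D\varphi=D\tilde\varphi$ and $D^2\varphi\le D^2\tilde\varphi$, and reads off directly from the structure of \eqref{F} (monotone non-increasing in the Hessian slot) that $F\ge\tilde F$, hence $\dot w\le\lambda w$ at that point; choosing $\lambda<0$ and letting $\lambda\nearrow 0$ gives the upper bound, and the lower bound follows symmetrically. That argument needs only the degenerate monotonicity of $F$ in $D^2\varphi$ at a single point and completely sidesteps the parabolicity/boundedness bookkeeping you rightly flag at the end. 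Your linearization, by contrast, produces the linear equation $\dot w=a^{ij}w_{ij}+b^iw_i$ with no zeroth-order term, which is reusable later (the paper performs exactly this kind of differentiated/linearized equation when estimating $\dot\varphi$ and $D\varphi$), and your verification that $-\partial F/\partial\varphi_{ij}=v^{-2}\bigl(\sigma^{ij}-v^{-2}\varphi^i\varphi^j\bigr)$ is positive definite is correct and matches the paper's Lemma on $a^{ij}$.

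The one step you should repair is the Hadamard integration applied to $G=F^{-1}$: your coefficients are integrals of $\partial G/\partial r_{ij}$ and $\partial G/\partial p_i$ along the segment of $2$-jets joining $(D\tilde\varphi,D^2\tilde\varphi)$ to $(D\varphi,D^2\varphi)$, and $G$ is singular wherever $F$ vanishes. Positivity of $F$ at the two endpoints (which does hold for the solutions in play, since $F=Huv^{-1}>0$) does not guarantee $F\neq 0$ at the intermediate jets, because $F$ is not affine along the segment (the $v^{-2}$ factors depend nonlinearly on the gradient slot), so as written $a^{ij}$ and $b^i$ need not be defined. The fix is routine: apply the fundamental theorem of calculus to $F$ itself, which is smooth on all of its domain, and write
\begin{equation*}
	\dot w \;=\; F^{-1}-\tilde F^{-1} \;=\; -\frac{1}{F\tilde F}\bigl(F-\tilde F\bigr)
	\;=\; \frac{1}{F\tilde F}\int_0^1\Bigl(-\frac{\partial F}{\partial r_{ij}}\Bigr)(\cdots)\,\de s\; w_{ij}
	\;+\; \frac{1}{F\tilde F}\int_0^1\Bigl(-\frac{\partial F}{\partial p_i}\Bigr)(\cdots)\,\de s\; w_i ,
\end{equation*}
whose second-order coefficient is positive definite because $-\partial F/\partial r_{ij}$ is positive definite at \emph{every} jet and $F\tilde F>0$. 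With that modification the rest of your argument (no zeroth-order term, constants as comparison functions, maximum principle on compact slabs $[0,T']\times M^n$ with the $\varepsilon(1+t)$ perturbation, then $T'\uparrow T^{\star}$) goes through and yields both inequalities.
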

\begin{proof}
Let $w \equiv (\varphi - \tilde{\varphi})\e^{\lambda t}$ for a small $\lambda$ to be chosen later on. Without loss of generality we assume \begin{equation}\label{sup phi}
	\sup_M (\varphi_0 - \tilde{\varphi}_0) \geq  0.
\end{equation} Let $0 < t_0$ be the first time the maximum of $w$ is attained and this occurs at the point $x_0 \in M^n$. At the {\em spacetime} point $(t_0,x_0)$ we have
\begin{align}
	0 \leq \dot{w} = & \lambda w + (\dot{\varphi} - \dot{\tilde{\varphi}}) \e^{\lambda t}\\
			= & \lambda w + ( F^{-1} - \tilde{F}^{-1}) \e^{\lambda t},
\end{align}
where $F\equiv F(D\varphi(t_0,x_0), D^2 \varphi(t_0,x_0))$ and $\tilde{F}\equiv F(D\tilde{\varphi}(t_0,x_0), D^2 \tilde{\varphi}(t_0,x_0))$. Since $w$ attains its spatial maximum at this point, we have
\begin{equation}
	D\varphi = D\tilde{\varphi} \quad \text{  and  } \quad  D^2 \varphi \leq D^2 \tilde{\varphi}.
\end{equation}
Using these facts along with \eqref{F}, we infer
\[F \geq \tilde{F}.\] Thus, at $(t_0, x_0)$ we conclude \[0\geq - \lambda w.\] If we choose $\lambda < 0$ it follows that 
\[
	\sup_{Q_{T^{\star}}} w = w(t_0,x_0) \leq 0
\] 
Letting $\lambda \nearrow 0$, bearing \eqref{sup phi} in mind, implies the right hand inequality. The left hand inequality is proved analogously.
\end{proof}
\begin{cor}\label{comp cor}
	Let $\varphi$ solve \eqref{eq phi} on $Q_T$. Then the following holds
	\begin{equation}\label{phi bound}
		\inf_{M^n} \varphi_0 \leq \varphi - \frac{t}{n} \leq \sup_{M^n} \varphi_0.
	\end{equation}
\end{cor}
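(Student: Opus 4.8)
The plan is to apply the comparison principle of Theorem \ref{comp princ} with $\tilde\varphi$ taken to be an explicit spatially homogeneous solution of \eqref{eq phi}. First I would observe that for any constant $c\in\rel$ the function
\[
	\psi(t,p) \equiv c + \frac{t}{n}
\]
solves \eqref{eq phi} on $Q_T$: since $D\psi \equiv 0$ and $D^2\psi \equiv 0$, definition \eqref{v} gives $v\equiv 1$ along $\psi$, and \eqref{F} then yields $F(D\psi,D^2\psi) = F(0,0) = n$; hence $\dot\psi - [F(D\psi,D^2\psi)]^{-1} = \tfrac{1}{n} - \tfrac{1}{n} = 0$, with $\psi(0,\cdot) = c$. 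Geometrically $\psi$ merely records the fact that the round level sets $\{c\,\e^{t/n}\}\times M^n$ — which, by the mean curvature lemma of \S\ref{wp geometry}, have constant mean curvature $n/u$ — flow by inverse mean curvature. Being smooth, $\psi$ certainly belongs to the function class in which Theorem \ref{comp princ} is stated.

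Next I would feed the pair $\varphi,\psi$ into Theorem \ref{comp princ}. Its conclusion reads
\[
	\inf_M(\varphi_0 - \psi_0) \le \varphi - \psi \le \sup_M(\varphi_0 - \psi_0),
\]
that is,
\[
	\inf_M(\varphi_0 - c) \le \varphi - c - \frac{t}{n} \le \sup_M(\varphi_0 - c).
\]
Choosing $c=0$ (so that $\psi_0 \equiv 0$) gives precisely \eqref{phi bound}.

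I do not expect any real obstacle here: the only point requiring verification is that the homogeneous ansatz $\psi$ is an admissible comparison function, and this is immediate once one notes that $v=1$ and hence $F=n$ whenever the spatial derivatives vanish. As an alternative route one could bypass Theorem \ref{comp princ} and rerun its maximum-principle argument directly on $w \equiv (\varphi - t/n)\,\e^{\lambda t}$ at a first interior maximum, but invoking the already-proven comparison principle against the barrier $\psi$ is the cleaner option.
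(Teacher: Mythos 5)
Your proposal is correct and is essentially the paper's own proof: the paper also takes the spatially homogeneous solution $\tilde{\varphi} = t/n$ (your $\psi$ with $c=0$) and applies Theorem \ref{comp princ}. Your explicit check that $F(0,0)=n$, so that $\dot\psi - F^{-1} = 0$, is exactly the verification the paper leaves implicit.
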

\begin{proof}
	Define $\tilde{\varphi} = t/n$. Then $\tilde{\varphi}$ solves \eqref{eq phi}. Apply Theorem \ref{comp princ}.
\end{proof}

\begin{remark} \label{u exp bound}
	By exponentiation, this is equivalent to 
\begin{equation}\label{u bound}
	\inf_{M^n} u_0 \leq u \e^{-t/n} \leq \sup_{M^n} u_0,
\end{equation}
for the embedding function $u$, which solves \eqref{eq u}.
\end{remark}
\begin{remark}
	The solution $\tilde{\varphi}$ of \eqref{eq phi} in the proof of Corollary \ref{comp cor} corresponds to the solution of \eqref{eq} whereby the initial embedding is given by \[x_0:p\mapsto (1,p).\]
\end{remark}
\begin{remark}
As we shall be seeing shortly in \S \ref{convergence}, the estimate on $u$ in Remark \ref{u exp bound} alludes to the fact that if we were to introduce a factor of $\e^{-t/n}$, then the function $\tilde{u} = u\e^{-t/n}$ remains uniformly bounded from above and below. Thus we will be studying the convergence properties of \eqref{eq} for the corresponding rescaled embedding $\tilde{x}$ in \S \ref{rescaled hypersurfaces}.
\end{remark}

\subsection{short time existence and first order estimates} \label{first order estimates}
With the help of the parabolic theory \cite{ger2, lady, evans} we show that a solution to equation \eqref{eq phi} exists for a short time. Following this, as the heading suggests, we prove some first order estimates on $\varphi$.
\begin{lem}
Let $F = Huv^{-1}$ as defined in \eqref{F}. Then
\[ 
	a^{ij} \equiv -\frac{\partial F}{\partial \varphi_{ij}}
\]
is positive definite.
\end{lem}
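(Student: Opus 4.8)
The plan is to compute $a^{ij} = -\partial F/\partial\varphi_{ij}$ explicitly from the formula \eqref{F} and recognise it as a positive multiple of the inverse induced metric $g^{ij}$. Recall
\[
	F = v^{-2}\left(n - \sigma^{ij}\varphi_{ij} + v^{-2}\varphi^i\varphi^j\varphi_{ij}\right),
\]
and crucially $v^2 = 1 + |D\varphi|^2_\sigma$ depends only on $D\varphi$, not on $D^2\varphi$. Hence when differentiating $F$ with respect to the second-derivative variables $\varphi_{ij}$ the factors $v^{-2}$ are treated as constants, and only the terms linear in $\varphi_{ij}$ contribute. Differentiating (and symmetrising over $i,j$, since $\varphi_{ij}$ is symmetric) yields
\[
	a^{ij} = -\frac{\partial F}{\partial\varphi_{ij}} = v^{-2}\left(\sigma^{ij} - v^{-2}\varphi^i\varphi^j\right).
\]

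The next step is to identify this expression. Comparing with the formula for the inverse induced metric derived in \S2.3,
\[
	g^{ij} = u^{-2}\left(\sigma^{ij} - v^{-2}\varphi^i\varphi^j\right),
\]
we see immediately that $a^{ij} = u^2 v^{-2} g^{ij}$. Since $u > 0$ and $v \geq 1$, the scalar $u^2 v^{-2}$ is strictly positive, so positive-definiteness of $a^{ij}$ reduces to positive-definiteness of $g^{ij}$, which holds because $g$ is a Riemannian metric (the pullback of $\gamma$ under an immersion) and hence so is its inverse.

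For completeness I would also give the direct verification that $\sigma^{ij} - v^{-2}\varphi^i\varphi^j$ is positive definite, independent of the metric interpretation: for any covector $\xi \neq 0$,
\[
	\left(\sigma^{ij} - v^{-2}\varphi^i\varphi^j\right)\xi_i\xi_j = |\xi|^2_\sigma - v^{-2}\langle D\varphi,\xi\rangle_\sigma^2 \geq |\xi|^2_\sigma\left(1 - v^{-2}|D\varphi|^2_\sigma\right) = v^{-2}|\xi|^2_\sigma > 0,
\]
where the inequality is Cauchy–Schwarz applied to $\sigma$ and the final equality uses $1 - v^{-2}|D\varphi|^2_\sigma = v^{-2}$ (the identity already invoked in Proposition \ref{conn g conn sigma}).

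There is no real obstacle here; the only point requiring a moment's care is the observation of Remark \ref{F phi independent} and the remark preceding \eqref{v}, namely that $v$ is a function of $D\varphi$ alone, so that it does not participate in the $\partial/\partial\varphi_{ij}$ differentiation — without this the computation would acquire spurious terms. A secondary bookkeeping point is the symmetrisation: $\partial(\sigma^{kl}\varphi_{kl})/\partial\varphi_{ij}$ should be taken as $\tfrac12(\sigma^{ij}+\sigma^{ji}) = \sigma^{ij}$ and similarly for the quadratic term, which is automatic here by symmetry of $\sigma$ and of $\varphi^i\varphi^j$.
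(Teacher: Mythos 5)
Your proposal is correct and follows essentially the same route as the paper: differentiate $F$ in $\varphi_{ij}$ (noting $v$ depends only on $D\varphi$), identify $a^{ij}=u^{2}v^{-2}g^{ij}$, and conclude from positive definiteness of the inverse induced metric. Your added Cauchy--Schwarz verification that $\sigma^{ij}-v^{-2}\varphi^{i}\varphi^{j}$ is positive definite is a nice self-contained substitute for the paper's citation of the trace identity $\trc_{\sigma}\{g^{ij}\}=n-1+v^{-2}$, but it does not change the argument in substance.
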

\begin{proof}
	Since
\[
	F = v^{-2}(n - \sigma^{ij}\varphi_{ij}+ \varphi^i \varphi^j\varphi_{ij} v^{-2} ) = v^{-2}(n-u^2 g^{ij}\varphi_{ij}),
\]
we simply differentiate $F$ with respect to $\varphi_{ij}$ to find
\begin{equation}
	-a^{ij} = \frac{\partial F}{\partial \varphi_{ij}} = -  v^{-2}u^{2} g^{ij}.
\end{equation}
Since $u$ and $v$ are both positive and $g^{ij}$ is positive definite, having trace $n-1+(1+|D\varphi|^2)^{-1}$ with respect to $\sigma$, (see \cite[\S 2.2]{mullins}), we conclude that $a^{ij}$ is positive definite.
\end{proof}
\begin{remark}\label{aij pos def}
	Since, as remarked upon above, \[\trc_{\sigma} \{g^{ij}\} = n-1+v^{-2},\] and $v$ is uniformly bounded from above and below, we conlude that $F^{-1}$ is {\em uniformly parabolic} on $Q_{T^{\star}}$. We will also show this more explicitly later (see Theorem \ref{H bound}) where we derive an evolution equation for $F^{-1} \equiv \chi H^{-1}$, where $\chi$ is to be defined at the beginning of \S \ref{graphical nature}.
\end{remark}
\begin{thm}\label{short time existence}
	There exists a unique solution of class $H^{2+\beta,\frac{2+\beta}{2}}(Q_{T^{\star}})$ to \eqref{eq phi}, given $C^{2,\alpha}$ initial data, for $0<\beta<\alpha$, where $T^{\star}$ depends on $\beta$ and the initial data.
\end{thm}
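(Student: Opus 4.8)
The plan is to recast \eqref{eq phi} in a form to which the standard quasilinear parabolic existence theory (as developed in \cite{ger2, lady, evans}) applies directly. First I would note, using Remark \ref{F phi independent}, that the operator $G(D\varphi, D^2\varphi) \equiv -F(D\varphi,D^2\varphi)^{-1}$ depends only on the first and second spatial derivatives of $\varphi$, and is smooth (indeed real-analytic) in its arguments wherever $v>0$ and $F>0$. Since the initial embedding $x_0$ has strictly positive mean curvature $H_0$, and $F = H_0 u_0 v^{-1}$ at $t=0$, we have $F>0$ on a neighbourhood of $t=0$; combined with the previous lemma, which identifies $a^{ij} = -\partial F/\partial \varphi_{ij} = v^{-2}u^2 g^{ij}$ as positive definite, the linearisation of $G$ at $D^2\varphi$, namely $\partial G/\partial\varphi_{ij} = F^{-2}\,\partial F/\partial\varphi_{ij} = -F^{-2}a^{ij}$, is negative definite, so $G$ is (locally) uniformly elliptic and hence \eqref{eq phi} is a quasilinear uniformly parabolic equation near $t=0$.

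Next I would invoke the short-time existence theorem for scalar quasilinear parabolic equations on a closed manifold: given $C^{2,\alpha}$ initial data $\varphi_0$ and a parabolic operator whose coefficients are smooth in $(D\varphi,D^2\varphi)$ on the relevant open set, there is a $T^\star>0$, depending only on $\beta$, on $\|\varphi_0\|_{C^{2,\alpha}}$, on the geometry of $(M^n,\sigma)$, and on the ellipticity constants (which in turn are controlled by a lower bound on $H_0$ and on $u_0$), such that a solution $\varphi \in H^{2+\beta,\frac{2+\beta}{2}}(Q_{T^\star})$ exists for every $0<\beta<\alpha$. Concretely, one freezes the coefficients, solves the resulting linear parabolic problem in parabolic Hölder spaces (Schauder theory, cf. \cite[Ch.~IV]{lady}), and runs a contraction mapping / Banach fixed-point argument in a ball of $H^{2+\beta,\frac{2+\beta}{2}}(Q_T)$ centred at (a parabolic extension of) $\varphi_0$, shrinking $T$ until the map is a contraction; the open condition $F>0$, $v<\infty$ persists on $Q_{T^\star}$ for $T^\star$ small by continuity. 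Uniqueness within this class follows either from the same fixed-point argument or, more cleanly, from the comparison principle of Theorem \ref{comp princ} applied to two solutions with the same initial data.

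The one genuinely new point to check — and what I would single out as the main technical obstacle — is that the problem really is \emph{uniformly} parabolic on the initial slice, i.e. that the ratio of the largest to smallest eigenvalue of $a^{ij}$ (equivalently of $g^{ij}$, since $a^{ij}=v^{-2}u^2 g^{ij}$) is controlled a priori. This is exactly the content of Remark \ref{aij pos def}: the trace of $g^{ij}$ with respect to $\sigma$ equals $n-1+v^{-2}$, which is bounded above by $n$ and below by $n-1$, so the eigenvalues of $g^{ij}$ relative to $\sigma$ are pinched between $v^{-2}$ and $1$; since $v^{-2} = 1/(1+|D\varphi_0|^2_\sigma)$ is bounded below in terms of $\|\varphi_0\|_{C^1}$, uniform parabolicity holds on $Q_{T^\star}$ for $T^\star$ small. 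Everything else — the smoothness of $G$, the Schauder estimates, the fixed-point iteration — is entirely standard once this structural fact and the positivity $F>0$ (guaranteed by $H_0>0$) are in hand, so I would state those two points explicitly and then cite \cite{ger2, lady} for the remainder of the argument.
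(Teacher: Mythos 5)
Your proposal is correct and follows essentially the same route as the paper, whose proof simply verifies uniform parabolicity from $F^{-2}a^{ij}>0$ (with $H_0>0$ giving $F>0$ initially) and invokes the short-time existence theorem \cite[Theorem 2.5.7]{ger2}. One small caveat: since the operator is $F^{-1}$ rather than $F$, equation \eqref{eq phi} is fully nonlinear in $D^2\varphi$, not quasilinear, so the black box to cite is the fully nonlinear parabolic existence theory of \cite{ger2} (linearisation plus implicit-function/fixed-point argument) rather than a frozen-coefficient quasilinear iteration; this does not affect the substance of your argument.
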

\begin{proof}
	The prerequisites of \cite[Theorem 2.5.7]{ger2} are fulfilled, since we have 
\[
	F^{-2} a^{ij}> 0,
\] i.e. \eqref{eq phi} is uniformly parabolic on some time interval $[0,T^{\star})$.
\end{proof}
\begin{lem}
	Let $\varphi$ solve \eqref{eq phi} on $Q_{T^{\star}}$. Then the following holds
	\begin{equation}
		\inf_M \dot{\varphi}(0, \cdot) \leq \dot{\varphi} \leq \sup_M \dot{\varphi}(0, \cdot)
	\end{equation}
\end{lem}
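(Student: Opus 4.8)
The plan is to differentiate the evolution equation \eqref{eq phi} with respect to $t$ and apply the parabolic maximum principle to the time derivative $\dot\varphi$. Set $\psi \equiv \dot\varphi$. Since $F = F(D\varphi, D^2\varphi)$ depends on $\varphi$ only through its first and second spatial derivatives (Remark \ref{F phi independent}), differentiating $\dot\varphi = F^{-1}$ in $t$ gives, by the chain rule,
\begin{equation*}
	\dot\psi = -F^{-2}\left( \frac{\partial F}{\partial \varphi_{ij}} \psi_{ij} + \frac{\partial F}{\partial \varphi_i}\psi_i \right) = F^{-2} a^{ij}\psi_{ij} + b^i \psi_i,
\end{equation*}
where $a^{ij} = -\partial F/\partial \varphi_{ij}$ is positive definite by the preceding lemma, and $b^i \equiv -F^{-2}\,\partial F/\partial \varphi_i$ is a bounded coefficient (here one uses that $\varphi \in H^{2+\beta,\frac{2+\beta}{2}}(Q_{T^\star})$, so $D\varphi$, $D^2\varphi$ and hence $F$, its derivatives, and $F^{-2}$ are all bounded on $Q_{T^\star}$, and $F^{-2}a^{ij} > 0$ by uniform parabolicity). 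Thus $\psi$ satisfies a linear, uniformly parabolic equation with no zeroth-order term on $Q_{T^\star}$.

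First I would verify the differentiation step carefully: one needs enough regularity to commute $\partial_t$ with the spatial derivatives appearing in $F$, which is legitimate on $(0,T^\star)\times M^n$ since $\varphi$ is smooth there (or at least $C^{2,\beta}$ in space and $C^{1,\beta/2}$ in time, with interior parabolic regularity upgrading this as needed for the argument). Having established that $\psi$ solves a homogeneous linear parabolic PDE, the weak maximum principle on the compact manifold $M^n$ applies directly: at a spatial maximum of $\psi$ at a time $t_0 > 0$ one has $\psi_i = 0$ and $\psi_{ij} \le 0$, so $a^{ij}\psi_{ij} \le 0$, forcing $\dot\psi \le 0$ there; hence $\sup_M \psi(t,\cdot)$ is non-increasing in $t$ and the upper bound $\dot\varphi \le \sup_M \dot\varphi(0,\cdot)$ follows. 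The lower bound is obtained by applying the same reasoning to $-\psi$, or equivalently by noting $\inf_M \psi(t,\cdot)$ is non-decreasing.

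Alternatively — and perhaps more cleanly, to avoid any subtlety about regularity at $t=0$ — one can argue as in the proof of Theorem \ref{comp princ}: if $\varphi$ solves \eqref{eq phi}, then for any constant $c$ the time-shifted function $\varphi_c(t,\cdot) \equiv \varphi(t+c,\cdot)$ also solves \eqref{eq phi} (on a slightly shorter cylinder), since the equation is autonomous. Applying the comparison principle Theorem \ref{comp princ} to $\varphi$ and $\varphi_c$ with small $c>0$, dividing by $c$, and letting $c \to 0$ yields precisely $\inf_M \dot\varphi(0,\cdot) \le \dot\varphi \le \sup_M \dot\varphi(0,\cdot)$. I expect the main obstacle to be the regularity/justification of the limiting argument (or of the $\partial_t$-differentiation), i.e. ensuring that difference quotients of $\varphi$ in $t$ converge to $\dot\varphi$ uniformly enough to pass to the limit in the already-established $C^0$ comparison estimate; the maximum-principle core of the argument is routine once the linearized equation for $\psi$ is in hand.
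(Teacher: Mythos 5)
Your main argument is exactly the paper's proof: differentiate \eqref{eq phi} in $t$, use that $F$ depends only on $D\varphi$ and $D^2\varphi$ to get a linear uniformly parabolic equation for $\dot\varphi$ with positive definite $a^{ij}$ and no zeroth-order term, and conclude by the parabolic maximum principle on the compact manifold. This is correct and matches the paper (your alternative time-shift/comparison-principle route is a fine bonus but not needed).
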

\begin{proof}
	Differentiate \eqref{eq phi} with respect to $t$ to obtain
	\begin{equation}\label{dt eq phi}
		\ddot{\varphi} + F^{-2} \left(  - a^{ij} D_i D_j \dot{\varphi} + a^i D_i \dot{\varphi}  \right) = 0,
	\end{equation}
where $a^i = \partial F / \partial \varphi_i$. Applying the parabolic maximum principle (see \cite[\S 7.1 Theorem 8]{evans}) yields the result.
\end{proof}
\begin{lem}\label{Dphi exp}
Let $\varphi$ solve \eqref{eq phi} on $Q_{T^{\star}}$ and condition \eqref{ricci bound} hold. Then there exists a $\lambda > 0$, such that
\begin{equation}\label{Dphi exp eq}
	|D\varphi|^2_{\sigma}\e^{\lambda t} \leq \sup_{M^n} |D\varphi_0|^2_{\sigma} 
\end{equation}
on $Q_{T^{\star}}$.
\end{lem}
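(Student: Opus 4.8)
The plan is to derive a parabolic differential inequality for the scalar $\psi \equiv |D\varphi|^2_\sigma = \sigma^{kl}\varphi_k\varphi_l$ in which the zeroth–order term is dominated, with the favourable sign, by $\Ric_M(D\varphi)$, and then to run a maximum principle on $w \equiv \psi\,\e^{\lambda t}$ for a small $\lambda>0$ chosen in terms of the initial data and of the lower bound coming from \eqref{ricci bound}. Throughout I use that $(M^n,\sigma)$ is compact, so \eqref{ricci bound} upgrades to $\Ric_M(X)\ge 2\lambda_0|X|^2_\sigma$ for some $\lambda_0>0$, and that $H_0>0$ on $M^n$ gives, by the preceding lemma, $F^{-1}=\dot\varphi\ge \inf_{M^n}\tfrac{v_0}{H_0u_0}\equiv c_0>0$, hence $F^{-2}\ge c_0^2$ on $Q_{T^{\star}}$.

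First I would compute the evolution of $\psi$. Differentiating in time and using $\dot\varphi=F^{-1}$ from \eqref{eq phi} gives $\dot\psi = 2\sigma^{kl}\varphi_k D_l(F^{-1}) = -2F^{-2}\sigma^{kl}\varphi_k D_l F$; since $\partial F/\partial\varphi_{ij}=-a^{ij}$ and $\partial F/\partial\varphi_i=a^i$, one has $D_lF = a^i\varphi_{il}-a^{ij}\varphi_{ijl}$, so that $\dot\psi = 2F^{-2}a^{ij}\varphi^l\varphi_{ijl} - F^{-2}a^iD_i\psi$ with $\varphi^l\equiv\sigma^{lk}\varphi_k$ (using $D_i\psi=2\varphi^l\varphi_{il}$). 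The substantive step is to commute the third covariant derivatives of $\varphi$ via the Ricci identity \eqref{commutator form}: this yields $\varphi_{ijl} = \varphi_{lji} + \big({}^{\sigma}R_{lji}{}^{m} - {}^{\sigma}R_{ijl}{}^{m}\big)\varphi_m$, and feeding in $\varphi^l\varphi_{lji} = \tfrac12 D_iD_j\psi - \sigma^{kl}\varphi_{ik}\varphi_{jl}$ produces
\[
	\dot\psi - F^{-2}a^{ij}D_iD_j\psi + F^{-2}a^iD_i\psi = -2F^{-2}a^{ij}\sigma^{kl}\varphi_{ik}\varphi_{jl} + 2F^{-2}a^{ij}\,{}^{\sigma}R_{ljim}\,\varphi^l\varphi^m .
\]
Here is where the hypothesis \eqref{ricci bound} must be squeezed out, and I expect this to be the main obstacle: because $a^{ij}$ is symmetric, $a^{ij}\,{}^{\sigma}R_{ijl}{}^{m}=0$ by antisymmetry of Riemann in its first pair, so only one curvature term survives the commutation; and with $a^{ij}=v^{-2}u^2g^{ij}=v^{-2}(\sigma^{ij}-v^{-2}\varphi^i\varphi^j)$, the $\varphi^i\varphi^j$–piece contracts $\varphi^l\varphi^m\,{}^{\sigma}R_{ljim}$ to zero (all four slots carry $D\varphi$, two forming an antisymmetric pair), while the $\sigma^{ij}$–piece collapses to $v^{-2}\varphi^l\varphi^m(\sigma^{ij}\,{}^{\sigma}R_{ljim}) = -v^{-2}\Ric_M(D\varphi)$. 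Since also $a^{ij}\sigma^{kl}\varphi_{ik}\varphi_{jl}\ge 0$ (pairing of the positive definite $a^{ij}$ with a positive semidefinite tensor), dropping that term leaves
\[
	\dot\psi - F^{-2}a^{ij}D_iD_j\psi + F^{-2}a^iD_i\psi \;\le\; -2F^{-2}v^{-2}\Ric_M(D\varphi) \;\le\; -4\lambda_0 F^{-2}v^{-2}\psi .
\]

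With this in hand the remainder is a routine maximum principle. If $D\varphi_0\equiv 0$ on $M^n$ then $u_0$ is constant, and by uniqueness $u(t,\cdot)=u_0\e^{t/n}$ so $\psi\equiv 0$ and \eqref{Dphi exp eq} is trivial; hence I may assume $A\equiv\sup_{M^n}|D\varphi_0|^2_\sigma>0$. Put $w=\psi\,\e^{\lambda t}$; using $v^{-2}=(1+\psi)^{-1}$ and $F^{-2}\ge c_0^2$, the inequality above becomes
\[
	\dot w - F^{-2}a^{ij}D_iD_j w + F^{-2}a^iD_i w \;\le\; \Big(\lambda - \frac{4\lambda_0 c_0^2}{1+\psi}\Big)w .
\]
I would then fix $\lambda\in\big(0,\ 4\lambda_0 c_0^2/(1+A)\big)$ and argue by contradiction: were \eqref{Dphi exp eq} to fail there would be a first time $T_0<T^{\star}$ at which $\max_{M^n}w(T_0,\cdot)=A$, attained at some $x_0$, where $\dot w\ge 0$, $Dw=0$ and $D^2w\le 0$; positive definiteness of $a^{ij}$ then forces the left-hand side above to be $\ge 0$ at $(T_0,x_0)$, whence $\lambda\ge 4\lambda_0 c_0^2/(1+\psi(T_0,x_0))$. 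But $\psi(T_0,x_0)=A\,\e^{-\lambda T_0}\le A$, so $\lambda\ge 4\lambda_0 c_0^2/(1+A)$, contradicting the choice of $\lambda$. Therefore $w\le A$ on $Q_{T^{\star}}$, which is exactly \eqref{Dphi exp eq} with this $\lambda>0$.
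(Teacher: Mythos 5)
Your proposal is correct and follows essentially the same route as the paper: differentiate \eqref{eq phi} along $D\varphi$, commute third derivatives with the Ricci identity, discard the nonnegative term $a^{ij}\sigma^{kl}\varphi_{ik}\varphi_{jl}$, and apply a maximum principle to $|D\varphi|^2_{\sigma}\e^{\lambda t}$ with $\lambda$ chosen small using the uniform lower bound on $F^{-2}$ furnished by the preceding $\dot{\varphi}$-estimate. Your exact evaluation of the curvature contraction (the $\varphi^i\varphi^j$-part of $a^{ij}$ annihilating the Riemann term by antisymmetry, leaving precisely $-v^{-2}\Ric_M(D\varphi)$) slightly sharpens the paper's smallest-eigenvalue bound $a^{ij}R_{ikjm}\varphi^k\varphi^m \geq \mu\,\Ric_M(D\varphi)$; the only wording to repair is that ``the first time $\max_{M^n}w=A$'' is literally $t=0$, so the contact-time argument should be phrased as the first time the bound $w\leq A$ is violated (or run against $A+\epsilon$ and let $\epsilon\searrow 0$), a cosmetic fix.
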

\begin{proof}
	We aim to derive an evolution equation for the left hand side of \eqref{Dphi exp eq} and apply the maximum principle to deduce the inequality. Firstly, let $w = \frac{1}{2}|D\varphi|^2$. We differentiate \eqref{eq phi} in the direction of $D\varphi$ to obtain
 \begin{align}
\notag		0 & = \sigma^{kl}\varphi_l D_k (\dot{\varphi} - F^{-1})\\
\label{Dphi(eq)}	  & = \dot{w} + F^{-2}\big( -a^{ij}\sigma^{kl}D_k D_j \varphi_i \varphi_l + a^iD_i w\big),
\end{align}
where $a^{i} \equiv \partial F/ \partial \varphi_i$, having used the following identities:
\[\dot{w} = \sigma^{kl}\varphi_l D_k \dot{\varphi}, \] 
\[D_i w = \sigma^{kl}D_i\varphi_k \varphi_l.\] 
Recall from \eqref{commutator form} that
\begin{equation}\label{curv phi}
	D_k D_j \varphi_i = D_k D_i \varphi_j = D_iD_k \varphi_j - \tensor{R}{_i_k_j^m}\varphi_m,
\end{equation} 
where the first equality is on account of the torsion free connection $D$ on $M^n$. We also observe
\begin{equation}\label{D2w}
	D_i D_j w = \sigma^{kl}D_iD_j\varphi_k \varphi_l + \sigma^{kl} D_j \varphi_k D_i\varphi_l.
\end{equation}
Subtracting and adding the quantity $F^{-2}a^{ij}\sigma^{kl} D_j \varphi_k D_i\varphi_l$ (i.e. adding a clever 0) to \eqref{Dphi(eq)} and plugging in \eqref{curv phi} and \eqref{D2w} we arrive at
\begin{equation} \label{wfin}
	\dot{w} + F^{-2} \big(  -a^{ij} D_iD_j w + a^{ij}\sigma^{kl}D_j\varphi_k D_i \varphi_l + a^{ij}\tensor{R}{_i_k_j_m}\varphi^m \varphi^k + a^i D_i w  \big) = 0.
\end{equation}
For second term inside the parenthesis we have:
\begin{equation}
	a^{ij}\sigma^{kl}D_j\varphi_{k} D_i \varphi_l \geq 0,
\end{equation}
since $\{a^{ij}\}$ and $\sigma^{ij}$ are both positive definite and symmetric (see \cite[\S 4]{ding}). Subtracting this term from \eqref{wfin} gives
\begin{equation}\label{wleq}
	\dot{w} +F^{-2} \big( -a^{ij}D_i D_j w + a^{ij}\tensor{R}{_i_k_j_m}\varphi^m \varphi^k+ a^iD_iw \big) \leq 0.
\end{equation}
Now multiply \eqref{wleq} by $e^{\lambda t}$ and define $\tilde{w} = we^{\lambda t},$
to give
\begin{equation}
	\dot{w}e^{\lambda t} +F^{-2} \big( -a^{ij}D_i D_j \tilde{w} + a^{ij}\tensor{R}{_i_k_j_m}\varphi^k \varphi^m e^{\lambda t} + a^iD_i\tilde{w} \big) \leq 0,
\end{equation}
and finally add and subtract $\lambda \tilde{w}$, leaving us with
\begin{equation}
	\dot{\tilde{w}} + F^{-2}\big(  -a^{ij} D_i D_j \tilde{w} + a^{ij} \tensor{R}{_i_k_j_m}\varphi^k \varphi^m e^{\lambda t} + a^i D_i \tilde{w} -\lambda F^2 \tilde{w} \big) \leq 0.
\end{equation}
Let $\mu$ be the smallest eigenvalue of $\{a^{ij}\}$ (in light of Remark \ref{aij pos def} we have $\mu >0$ everywhere on $Q_{T^{\star}}$). Then together with \eqref{ricci bound} 
\[
	a^{ij}\tensor{R}{_i_k_j_m} \varphi^k \varphi^m e^{\lambda t}\geq \mu \Ric_M(D\varphi)\e^{\lambda t} \geq \mu \delta |D\varphi|^2_{\sigma}\e^{\lambda t} = 2\mu \delta \tilde{w},
\] 
for a small $\delta >0$ (the existence of which is ensured in view of \eqref{ricci bound} and the compactness of $M^n$), which implies 
\[ 
	\dot{\tilde{w}} + F^{-2} \big( -a^{ij}D_i D_j \tilde{w} + a^iD_i \tilde{w}+ (2\mu \delta - \lambda F^2)\tilde{w}  \big) \leq 0.
\]
Thus, by applying the maximum principle (see \cite[\S 7.1 Theorem 9]{evans}), the lemma is proved with $\lambda \leq 2\mu\delta F^{-2}$.
\end{proof}

\subsection{the evolution equations}\label{evolution equations}
Assuming short time existence of the flow \eqref{eq}, we focus our attention on how the solution changes shape, and how the quantities of interest evolve under the flow. The evolution equations are paramount to the study of how solutions behave in the long term, or in the neighbourhood of singularities.\\
\\
Since we are dealing with quantities that live on the evolving hypersurfaces, we will be covariantly differentiating with respect to the metric $g$ of $M_t$ and the {\em pushforward basis} of $T_{x(p)}M_t$, thus with the operator $\nabla$. Furthermore, the derivative with respect to $t$, designated as both $\dot{}$ and $\dt{}$ interchangeably, now refers to the ambient covarient derivative in the direction of $\nu$ with a magnitude of $H^{-1}$, i.e. \[\dt{} = \frac{D}{\partial t}= H^{-1}\nablaN_{\nu}\]
\begin{lem}
	The unit normal vector $\nu$ evolves according to
	\begin{equation}\label{ev eq nu}
		\dot{\nu} = H^{-2} \nabla H = H^{-2} H^{;k}x_k
	\end{equation}
\end{lem}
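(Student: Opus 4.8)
The plan is to differentiate the relation $\gamma(\nu,\nu)=1$ together with the orthogonality relations $\gamma(\nu,x_i)=0$ along the flow, and to decompose $\dot\nu$ into its tangential and normal parts with respect to the moving frame $\{x_k,\nu\}$ of $T_{x(p)}N$. Since $\dt{} = H^{-1}\nablaN_\nu$ is a metric-compatible ambient derivative, differentiating $\gamma(\nu,\nu)=1$ immediately gives $\gamma(\dot\nu,\nu)=0$, so $\dot\nu$ has no normal component and may be written $\dot\nu = a^k x_k$ for some coefficients $a^k$ to be determined.

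Next I would pin down the $a^k$ by differentiating the orthogonality relation $\gamma(\nu,x_i)=0$ in time. This yields $\gamma(\dot\nu,x_i) = -\gamma(\nu,\dot x_i)$, so one needs $\dot x_i = \frac{D}{\partial t}\,\de\!x(e_i)$. The key computation is to commute the time derivative with the spatial derivative: since $t$ and $y^i$ are independent coordinates on $Q_{T^\star}$, we have $\frac{D}{\partial t}x_i = \nablaN_{x_i}\dot x = \nablaN_{x_i}(H^{-1}\nu)$, using the flow equation \eqref{eq}. Expanding via the product rule and the Weingarten equation \eqref{weingarten eq}, $\nablaN_{x_i}(H^{-1}\nu) = -H^{-2}H_i\,\nu + H^{-1}h_i^k x_k$, so its $\gamma$-pairing with $\nu$ is $-H^{-2}H_i$. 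Hence $\gamma(\dot\nu,x_i) = H^{-2}H_i$, and raising the index with $g^{ij}$ gives $a^k = H^{-2}H^{;k}$, i.e. $\dot\nu = H^{-2}H^{;k}x_k = H^{-2}\nabla H$, which is exactly the claim.

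The one point that requires a little care — and the only real obstacle — is the legitimacy of the commutation $\frac{D}{\partial t}x_i = \nablaN_{x_i}\dot x$. This is the standard fact that the pullback connection's mixed partials commute because the coordinate vector fields $\partial_t$ and $\partial_{y^i}$ on $Q_{T^\star}$ bracket to zero and the ambient connection $\nablaN$ is torsion-free; one should state it cleanly rather than treat it as obvious, since it is the mechanism by which $\nabla H$ enters. Everything else is a routine index bookkeeping exercise using \eqref{weingarten eq}, metric compatibility of $\nablaN$, and the definition $\dt{} = H^{-1}\nablaN_\nu$.

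One should also note that the second equality in \eqref{ev eq nu} is merely the identity $\nabla H = H^{;k}x_k$ under the push-forward identification of $T_pM_t$ with its image in $T_{x(p)}N$, so no separate argument is needed there. I would present the proof in the order: (i) normal component vanishes from $|\nu|^2=1$; (ii) compute $\frac{D}{\partial t}x_i$ via the commutation and the flow equation; (iii) pair with $\nu$ and raise the index to read off the tangential coefficients.
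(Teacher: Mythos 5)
Your argument is correct and follows essentially the same route as the paper: derive $\gamma(\dot\nu,\nu)=0$ from metric compatibility, compute $\dot x_i=\nablaN_{x_i}(H^{-1}\nu)$ by commuting the time and spatial derivatives (the paper invokes the symmetry lemma along a curve, which is the same torsion-free mechanism you describe), and then pair with $\nu$ and raise the index. The only cosmetic difference is that you expand the Weingarten relation explicitly before pairing, whereas the paper uses $\gamma(\nu,\nu_i)=0$ directly; the content is identical.
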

\begin{proof} 
Since $\nu$ is of unit length and $\dt{}$ is compatible with the metric $\gamma$, we have
\begin{equation} \label{d of norm}
	0 = \dt{} 1 = \dt{} \gamma(\nu, \nu) = 2 \gamma\left( \dot{\nu} , \nu \right),
\end{equation}
or in other words, $\dot{\nu}$ resides exclusively in $TM_t$. We can express $\dot{\nu}$ thusly
\begin{equation}\label{nu dot}
	\dot{\nu} = g^{ik} \gamma(\dot{\nu}, x_i) x_k = - g^{ik} \gamma(\nu, \dot{x}_i) x_k.
\end{equation}
To see how the basis vector $x_i$ evolves, let $c:(-\epsilon,\epsilon)\longrightarrow M^n$ be a differentiable curve such that 
\[
	\ds{c} \bigg|_{s=0} = e_i. 
\] Per definition 
\[
	x_i = x_{\star} (e_i) = \ds{} \bigg|_{s=0} x(t,c(s)).
\]
This shows subsequently that
\begin{equation} \label{ev basis}
	\dot{x}_i = \frac{D}{\partial t} \frac{\partial}{\partial s} \bigg|_{s=0} x(t,c(s)) =  \frac{D}{\partial s}\bigg|_{s=0} \frac{\partial}{\partial t}  x(t,c(s)) = \frac{D}{\partial s}\bigg|_{s=0} (H^{-1}\nu)_{x(t,c(s))} =  (H^{-1}\nu)_i
\end{equation}
where we have used \cite[\S 3 Lemma 3.4]{do carmo} to reverse the order of differentiation (for details, see \cite{mullins}). Once again using \eqref{d of norm}, this time in the second equality of \eqref{nu dot} with $\de = (\nablaN_{x_i})^{\top} \equiv \nabla_i$, we conclude
\[
	\dot{\nu} = -g^{ik}\gamma(\nu, (H^{-1}\nu)_i) x_k = -g^{ik} (H^{-1})_i\gamma(\nu,\nu) x_k  = H^{-2} g^{ik} H_{;i} x_k.
\]
\end{proof}
\begin{remark} \label{commute}
	What equation \eqref{ev basis} alludes to is the fact that the flows of the vector fields $H^{-1}\nu$ and $x_i$ commute, i.e. 
\[ [H^{-1}\nu, x_{i}] = 0, \] or equivalently \[ \nablaN_{H^{-1}\nu}x_i = \nablaN_{x_i}(H^{-1}\nu) \equiv (H^{-1}\nu)_i.\]
\end{remark}
\begin{lem}
	The metric $g_{ij}$ evolves according to
	\begin{equation} \label{ev eq gij}
		\dot{g}_{ij} = 2H^{-1}h_{ij}.
	\end{equation}
\end{lem}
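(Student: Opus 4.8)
The plan is to differentiate the defining identity $g_{ij} = \gamma(x_i,x_j)$, exploiting that the time derivative $\dt{} = H^{-1}\nablaN_{\nu}$ is compatible with the ambient metric $\gamma$. This gives immediately
\[
	\dot{g}_{ij} = \gamma(\dot{x}_i, x_j) + \gamma(x_i, \dot{x}_j),
\]
so the whole computation reduces to understanding how the pushforward basis $\{x_i\}$ evolves.

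For the latter I would invoke \eqref{ev basis} --- equivalently Remark \ref{commute}, that the flows of $H^{-1}\nu$ and $x_i$ commute --- to write $\dot{x}_i = \nablaN_{x_i}(H^{-1}\nu)$. Expanding by the Leibniz rule and applying the Weingarten equation \eqref{weingarten eq} in the form $\nablaN_{x_i}\nu = \nu_i = h^k_i x_k$ yields
\[
	\dot{x}_i = -H^{-2}H_{;i}\,\nu + H^{-1}h^k_i x_k.
\]
Pairing this against $x_j$, the normal component is annihilated since $\gamma(\nu, x_j) = 0$, leaving $\gamma(\dot{x}_i, x_j) = H^{-1}h^k_i g_{kj} = H^{-1}h_{ij}$; by symmetry of $h$ the term $\gamma(x_i,\dot{x}_j)$ contributes the same, and adding the two gives $\dot{g}_{ij} = 2H^{-1}h_{ij}$.

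There is no genuine obstacle here: the argument is a routine consequence of the Gauss--Weingarten formalism already assembled in \S\ref{evolution equations}. The only step demanding care --- the identity $\dot{x}_i = \nablaN_{x_i}(H^{-1}\nu)$, which rests on interchanging the $s$- and $t$-derivatives --- has already been justified in the proof of \eqref{ev eq nu} via the symmetry lemma of \cite{do carmo}, so it can simply be quoted.
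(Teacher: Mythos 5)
Your proof is correct and follows essentially the same route as the paper: differentiate $g_{ij}=\gamma(x_i,x_j)$ using metric compatibility, substitute $\dot{x}_i=(H^{-1}\nu)_i$ from \eqref{ev basis}, and conclude via the Weingarten equation and $\gamma(\nu,x_j)=0$. The only cosmetic difference is that you expand the Leibniz rule explicitly where the paper invokes the symmetry of $g_{ij}$ to write $\dot{g}_{ij}=2\gamma(\dot{x}_i,x_j)$ at once.
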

\begin{proof}
	Using \eqref{g def}, \eqref{weingarten eq}, \eqref{ev basis}, the symmetry of $g_{ij}$ and the fact that $\gamma(\nu,x_i) = 0$ we obtain
\[
	\dot{g}_{ij} = 2\gamma(\dot{x}_i, x_j) = 2\gamma((H^{-1}\nu)_i, x_j) = 2H^{-1}\gamma(\nu_i, x_j) = 2H^{-1}h_{ij}.
\]
\end{proof}
\begin{cor}
	The inverse metric fulfills the evolution equation
	\begin{equation} \label{ev eq inverse g}
		\dot{g}^{ij} = -2H h^{ij},
	\end{equation}
where $h^{ij} \equiv g^{ik} g^{jl} h_{kl}$.
\end{cor}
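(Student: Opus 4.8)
The plan is to deduce this identity from the evolution equation \eqref{ev eq gij} for $g_{ij}$ by a purely algebraic manipulation; no new geometric input beyond the previous lemma is needed. The one structural fact I would rely on is that the covariant time derivative $\frac{D}{\partial t} = H^{-1}\nablaN_{\nu}$ is compatible with the ambient metric and therefore commutes with contractions, and in particular annihilates the (constant, parallel) Kronecker tensor $\delta^i_j$.

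First I would differentiate the defining identity $g^{ik}g_{kj} = \delta^i_j$ with respect to $t$. Since $\frac{D}{\partial t}\delta^i_j = 0$, the Leibniz rule yields
\begin{equation*}
	\dot{g}^{ik}g_{kj} + g^{ik}\dot{g}_{kj} = 0,
\end{equation*}
so that $\dot{g}^{ik}g_{kj} = -\, g^{ik}\dot{g}_{kj}$. Next I would free the lower index by contracting both sides with $g^{jl}$, which gives
\begin{equation*}
	\dot{g}^{il} = -\, g^{ik} g^{jl}\, \dot{g}_{kj};
\end{equation*}
in words, the time derivative of the inverse metric is obtained from that of $g_{kj}$ by raising both indices with $g$ and reversing the sign. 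Finally I would substitute the previous lemma \eqref{ev eq gij} for $\dot{g}_{kj}$ and invoke the definition $h^{ij} \equiv g^{ik}g^{jl}h_{kl}$ recorded in the statement, arriving at the asserted evolution equation $\dot{g}^{ij} = -2H h^{ij}$.

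I do not anticipate any genuine obstacle here: the only point that deserves an explicit remark is the justification for differentiating the contraction identity termwise, namely the metric-compatibility of $\frac{D}{\partial t}$ and the parallelism of $\delta^i_j$; once that is in hand the remainder is routine index bookkeeping, and the corollary follows in a couple of lines.
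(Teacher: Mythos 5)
Your argument is exactly the paper's: differentiate the identity $g^{ik}g_{kj}=\delta^i_j$ in $t$, apply the Leibniz rule, contract with $g^{jl}$, and substitute the evolution equation \eqref{ev eq gij} for $\dot{g}_{kj}$. The only point worth flagging is that the literal substitution of $\dot{g}_{kj}=2H^{-1}h_{kj}$ yields $\dot{g}^{ij}=-2H^{-1}h^{ij}$, so the factor $H$ in the stated formula (a slip the paper's own proof also carries) should be $H^{-1}$; apart from that the proof is complete and identical in route to the paper's.
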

\begin{proof}
	Since $\delta^i_j = g^{ik} g_{kj}$ and $0 = \de\! \delta^i_j / \de\! t$, we have
	\[
		\dot{g}^{ik}g_{kj} = -g^{ik}\dot{g}_{kj}.
	\]
	Now contracting both sides with $g^{jl}$ and plugging in \eqref{ev eq gij} for $\dot{g}_{kj}$ we have
	\[
		\dot{g}^{il} = -2g^{ik}g^{jl}Hh_{kj} = -2Hh^{il}.
	\]
\end{proof}
\begin{lem}\label{ev eq measure}
	The evolution of the induced measure $\de\!\mu$ satisfies
	\begin{equation} \label{ev eq g}
		\dt{}(\de\!\mu) = \de\!\mu.
	\end{equation}
\end{lem}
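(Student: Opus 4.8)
The plan is to work in the fixed local coordinates $\{y^i\}$ on $M^n$, in which the induced measure reads $\de\!\mu = \sqrt{\det g_{ij}}\;\de\!y^1\wedge\dots\wedge\de\!y^n$. Since the coordinate chart on $M^n$ does not move with the flow, the factor $\de\!y^1\wedge\dots\wedge\de\!y^n$ is independent of $t$, so all the time dependence sits in $\sqrt{\det g_{ij}}$ and it suffices to compute $\dt{}\sqrt{\det g_{ij}}$.

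First I would invoke Jacobi's formula for the derivative of a determinant, which gives
\[
	\dt{}\det g_{ij} = (\det g_{ij})\, g^{ij}\dot{g}_{ij},
\]
and hence $\dt{}\sqrt{\det g_{ij}} = \tfrac{1}{2}\sqrt{\det g_{ij}}\; g^{ij}\dot{g}_{ij}$. Next I would substitute the evolution equation \eqref{ev eq gij} for the induced metric, $\dot{g}_{ij} = 2H^{-1}h_{ij}$, to obtain $g^{ij}\dot{g}_{ij} = 2H^{-1} g^{ij}h_{ij} = 2H^{-1}H = 2$, using that the trace of the Weingarten map $h^i_j = g^{ik}h_{kj}$ with respect to $g$ is precisely the mean curvature $H$. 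Combining these yields $\dt{}\sqrt{\det g_{ij}} = \sqrt{\det g_{ij}}$, whence $\dt{}(\de\!\mu) = \de\!\mu$.

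There is essentially no obstacle here; the only point requiring a word of care is the legitimacy of differentiating under the measure, i.e. that the coordinate volume form $\de\!y^1\wedge\dots\wedge\de\!y^n$ is genuinely $t$-independent — this is exactly the reason for parametrising the flow over the fixed manifold $M^n$ as in \eqref{graph hs}, so that only the metric coefficients $g_{ij}(t)$ carry the evolution. One could alternatively phrase the whole computation invariantly via $\dt{}(\de\!\mu) = \tfrac12\,\trc_g(\dot g)\,\de\!\mu$, but the coordinate computation above is the most transparent.
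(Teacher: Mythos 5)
Your proof is correct and follows essentially the same route as the paper: Jacobi's formula for $\dt{}\sqrt{\det g_{ij}}$ combined with the evolution equation \eqref{ev eq gij} and the identity $g^{ij}h_{ij}=H$. The additional remark about the $t$-independence of the coordinate volume factor is a harmless elaboration of the same computation.
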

\begin{proof}
	In coordinates, 
	\[
		\de\!\mu = \sqrt{g}\de\!x,
	\]
where $g = \dtm g_{ij}$. {\em Jacobi's formula} for taking the derivative of a determinant and the product rule give
\begin{equation}
	\dt{} \sqrt{g} = \frac{1}{2}( g)^{-\frac{1}{2}} g g^{ij} \frac{\partial g_{ij}}{\partial t} = \sqrt{g}H^{-1} g^{ij}h_{ij} = \sqrt{g}.
\end{equation}
\end{proof}
\begin{cor}
	For a surface evolving according to \eqref{eq}, as long as the flow exists, the following holds
	\[
		|M_t| = |M_0|\e^t,
	\] where $M_0$ is the initial surface.
\end{cor}
\begin{thm}
	The second fundamental form evolves according to
	\begin{equation}\label{ev eq hij}
		\begin{split}
			\dt{h_{ij}} = & (H^{-1})_{;ij}  + H^{-1}\left( h^m_i h_{mj} - \RN(\nu, x_i, \nu, x_j)\right)\\
						= &2 H_i H_j H^{-3} - H_{ij} H^{-2} + H^{-1}\left( h^m_i h_{mj} - \RN(\nu, x_i, \nu, x_j)\right).
		\end{split}
	\end{equation}
\end{thm}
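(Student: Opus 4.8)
The plan is to differentiate the contracted Gauss formula $h_{ij}=-\gamma(x_{ij},\nu)$ — obtained by taking the $\gamma$-inner product of \eqref{gauss formula} with $\nu$ — and to feed in the evolution equations already recorded for the unit normal and the coordinate tangent fields. Since $\tfrac{D}{\partial t}$ is compatible with $\gamma$,
\[
	\dt{h_{ij}} \;=\; -\gamma\!\left(\tfrac{D}{\partial t}x_{ij},\,\nu\right)-\gamma\!\left(x_{ij},\,\tfrac{D}{\partial t}\nu\right),
\]
and the second term vanishes at once: $x_{ij}=-h_{ij}\nu$ is purely normal by \eqref{gauss formula}, whereas $\tfrac{D}{\partial t}\nu=H^{-2}H^{;k}x_k$ is purely tangential by \eqref{ev eq nu}. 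So the whole computation reduces to extracting the normal component of $\tfrac{D}{\partial t}x_{ij}$.

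To that end I would write $x_{ij}=\tfrac{D}{\partial y^i}x_j-{}^{g}\tensor{\varGamma}{^k_i_j}x_k$ and differentiate in $t$. The piece $\bigl(\partial_t\,{}^{g}\tensor{\varGamma}{^k_i_j}\bigr)x_k$ is tangential and hence invisible to $\gamma(\,\cdot\,,\nu)$; the piece ${}^{g}\tensor{\varGamma}{^k_i_j}\,\dot x_k$ enters only through $\gamma(\dot x_k,\nu)=(H^{-1})_k$ and is exactly what upgrades the forthcoming bare partials to a covariant Hessian; and the substantial piece is $\tfrac{D}{\partial t}\tfrac{D}{\partial y^i}x_j$. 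Here one must commute the time- and space-covariant derivatives along the map $x$: because $\partial_t$ and $\partial_i$ push forward to commuting vector fields (Remark \ref{commute}), this commutator is precisely the ambient curvature acting on $x_j$, so it contributes, on top of $\tfrac{D}{\partial y^i}\dot x_j$, an extra term $H^{-1}\RN(\nu,x_i)x_j$ whose sign and argument order are fixed by the convention \eqref{commutator vec}. Into $\dot x_j$ one then substitutes \eqref{ev basis}, $\dot x_j=(H^{-1}\nu)_j=(H^{-1})_j\nu+H^{-1}h^k_j x_k$ — the split coming from the Weingarten equation \eqref{weingarten eq} — and a final $\tfrac{D}{\partial y^i}$-differentiation, with $\tfrac{D}{\partial y^i}\nu=h^m_i x_m$ and $\tfrac{D}{\partial y^i}x_k=-h_{ik}\nu+{}^{g}\tensor{\varGamma}{^m_i_k}x_m$ supplied by \eqref{weingarten eq} and \eqref{gauss formula}, makes the normal component fully explicit.

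It then remains to take the $\gamma(\,\cdot\,,\nu)$ component and collect terms. The tangential pieces die; the $(H^{-1})_j\nu$ term leaves the bare second partial $\partial_i\partial_j(H^{-1})$, which assembles with the $-{}^{g}\tensor{\varGamma}{^k_i_j}(H^{-1})_k$ term into the covariant Hessian $(H^{-1})_{;ij}$; the $H^{-1}h^k_j x_k$ term, after one $\tfrac{D}{\partial y^i}$, produces the quadratic contribution $H^{-1}h^m_i h_{mj}$ through the normal part $-h_{ik}\nu$ of $\tfrac{D}{\partial y^i}x_k$; and the commutator term delivers an ambient curvature contribution $\RN(\nu,x_i,x_j,\nu)$, which the antisymmetry of $\RN$ in its last pair of slots rewrites as the $\RN(\nu,x_i,\nu,x_j)$ appearing in the statement. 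Reassembling — with the overall sign carried by $h_{ij}=-\gamma(x_{ij},\nu)$ — and expanding $(H^{-1})_{;ij}=2H_iH_jH^{-3}-H_{ij}H^{-2}$ then yields both displayed forms of \eqref{ev eq hij}. The one genuine obstacle is bookkeeping discipline: at every step keeping track of which index is differentiated with the induced connection $\nabla$ and which with the ambient connection $\nablaN$, and pinning down the sign and slot placement of the commutator curvature term from \eqref{commutator vec}. With that care in place the remainder is a routine, if somewhat lengthy, expansion — entirely in the spirit of the Simons-type identity \eqref{hij der comm} derived above.
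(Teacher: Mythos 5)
Your route is the same one the paper takes: differentiate the contracted Gauss formula $h_{ij}=-\gamma(x_{ij},\nu)$ in time, discard $\gamma(x_{ij},\dot\nu)$ because $x_{ij}$ is normal while $\dot\nu$ is tangential, commute the time derivative past the spatial one using the fact that the flows of $H^{-1}\nu$ and $x_i$ commute (Remark \ref{commute}) to pick up the ambient curvature term, insert $\dot x_j=(H^{-1}\nu)_j=(H^{-1})_j\nu+H^{-1}h^k_jx_k$, and extract the normal component via \eqref{gauss formula} and \eqref{weingarten eq}. The only difference from the printed proof is cosmetic: you split $x_{ij}$ into $\nablaN_{x_i}x_j$ minus the ${^g\tensor{\varGamma}{^k_i_j}}x_k$ part and do the Christoffel bookkeeping by hand, where the paper works directly with $\nablaN$ and quotes the formula for $\nu_{ij}$.

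The one genuine problem is the sign of the Hessian in your final reassembly. Carrying the overall minus from $h_{ij}=-\gamma(x_{ij},\nu)$ through your own outline, the normal component of $\tfrac{D}{\partial t}x_{ij}$ is $\partial_i\partial_j(H^{-1})-{^g\tensor{\varGamma}{^k_i_j}}(H^{-1})_k-H^{-1}h^k_jh_{ik}+(\text{curvature})$, so the computation lands on $\dot h_{ij}=-(H^{-1})_{;ij}+H^{-1}\bigl(h^m_ih_{mj}-\RN(\nu,x_i,\nu,x_j)\bigr)$, i.e.\ $H^{-2}H_{ij}-2H^{-3}H_iH_j+\dots$, which is exactly the last line of the paper's own computation. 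That sign is the consistent one: tracing it with $\dot g^{ij}=-2H^{-1}h^{ij}$ reproduces Corollary \ref{ev eq H}, and it checks on geodesic spheres in a space form; the displayed theorem, with $+(H^{-1})_{;ij}$ (equivalently $2H_iH_jH^{-3}-H_{ij}H^{-2}$), carries a sign typo. So your closing claim that the expansion ``yields both displayed forms'' of \eqref{ev eq hij} cannot be literally true — as written it conceals a sign slip, either yours or the statement's, and a careful referee would ask you to exhibit the reassembly explicitly. Relatedly, be explicit that rewriting $\RN(\nu,x_i,x_j,\nu)$ as $\RN(\nu,x_i,\nu,x_j)$ costs a minus sign from antisymmetry in the last pair; it is precisely that minus, combined with the overall one, that produces the $-H^{-1}\RN(\nu,x_i,\nu,x_j)$ term, and glossing over such flips is exactly how the Hessian sign got lost.
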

\begin{proof}
	We utilise \eqref{gauss formula}, \eqref{ev eq nu}, Remark \ref{commute} and a myriad of additional tricks to derive the following 
\begin{equation}
	\begin{split}
		\dt{h_{ij}}  =&  - \dt{} \gamma(x_{ij}, \nu) \\
		= & - \gamma(\nablaN_{H^{-1}\nu} \nablaN_{x_j} x_i, \nu) - \gamma(x_{ij}, \dot{\nu})\\
		= &- \gamma(\nablaN_{x_j}\nablaN_{H^{-1}\nu} x_i, \nu) - H^{-1}\gamma(\RN(x_i,\nu)x_j,\nu)+\gamma(\nablaN_{[x_i, H^{-1}\nu]}x_j,\nu)\\
		= &-\gamma(\nablaN_{x_j}\nablaN_{x_i}(H^{-1}\nu), \nu) - H^{-1}\gamma(\RN(x_i,\nu)x_j,\nu)\\
		= &-(H^{-1})_{ij} - H^{-1}\gamma(\nu,\nu_{ij}) - H^{-1}\gamma(\RN(x_i,\nu)x_j,\nu)\\
		= &-(H^{-1})_{ij} + H^{-1} \left(h_i^k h_{kj} - \RN(x_i,\nu,x_j,\nu) \right )
	\end{split}
\end{equation}
\end{proof}

\begin{cor}
	The Weingarten map $h^{i}_j$  satisfies
\begin{equation}
	\begin{split}\label{ev eq weing}
		\dt{h^i_j} = & -(H^{-1})^i_j - H^{-1}\left(h^i_k h^k_j + \RN(x_k,\nu,x_j,\nu)g^{ki} \right )    \\
				 = &2 H^i H_j H^{-3} - H^i_{j} H^{-2} -  H^{-1}\left(h^i_k h^k_j + \RN(x_k,\nu,x_j,\nu)g^{ki} \right ) 
	\end{split}
\end{equation}
\end{cor}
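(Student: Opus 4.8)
The plan is to derive the evolution equation for the Weingarten map $h^i_j$ directly from the already‑established evolution equations for $h_{ij}$ and for the inverse metric $g^{ij}$, using the Leibniz rule
\[
	\dt{h^i_j} = \dt{}\left( g^{ik} h_{kj} \right) = \dot{g}^{ik} h_{kj} + g^{ik} \dot{h}_{kj}.
\]
First I would substitute \eqref{ev eq inverse g} into the first term, giving $\dot{g}^{ik} h_{kj} = -2H h^{il} h_{lk} h^k_j \cdot$ — more precisely $-2H g^{il}g^{km}h_{lm}h_{kj} = -2H h^{ik}h_{kj}$, but one must be careful with index placement; writing everything with $g^{ik}$ out front and all $h$'s with both indices down is the cleanest bookkeeping. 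Then I would substitute the first form of \eqref{ev eq hij} into the second term, obtaining
\[
	g^{ik}\dot h_{kj} = g^{ik}(H^{-1})_{;kj} + H^{-1}\left( g^{ik} h^m_k h_{mj} - g^{ik}\RN(\nu,x_k,\nu,x_j)\right)
	= (H^{-1})^i_{\;j} + H^{-1}\left( h^i_m h^m_j - \RN(x_k,\nu,x_j,\nu)g^{ki}\right),
\]
using the symmetries of $\RN$ to rewrite $\RN(\nu,x_k,\nu,x_j)=\RN(x_k,\nu,x_j,\nu)$ and $g^{ik}h^m_k = h^{im} = g^{im}\cdots$.

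**Combining.** Adding the two contributions, the term $-2H h^{ik}h_{kj} = -2H h^i_k h^k_j$ from $\dot g^{ik}h_{kj}$ combines with the $+H^{-1}h^i_m h^m_j$ coming from $\dot h_{kj}$. Here I need the evolution‑equation convention: in this flow $\dt{}=H^{-1}\nablaN_\nu$, so the factor accompanying the Simons‑type quadratic term is $H^{-1}$, not $H$; but the $-2Hh^i_kh^k_j$ is genuinely present because $\dot g^{ik}=-2Hh^{ik}$ as in \eqref{ev eq inverse g}. I would double‑check that $-2H h^i_k h^k_j + H^{-1} h^i_m h^m_j$ does \emph{not} collapse — and indeed the stated result keeps a single $-H^{-1} h^i_k h^k_j$ term, which means the bookkeeping must actually produce coefficient $(-2H\cdot\text{something})$; the resolution is that $\dot g^{ik}=-2Hh^{ik}$ should be read with the flow speed $H^{-1}$ already absorbed, i.e. effectively $\dot g^{ik} = -2H^{-1}h^i_kh^{kj}$ after lowering, so the two quadratic terms add to $-2H^{-1}h^i_kh^k_j + H^{-1}h^i_kh^k_j = -H^{-1}h^i_kh^k_j$. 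This sign‑and‑factor reconciliation is the one genuinely delicate point, and I would verify it against \eqref{ev eq gij}: since $\dot g_{ij}=2H^{-1}h_{ij}$ carries $H^{-1}$, differentiating $\delta^i_j=g^{ik}g_{kj}$ forces $\dot g^{ik}g_{kj}=-g^{ik}\dot g_{kj}=-2H^{-1}g^{ik}h_{kj}$, hence $\dot g^{ik}=-2H^{-1}h^{ik}$; then $\dot g^{ik}h_{kj}=-2H^{-1}h^i_kh^k_j$, and combined with the $+H^{-1}h^i_kh^k_j$ from the $h_{ij}$ evolution we get exactly $-H^{-1}h^i_kh^k_j$ as asserted. (Note that \eqref{ev eq inverse g} as printed in the excerpt omits this $H^{-1}$; this is one of the suppressed conventions and I would flag it or silently use the consistent version.)

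**Finishing and the second form.** The $(H^{-1})^i_{\;j}$ term from $g^{ik}(H^{-1})_{;kj}$ is raised simply because $g$ is parallel with respect to $\nabla$, so $g^{ik}(H^{-1})_{;kj} = (H^{-1})^{;i}_{\;\;;j}$. This yields the first line of \eqref{ev eq weing}. For the second line I would expand $(H^{-1})_{;ij}$ via the chain rule: $(H^{-1})_{;i} = -H^{-2}H_{;i}$, so $(H^{-1})_{;ij} = -H^{-2}H_{;ij} + 2H^{-3}H_{;i}H_{;j}$, raising the first index with $g^{ik}$ gives $-H^{-2}H^i_{\;j} + 2H^{-3}H^iH_j$, which is exactly the $2H^iH_jH^{-3} - H^i_jH^{-2}$ displayed. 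The remaining terms are already in the required form after the $\RN$‑symmetry rewrite.

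**Main obstacle.** The only real obstacle is bookkeeping: keeping track of the $H^{-1}$ flow‑speed factors (which of \eqref{ev eq gij}, \eqref{ev eq inverse g}, \eqref{ev eq hij} already include the $H^{-1}$ and which do not), the index positions when raising with $g$, and the $\RN$ symmetries $\RN(\nu,x_k,\nu,x_j)=\RN(x_k,\nu,x_j,\nu)$. There is no analytic content — it is a two‑line contraction — so the proof I would write is just "differentiate $h^i_j = g^{ik}h_{kj}$ using \eqref{ev eq inverse g} and \eqref{ev eq hij}, then raise indices and apply the symmetries of $\RN$," with perhaps one displayed line exhibiting the cancellation $-2H^{-1}h^i_kh^k_j + H^{-1}h^i_kh^k_j = -H^{-1}h^i_kh^k_j$.
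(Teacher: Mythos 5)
Your proposal is correct and follows exactly the paper's own argument: differentiate $h^i_j = g^{ik}h_{kj}$ via the Leibniz rule and substitute \eqref{ev eq inverse g} and \eqref{ev eq hij}, then use the pair symmetry of $\RN$. Your careful reconciliation of the flow-speed factor — noting that consistency with \eqref{ev eq gij} forces $\dot g^{ik} = -2H^{-1}h^{ik}$ rather than the $-2Hh^{ik}$ printed in \eqref{ev eq inverse g}, so that the quadratic terms combine to $-H^{-1}h^i_kh^k_j$ — is precisely the bookkeeping the paper leaves implicit, and it correctly identifies the typo there.
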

\begin{proof}
Since
	\[
		\dot{h}^i_j = \dt{}(g^{ik}h_{kj}) = \dot{g}^{ik}h_{kj} + g^{ik} \dot{h}_{kj},
	\]
we can use \eqref{ev eq inverse g} and \eqref{ev eq hij} to yield the result.
\end{proof}
\begin{cor} \label{ev eq H}
The mean curvature evolves according to
	\begin{equation}
	\begin{split}
		\dt{H} = & -\Delta_{M_t}(H^{-1}) - H^{-1}\left( |A|_g^2 + \RicN (\nu) \right) \\
			= & H^{-2} \Delta_{M_t} H -2H^{-3} |\nabla H|^2_g - H^{-1}\left( |A|_g^2 + \RicN (\nu) \right).
	\end{split}
	\end{equation}
\end{cor}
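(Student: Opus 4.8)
The plan is to derive the evolution equation for $H$ directly from the already-established evolution equation for the Weingarten map \eqref{ev eq weing} by taking a trace, exactly as the metric identities were traced in the preceding corollaries. Concretely, since $H = h^i_i$ and tracing over the index pair commutes with the operator $\dt{}$ only up to the evolution of the inverse metric, I would write $\dt{H} = \dt{}(g^{ij} h_{ij}) = \dot{g}^{ij} h_{ij} + g^{ij} \dot{h}_{ij}$, substitute \eqref{ev eq inverse g} for $\dot{g}^{ij}$ and \eqref{ev eq hij} for $\dot{h}_{ij}$, and collect terms.

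The first term contributes $-2H h^{ij} h_{ij} = -2H |A|^2_g$. The second term contributes $g^{ij}\left[ (H^{-1})_{;ij} + H^{-1}(h^m_i h_{mj} - \RN(\nu,x_i,\nu,x_j)) \right]$. Here I would recognise $g^{ij}(H^{-1})_{;ij} = \Delta_{M_t}(H^{-1})$, recognise $g^{ij} h^m_i h_{mj} = |A|^2_g$, and recognise $g^{ij}\RN(\nu,x_i,\nu,x_j) = \RicN(\nu)$ — this last identity being the definition \eqref{ricci} of the Ricci curvature contracted against the pushforward frame, using that $\{x_i\}$ together with $\nu$ spans $T_{x(p)}N$ and $\RN(\nu,\nu,\nu,\cdot) = 0$. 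Adding the two contributions gives $\dt{H} = -\Delta_{M_t}(H^{-1}) - 2H|A|^2_g + H^{-1}|A|^2_g - H^{-1}\RicN(\nu)$. At this point I would need to reconcile the $-2H|A|^2_g$ term with the claimed formula, which has only $-H^{-1}|A|^2_g$; I expect that the $-2H|A|^2_g$ piece is absorbed once one expands $\Delta_{M_t}(H^{-1})$ in terms of $\Delta_{M_t} H$, since $(H^{-1})_{;ij} = -H^{-2} H_{;ij} + 2H^{-3} H_i H_j$ and the cross-terms from the trace of the Weingarten evolution must be re-examined — alternatively, and more cleanly, I would take the trace of the \emph{second} line of \eqref{ev eq weing} and simply collect, obtaining $\dt{H} = 2H^{-3}|\nabla H|^2_g - H^{-2}\Delta_{M_t} H - H^{-1}(|A|^2_g + \RicN(\nu))$, then note the sign convention: the claimed identity has $+H^{-2}\Delta_{M_t} H - 2H^{-3}|\nabla H|^2_g$, which is exactly $-\Delta_{M_t}(H^{-1})$ written out, since $\Delta_{M_t}(H^{-1}) = -H^{-2}\Delta_{M_t} H + 2H^{-3}|\nabla H|^2_g$.

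So the cleanest route is: trace \eqref{ev eq weing} over $i$ and $j$, use $g^{ki}\RN(x_k,\nu,x_j,\nu)$ contracted with the remaining $\delta^j$-structure to produce $\RicN(\nu)$ via \eqref{ricci} and the vanishing of $\RN$ with two $\nu$ slots, use $h^i_k h^k_i = |A|^2_g$, and then recognise $2H^i H_i H^{-3} - H^i_i H^{-2} = -\Delta_{M_t}(H^{-1})$ by the chain rule for the Laplacian. The main obstacle — really a bookkeeping point rather than a genuine difficulty — will be handling the contraction $g^{ij}\RN(\nu, x_i, \nu, x_j)$ correctly: one must be careful that the Ricci trace \eqref{ricci} is taken over an orthonormal frame, so that when the frame $\{x_i\}$ is not orthonormal the contraction with $g^{ij}$ is precisely what supplies the missing normalisation, and that the $\nu$-$\nu$ term $\RN(\nu,\nu,\nu,\nu) = 0$ can be freely added to complete the trace over a full frame of $T_{x(p)}N$. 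Everything else is the routine trace computation already rehearsed in the corollaries for $\dot g_{ij}$ and $\dot h^i_j$.
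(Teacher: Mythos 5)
Your overall strategy --- trace the Weingarten-map evolution \eqref{ev eq weing} over $i$ and $j$, identify $h^i_k h^k_i = |A|^2_g$ and $g^{ki}\RN(x_k,\nu,x_i,\nu) = \RicN(\nu)$ (completing the frame with $\nu$, whose contribution vanishes), and recognise the leftover second-derivative terms as $-\Delta_{M_t}(H^{-1})$ --- is exactly the paper's proof, which consists of the single instruction ``take the trace of \eqref{ev eq weing}''. Your observation that working with the mixed tensor $h^i_j$ lets the trace commute with $\dt{}$, so that no $\dot g^{ij}$ term intrudes, is precisely why this route is cleaner than your first attempt via $H = g^{ij}h_{ij}$.

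However, your sign bookkeeping contains a genuine error. Tracing the \emph{printed second line} of \eqref{ev eq weing} gives $2H^{-3}|\nabla H|^2_g - H^{-2}\Delta_{M_t}H - H^{-1}\left(|A|^2_g + \RicN(\nu)\right)$, and the first two terms equal $+\Delta_{M_t}(H^{-1})$, not $-\Delta_{M_t}(H^{-1})$: the identity you invoke in your final paragraph, $2H^iH_iH^{-3} - H^i_iH^{-2} = -\Delta_{M_t}(H^{-1})$, contradicts the correct chain rule $\Delta_{M_t}(H^{-1}) = -H^{-2}\Delta_{M_t}H + 2H^{-3}|\nabla H|^2_g$ that you yourself write down. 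This mismatch is not a ``sign convention''; it is a misprint in the paper, whose second line of \eqref{ev eq weing} is inconsistent with its own first line. The trace must be taken of the first line, $-(H^{-1})^i_j - H^{-1}\left(h^i_kh^k_j + \RN(x_k,\nu,x_j,\nu)g^{ki}\right)$, which yields the corollary at once, the corollary's own second line being merely the chain-rule expansion of $-\Delta_{M_t}(H^{-1})$. Likewise, in your first route the stray $-2H|A|^2_g$ cannot be ``absorbed'' by expanding the Laplacian: it comes from the misprint in \eqref{ev eq inverse g} (its proof actually gives $\dot g^{ij} = -2H^{-1}h^{ij}$), and the first term of \eqref{ev eq hij} should carry the sign $-(H^{-1})_{;ij}$ obtained at the end of its proof; with these corrections $-2H^{-1}|A|^2_g + H^{-1}|A|^2_g = -H^{-1}|A|^2_g$ and both of your routes close up without any patching. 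As written, though, your argument forces agreement with the stated corollary by asserting a false identity, so the sign discrepancy remains an unresolved gap.
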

\begin{proof}
	Take the trace of \eqref{ev eq weing}. 
\end{proof}
\begin{remark}
	By application of the maximum principle in the previous equation, in light of the negative sign on the $|A|^2$ term, we deduce that the mean curvature $H$ is uniformly bounded from above in terms of the ambient Ricci curvature and the initial embedding.
\end{remark}

\newpage
\section{long term behaviour}
In order to show that the flow \eqref{eq} exists for all times $t > 0$, we need to bound the mean curvature with an exponential factor, $H \e^{\lambda t}$, from above and below by positive constants. In the process, we shall also show that then the surfaces $M_t$ remain as graphs throughout the evolution (see Theorem \ref{graphical nature}) by deriving an evolution equation for the quantity $\chi = \gamma(X,\nu)^{-1}$ where $X = u \partial / \partial r$. \\
\\
In \S \ref{convergence} we rescale the embedded hypersurfaces with a factor of $\e^{-t/n}$, as already hinted at earlier on. \\
\\
Throughout this section we shall denote with $c$ and $C$ arbitrary positive constants that can, on occasion, vary from line to line (as contradictory as this sounds).

\subsection{graphical nature} \label{graphical nature}
The quantities that we will be dealing with reside on the evolving surfaces $M_t$ and as such, we will generally be dealing with the basis $\{x_i\}_{i=1,...,n}$ of $TM_t$. Indeed, all covariant derivatives will be carried out with respect to $\nabla(g)$ and indices will be raised and lowered with respect to either $g_{ij}$ or $\gamma_{ab}$. The context should be clear.
\begin{lem} Let $\chi = \gamma(X,\nu)^{-1}$ where $X = u \partial / \partial r$. Then
	\begin{itemize}
	\item[{\em (i)}]
		\begin{equation}
			X_{;i} = x_i \equiv u_i e_0 + e_i,
		\end{equation}
	\item[{\em (ii)}]
		\begin{equation}
			\gamma( X_{;ij}, \nu ) = \gamma( x_{ij}, \nu ) = -h_{ij}
		\end{equation}
	\item[{\em (iii)}]
		\begin{equation}
			\chi = vu^{-1}
		\end{equation}
	\end{itemize}
\end{lem}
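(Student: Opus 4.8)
The plan is to compute each of the three identities directly from the definitions, working in the extended coordinate basis $\{e_a\}_{a=0,\dots,n}$ where $e_0 = \partial/\partial r$. For (i), recall $X = u\,e_0$ and $x(p) = (u(p),p)$, so that $X$ is precisely the position-type vector field along the graph. I would compute the covariant derivative $X_{;i} = \nablaN_{x_i}(u\,e_0) = (D_i u)\,e_0 + u\,\nablaN_{x_i} e_0$, then use the Christoffel symbols \eqref{christoffel}: the only nonvanishing symbols with a lower $0$ index are $\tensor{\GammaN}{^k_0_j} = r^{-1}\delta^k_j$, and since $x_i = u_i e_0 + e_i$ has components $x_i^0 = u_i$, $x_i^j = \delta_i^j$, one finds $\nablaN_{x_i} e_0 = \tensor{\GammaN}{^c_a_0} x_i^a e_c = r^{-1} e_i$ (evaluated at $r = u$), hence $u\,\nablaN_{x_i}e_0 = e_i$. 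Combining, $X_{;i} = u_i e_0 + e_i = x_i$, which is exactly \eqref{xi}.

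For (ii), the first equality $\gamma(X_{;ij},\nu) = \gamma(x_{ij},\nu)$ is immediate from (i) once we are careful about what $X_{;ij}$ means: since $X_{;i} = x_i$ as a vector field along the embedding, differentiating again gives $X_{;ij} = \nabla_j x_i = x_{ij}$ in the sense of the Gauss formula (the tangential correction involving ${}^g\varGamma$ drops out upon pairing with $\nu$ anyway, so only the ambient normal part matters). The second equality $\gamma(x_{ij},\nu) = -h_{ij}$ is then nothing but the Gauss formula \eqref{gauss formula}, pairing both sides with $\nu$ and using $\gamma(\nu,\nu)=1$.

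For (iii), I would simply evaluate $\gamma(X,\nu)$ using the explicit form of the normal \eqref{norm}, namely $\nu = v^{-1}(e_0 - u^{-2}u^k e_k)$, together with the warped-product metric $\gamma = \de r^2 \oplus r^2\sigma$ restricted to $r = u$. Since $X = u\,e_0$, only the $e_0$-component of $\nu$ contributes: $\gamma(X,\nu) = \gamma(u\,e_0,\, v^{-1}e_0) = u v^{-1}\gamma(e_0,e_0) = u v^{-1}$, because $\gamma(e_0, e_k) = 0$ for $k \geq 1$ and $\gamma(e_0,e_0) = 1$. Inverting gives $\chi = \gamma(X,\nu)^{-1} = v u^{-1}$.

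None of the three steps presents a genuine obstacle; the only point requiring a little care is keeping track of the evaluation $r = u$ when using \eqref{christoffel} in part (i) (the Christoffel symbol $\tensor{\GammaN}{^k_0_j} = r^{-1}\delta^k_j$ becomes $u^{-1}\delta^k_j$ along the graph, and the factor of $u$ in $X = u\,e_0$ is exactly what cancels it), and making sure in part (ii) that the notation $X_{;ij}$ is interpreted as the second covariant derivative of the vector field $X$ along $x$, so that the identification with $x_{ij}$ and the subsequent appeal to the Gauss formula are legitimate. I would present (i), then (ii), then (iii) in that order, since (ii) uses (i).
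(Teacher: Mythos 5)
Your proposal is correct and follows essentially the same route as the paper: (i) by the warped-product Christoffel symbols \eqref{christoffel} applied to $\nablaN_{x_i}(u\,e_0)$ with $r=u$, (ii) from (i) together with the Gauss formula \eqref{gauss formula}, and (iii) from $\gamma(e_0,\nu)=v^{-1}$ via the explicit normal \eqref{norm}. Your extra care about interpreting $X_{;ij}$ and the tangential terms dropping out against $\nu$ is a sound reading of the step the paper leaves implicit.
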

\begin{proof} (i) is derived from \eqref{christoffel}
	\begin{equation}
		X_{;i} = X^a_b x^b_i = \left( \nablaN_{u_i e_0 + e_i} (u e_0) \right)^{\top} = u_i e_0 + u\tensor{\GammaN}{^a_i_0} e_a = u_i e_0 + \delta_i^k e_k = x_i.
	\end{equation}
(ii) follows (i) and \eqref{gauss formula}, and (iii) is easily seen as a result of $\gamma(\partial / \partial r, \nu) = v^{-1}$.
\end{proof}

The following evolution equation controls the graphical nature of the surfaces $M_t$.
\begin{lem} \label{graphical nature} Let $\chi  \equiv \gamma (X, \nu )^{-1}$ as above. Then $\chi$ satisfies the evolution equation
	\begin{equation}\label{graphical nature eq}
		\begin{split}
		\left( \dt{} - H^{-2}\Delta_{M_t} \right) \chi = -2\chi^{-1}H^{-2}|\nabla \chi|^2_g - \chi H^{-2} \left( |A|^2_g+ \RicN(\nu) \right).
		\end{split}
	\end{equation}
\end{lem}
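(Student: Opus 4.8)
The plan is to compute the evolution of $\chi = \gamma(X,\nu)^{-1}$ directly from the definition, using the evolution equations already established for the geometric quantities, and then to rewrite the resulting expression so that the Laplacian $\Delta_{M_t}\chi$ appears with the desired lower-order terms. I would first set $\psi \equiv \gamma(X,\nu) = \chi^{-1}$ and compute $\dot\psi$. Since $\dt{}$ is metric-compatible with $\gamma$, we have $\dot\psi = \gamma(\dot X,\nu) + \gamma(X,\dot\nu)$. For the second term, the evolution of $\nu$ is given by \eqref{ev eq nu}, $\dot\nu = H^{-2}H^{;k}x_k$, so $\gamma(X,\dot\nu) = H^{-2}H^{;k}\gamma(X,x_k)$. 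For the first term I would use that $X = u\,\partial/\partial r$ and that $\dot u = vH^{-1}$ together with the fact that $\partial/\partial r$ is a gradient field on the warped product, so $\nablaN_{H^{-1}\nu}(\partial/\partial r)$ can be controlled; alternatively, and more cleanly, one differentiates the identity $\psi = v u^{-1}$ from part (iii) of the preceding lemma, but it is probably easiest to keep things intrinsic and use $\dot X = \nablaN_{H^{-1}\nu}X$.

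Next I would compute the spatial Laplacian $\Delta_{M_t}\psi = g^{ij}\nabla_i\nabla_j\psi$. Differentiating $\psi = \gamma(X,\nu)$ twice and using $X_{;i} = x_i$ (part (i)), the Weingarten equation \eqref{weingarten eq} $\nu_i = h^k_i x_k$, and part (ii) $\gamma(X_{;ij},\nu) = -h_{ij}$, one gets
\begin{equation*}
	\nabla_i\psi = \gamma(x_i,\nu) + \gamma(X,\nu_i) = \gamma(X,h^k_i x_k) = h^k_i\,\gamma(X,x_k),
\end{equation*}
and a second covariant differentiation brings in $h^k_{i;j}$ (handled via the Codazzi equation \eqref{codazzi}), a term $h^k_i h_{kj}$ coming from $\gamma(x_j,\nu_k)$-type contributions, and the Hessian of $\nu$ via the Corollary to the Weingarten equation $\nu_{ij} = h^k_{i;j}x_k - h^k_i h_{kj}\nu$. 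Tracing with $g^{ij}$ and substituting the Codazzi identity to convert $g^{ij}h^k_{i;j}$ into $\nabla^k H$ plus an ambient curvature term, one obtains $\Delta_{M_t}\psi$ in terms of $\psi$, $|A|^2_g$, $\RicN(\nu)$, the gradient pairing $\gamma(X,x_k)$, and $\nabla H$.

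The third step is the bookkeeping: combine $\dot\psi$ and $H^{-2}\Delta_{M_t}\psi$ so that the $\nabla H$ terms cancel and one is left with $(\dt{} - H^{-2}\Delta_{M_t})\psi = H^{-2}\psi(|A|^2_g + \RicN(\nu))$ plus possibly a gradient term. Finally I convert from $\psi$ to $\chi = \psi^{-1}$ using
\begin{equation*}
	\left(\dt{} - H^{-2}\Delta_{M_t}\right)\chi = -\chi^2\left(\dt{} - H^{-2}\Delta_{M_t}\right)\psi - 2\chi^{-1}H^{-2}|\nabla\chi|^2_g,
\end{equation*}
where the extra $|\nabla\chi|^2_g$ term arises precisely because $\Delta(\psi^{-1}) = -\psi^{-2}\Delta\psi + 2\psi^{-3}|\nabla\psi|^2$ and $|\nabla\psi|_g = \chi^{-2}|\nabla\chi|_g$. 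Substituting the expression for $(\dt{} - H^{-2}\Delta_{M_t})\psi$ then yields \eqref{graphical nature eq}.

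The main obstacle I expect is the Laplacian computation in the second step: correctly tracking the ambient curvature terms $\RN(\nu, x_i, \nu, x_j)$ that appear when commuting derivatives and when using $\nu_{ij}$, and verifying that the pairing $\gamma(X, x_k)$ drops out entirely after combining with $\dot\psi$ (it should, since $\chi$ depending on $X$ through $\gamma(X,\cdot)$ is what makes the flow "see" the radial structure, but the final evolution equation must be curvature-driven). A useful consistency check is to specialize to the round sphere case $M^n = S^n$, $\RicN(\nu) = 0$, where \eqref{graphical nature eq} should reduce to the known evolution equation for the support-function-type quantity in Gerhardt's and Urbas's work.
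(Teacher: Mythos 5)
Your proposal is correct and, up to working with $\psi=\gamma(X,\nu)$ and inverting only at the end rather than differentiating $\chi=\gamma(X,\nu)^{-1}$ directly as the paper does, it is essentially the same argument: the same ingredients (the evolution \eqref{ev eq nu} of $\nu$, the identity $X_{;i}=x_i$, Gauss--Weingarten, Codazzi, and the cancellation of the $\nabla H$ pairings) drive both computations, and your reciprocal identity producing the extra $-2\chi^{-1}H^{-2}|\nabla\chi|^2_g$ term is exactly right. The one step you should make explicit is that the Codazzi curvature term, contracted with $g^{kl}\gamma(X,x_k)x_l = X-\gamma(X,\nu)\nu$, collapses to the $\RicN(\nu)$ term with nothing left over precisely because $\tensor{\RN}{_0_b_c_d}=0$, i.e. \eqref{curvature} kills $\RN(\nu,x_i,x_j,X)$ since $X$ is radial; with that in place the $\psi$-equation carries no gradient term at all, so your hedge ``plus possibly a gradient term'' resolves to zero and the conversion to $\chi$ gives \eqref{graphical nature eq} exactly.
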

\begin{proof} We begin by differentiating $\chi$ with respect to $t$
	\begin{equation}
		\dot{\chi} = -\chi^2 \big(  H^{-1} +  \gamma( X,H^{-2} \nabla H ) \big),
	\end{equation}
where we have used \eqref{ev eq nu}. Next we take first and second derivatives of $\chi$ in the directions $x_i$ and $x_j$
	\begin{equation}
		\chi_{;i} = -\chi^2 \big(  \gamma( x_i, \nu )+ \gamma (X, \nu_i ) \big),
	\end{equation}
and
	\begin{align} \label{chi ij}
\notag		\chi_{ij} = & 2\chi^3  \big (\gamma( x_j, \nu )+ \gamma( X, \nu_j ) \big ) \big (\gamma( x_i, \nu )+ \gamma( X, \nu_i ) \big ) \\
\notag		&-\chi^2 \big ( \gamma( x_{ij}, \nu ) +  \gamma( x_j, \nu_i )+ \gamma( x_i, \nu_j )+ \gamma( X, \nu_{;ij}) \big )\\ 
\notag		= &2\chi^{-1} \chi_i \chi_j - \chi^2 h_{ij} - \chi^2h^k_{i;j} \gamma( X,x_k )+ \chi h^k_i h_{kj}\\
\notag		= & 2\chi^{-1} \chi_i \chi_j - \chi^2 h_{ij} + \chi h^k_i h_{kj} \\  
		& - \chi^2 \big ( h^{;k}_{ij} \gamma( X,x_k ) + \RN(\nu, x_i, x_l, x_j) g^{kl}\gamma(X,x_k)\big ),
	\end{align}
where we have used \eqref{codazzi} contracted with $g^{kl}$ to get the curvature term. Using the fact that
\[ g^{kl}\gamma( X, x_k)x_l  = X^{\top} = X - \gamma( \nu,X )\nu,\]
we come to the conclusion that
\begin{align}\label{rn}
\notag	\RN( \nu, x_i, x_l, x_j) g^{kl}\gamma(x,x_k) = & \RN(\nu, x_i X, x_j) - \RN(\nu, x_i, \nu, x_j) \chi^{-1} \\
\notag							= &- \RN(\nu, x_i, \nu, x_j) \chi^{-1},
\end{align}
since $X = u \partial / \partial r$ and in light of  \eqref{curvature}. Contracting $\chi_{ij}$ with $H^{-2} g^{ij}$ and subtracting it from $\dot{\chi}$ delivers
\begin{equation}
	\left( \dt{} - H^{-2}\Delta_{M_t} \right) \chi = -2\chi^{-1}H^{-2}|\nabla \chi|^2_g - \chi H^{-2} \left( |A|^2_g+ \RicN(\nu) \right),
\end{equation}
as claimed.
\end{proof}

\begin{thm} \label{H bound}
	There exist positive constants $c$ and $C$, depending only on $u_0, Du_0, D^2 u_0$ and $n$ for which during the evolution process up to time $T$, the following holds 
	\begin{equation}
		c\leq H\e ^{t/n} \leq C.
	\end{equation}
\end{thm}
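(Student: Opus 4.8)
The plan is to control the scalar quantity $F^{-1}=\chi H^{-1}$ (which is just $\dot\varphi$) directly, obtaining $t$-independent two-sided positive bounds, and then to convert these into the claimed bounds on $H\e^{t/n}$ using the estimates on $v$ and on $u\e^{-t/n}$ already in hand.

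First I would derive an evolution equation for $\psi\equiv\chi H^{-1}$. Recalling from \eqref{F} and the identity $\chi=vu^{-1}$ that $\psi=(Huv^{-1})^{-1}=F^{-1}$, I would combine Corollary \ref{ev eq H} (the evolution of $H$, written as $\dt{H}=H^{-2}\Delta_{M_t}H-2H^{-3}|\nabla H|^2_g-H^{-1}(|A|^2_g+\RicN(\nu))$) with \eqref{graphical nature eq} (the evolution of $\chi$), via $\dot\psi=\dot\chi H^{-1}-\chi H^{-2}\dot H$. The crucial point is that the two reaction terms $|A|^2_g+\RicN(\nu)$ cancel identically; after expressing $\Delta_{M_t}\psi$ through $\Delta_{M_t}\chi$, $\Delta_{M_t}H$ and gradients, substituting $\nabla\chi=H\nabla\psi+\psi\nabla H$, and a short computation, one is left with an equation of the shape
\begin{equation*}
	\left( \dt{} - H^{-2}\Delta_{M_t} \right)\psi = -2\psi^{-1}H^{-2}|\nabla\psi|^2_g - 2H^{-3}g(\nabla\psi,\nabla H),
\end{equation*}
whose right-hand side contains no zeroth-order term: since $\psi>0$ along the flow, both terms are transport terms, vanishing at any spatial critical point of $\psi$. (This is the geometric counterpart of \eqref{dt eq phi} for $\dot\varphi$, since $F^{-2}a^{ij}D_iD_j$ pulls back to $H^{-2}\Delta_{M_t}$.)

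Next I would apply the parabolic maximum and minimum principle to this equation --- equivalently, to \eqref{dt eq phi} on the fixed manifold $M^n$, exactly as in the lemma bounding $\dot\varphi$ --- to conclude $\min_{M^n}F^{-1}(0,\cdot)\le F^{-1}\le\max_{M^n}F^{-1}(0,\cdot)$ for all $t$. Since $H_0>0$ by hypothesis and $u_0,v_0>0$, the function $F^{-1}(0,\cdot)=v_0(H_0u_0)^{-1}$ is continuous and strictly positive on the compact $M^n$, so there are constants $0<c_1\le c_2$, depending only on $u_0, Du_0, D^2u_0, n$, with $c_1\le F\le c_2$. Finally, since $H=\chi F=vu^{-1}F$ by \eqref{F}, one has $H\e^{t/n}=Fv(u\e^{-t/n})^{-1}$; Lemma \ref{Dphi exp} gives $1\le v^2=1+|D\varphi|^2_\sigma\le 1+\sup_{M^n}|D\varphi_0|^2_\sigma$, and Remark \ref{u exp bound} gives $\inf_{M^n}u_0\le u\e^{-t/n}\le\sup_{M^n}u_0$, whence
\begin{equation*}
	\frac{c_1}{\sup_{M^n}u_0} \le H\e^{t/n} \le \frac{c_2\,(1+\sup_{M^n}|D\varphi_0|^2_\sigma)^{1/2}}{\inf_{M^n}u_0},
\end{equation*}
and both bounds depend only on $u_0, Du_0, D^2u_0$ and $n$ (note $D\varphi_0=u_0^{-1}Du_0$). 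One then takes $c$ and $C$ to be the left- and right-hand sides.

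The main obstacle is the first step: one must carry out the bookkeeping in combining the two evolution equations carefully enough to see both that the curvature/Weingarten reaction terms cancel and that the surviving gradient terms reassemble into transport terms --- it is precisely this structure that lets the maximum principle run with no sign hypothesis imposed on $\RicN(\nu)$. The positivity of $F$ (hence of $H$) throughout $[0,T^{\star})$, used implicitly above, is guaranteed by uniform parabolicity in the short-time existence theorem (Theorem \ref{short time existence}). Everything after the first step is the maximum principle together with estimates already established.
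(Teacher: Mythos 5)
Your proposal is correct and takes essentially the same route as the paper: both control $\chi H^{-1}=F^{-1}$ by combining \eqref{graphical nature eq} with Corollary \ref{ev eq H} (the $|A|^2_g+\RicN(\nu)$ terms do cancel), apply the maximum principle to bound it by its initial data, and then convert to bounds on $H\e^{t/n}$ via $\chi^{-1}=u/v$, the bound on $v$ from Lemma \ref{Dphi exp}, and \eqref{u bound}. The only cosmetic difference is that you write out the full parabolic equation for $\psi=\chi H^{-1}$ (equivalently invoke the already-proved bound on $\dot\varphi$), whereas the paper argues at a spatial maximum using the product-Laplacian identity and an $\epsilon t$ perturbation.
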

\begin{proof}
We shall prove only the lower bound here, the upper bound follows analogously with $\inf_M$ replacing $\sup_M$ in the following argument. We compute the evolution of $\chi H^{-1},$  using \eqref{graphical nature eq} and \eqref{ev eq H}. 
	\begin{equation}
		\begin{split}
			\dt{\ (\chi H^{-1})} = & \dot{\chi} H^{-1} - H^{-2}\chi \dot{H} \\
				= & H^{-3} \Delta \chi - 2 \chi^{-1} H^{-3}|\nabla \chi|^2_g + \chi H^{-2}\Delta(H^{-1})\\
				= & H^{-2} \left( H^{-1}\Delta\chi + \chi \Delta(H^{-1}) - 2\chi^{-1}H^{-1} |\nabla \chi|^2_g \right).
		\end{split}
	\end{equation}
Let $(t_0,x_0)\in M_t$ be an interior space-time point such that the following holds
	\begin{equation}
		(\chi H^{-1})(t_0,x_0) = \sup_{t\in [0,T)} \sup_{M_t} (\chi H^{-1}).
	\end{equation}
Then at $(t_0,x_0)$, 
\[ 0 = (\chi H^{-1})_i = \chi_i H^{-1} + \chi (H^{-1})_i,\]
and also
\[
	\dt{\ (\chi H^{-1})}(x_0) \geq 0
\]
Using these and the fact that \[\Delta(\eta\xi) = \Delta(\eta)\xi + \eta\Delta(\xi) + 2 \nabla \eta \cdot \nabla \xi,\] for arbitrary real-valued functions $\eta$ and $\xi$, we get
\[ 0 \leq \dt{\ (\chi H^{-1})} = H^{-2} \Delta(\chi H^{-1}) \leq 0,\] since $(t_0,x_0)$ is a maximum.
Adding $t\epsilon$ to $\chi H^{-1}$ and letting $\epsilon \searrow 0$ derives a contradiction and gives
\[
	\chi H^{-1} \leq \sup_{M_0} \chi H^{-1}
\] 
or in other words
\[
	\chi^{-1} H \geq \inf_{M_0} \chi^{-1} H.
\]
Since $\chi^{-1} = u/v$ and $v\geq 1$ we have
\[ 
	\inf_{M_0} \chi^{-1} H \leq \chi^{-1}H = uv^{-1}H \leq uH \leq H c^{\prime} \e^{t/n}
\]
where the final inequality comes from \eqref{u bound}. This subsequently offers
\[
	H\e^{t/n}  \geq c.
\]
\end{proof}
\begin{remark}
	Since $[F(D\varphi,D^2\varphi)]^{-1} = \chi H^{-1}$, the upper and lower bound from the previous proof shows that \eqref{eq phi} remains uniformly parabolic on $Q_{T^{\star}}$.
\end{remark}
\begin{thm}
	The solution of \eqref{eq} stays graphical as long as the solution exists.
\end{thm}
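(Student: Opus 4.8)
The plan is to show that graphicality is preserved by controlling the quantity $\chi = \gamma(X,\nu)^{-1}$, where $X = u\,\partial/\partial r$. A hypersurface $M_t$ is the graph of a function over $M^n$ precisely when $\nu$ is never tangent to the level sets $\{r\}\times M^n$, equivalently when $\gamma(X,\nu) > 0$, equivalently when $\chi$ remains finite and positive along the flow. So it suffices to derive a uniform upper bound on $\chi$ on the maximal existence interval $[0,T^\star)$, using the evolution equation \eqref{graphical nature eq} established in Lemma \ref{graphical nature}.

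First I would recall from Lemma \ref{graphical nature} that
\[
	\left( \dt{} - H^{-2}\Delta_{M_t} \right) \chi = -2\chi^{-1}H^{-2}|\nabla \chi|^2_g - \chi H^{-2} \left( |A|^2_g+ \RicN(\nu) \right).
\]
The crucial observation is that the right-hand side is a sum of two non-positive terms, provided we know $\chi > 0$ and $\RicN(\nu) \geq -|A|_g^2$ — but in fact from \eqref{ric m} we have $\RicN = \Ric_M - (n-1)\sigma$, and since $\nu$ restricted to the relevant tangential directions picks up a controlled contribution, the sign of the curvature bracket is not automatically favourable. The cleaner route is to note that at an interior spatial maximum of $\chi$ we have $\nabla\chi = 0$, so the gradient term vanishes, and then the sign of $\dot\chi$ is governed by $-\chi H^{-2}(|A|_g^2 + \RicN(\nu))$. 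Using \eqref{ric m} together with the fact that $\RicN$ vanishes in the radial direction by \eqref{curvature}, one estimates $\RicN(\nu) \geq -(n-1)$ (pointwise, using $|\nu|=1$ and the projection onto $TM^n$), and one needs $|A|_g^2$ to dominate; alternatively, and more robustly, one applies the parabolic maximum principle directly to $\chi$ treating $H^{-2}(|A|_g^2 + \RicN(\nu))$ as a (bounded, by Theorem \ref{H bound} and standard estimates) zeroth-order coefficient, concluding
\[
	\sup_{M_t} \chi \leq e^{Ct}\, \sup_{M_0} \chi
\]
for some constant $C$ depending on the a priori bounds. Since $T^\star < \infty$ would already give a finite bound, and since on any compact time interval this is finite, $\chi$ stays bounded, hence $M_t$ stays graphical.

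An even simpler argument, which I would prefer to present, avoids curvature sign discussions entirely: from part (iii) of the preceding lemma, $\chi = v u^{-1}$, so bounding $\chi$ from above reduces to bounding $v = (1+|D\varphi|_\sigma^2)^{1/2}$ from above and $u$ from below. But Lemma \ref{Dphi exp} already gives $|D\varphi|_\sigma^2 \leq \sup_{M^n}|D\varphi_0|_\sigma^2$, so $v$ is uniformly bounded, and Corollary \ref{comp cor} (via \eqref{u bound}) gives $u \geq e^{t/n}\inf_{M^n} u_0 \geq \inf_{M^n} u_0 > 0$. Hence $\chi = vu^{-1}$ is uniformly bounded above on all of $Q_{T^\star}$, which is exactly the statement that $\gamma(X,\nu)$ stays bounded away from zero, i.e. the evolving hypersurface $M_t = \grph u(t,\cdot)$ remains a graph over $M^n$ for as long as the solution exists. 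The main obstacle is really just recognising that the first-order gradient estimate of Lemma \ref{Dphi exp}, combined with the lower barrier for $u$, already encodes graphicality through the identity $\chi = vu^{-1}$; once that is noticed, no further work is needed and the evolution equation \eqref{graphical nature eq} serves only as an independent confirmation.
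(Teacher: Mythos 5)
Your preferred argument is correct, and it is a genuinely shorter route than the paper's. The paper also reduces graphicality to an upper bound on $\chi=\gamma(X,\nu)^{-1}$, but it obtains that bound by working with the evolution equation \eqref{graphical nature eq}: it computes $\RicN(\nu)=[uv]^{-2}\left(\Ric_M(D\varphi)-(n-1)|D\varphi|^2_{\sigma}\right)$ using \eqref{ric m} and \eqref{curvature}, discards the favourably signed $-\chi H^{-2}|A|^2_g$ term, invokes Lemma \ref{Dphi exp} to make the curvature contribution decay like $e^{-\lambda t}$, uses $H^{-2}\le Ce^{2t/n}$ from Theorem \ref{H bound}, and then compares $\chi$ with the explicit supersolution $f(t)=Ce^{ne^{2t/n}/2}$ via a quasilinear comparison principle (Lieberman). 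Your observation that $\chi=vu^{-1}$, that $v$ is uniformly bounded by the gradient estimate of Lemma \ref{Dphi exp}, and that $u\ge e^{t/n}\inf_{M^n}u_0>0$ by \eqref{u bound}, gives the same conclusion at once; since the paper's own proof already relies on Lemma \ref{Dphi exp} (hence on the Ricci hypothesis \eqref{ricci bound}) and on the scalar barriers, your shortcut loses no generality within the paper's framework, while the paper's argument has the merit of being phrased intrinsically on the evolving hypersurface and of exhibiting the mechanism (the sign of $|A|^2_g+\RicN(\nu)$ up to decaying error) by which graphicality is preserved. One caveat on your first, maximum-principle sketch: you treat $H^{-2}\left(|A|^2_g+\RicN(\nu)\right)$ as a bounded zeroth-order coefficient, but no bound on $|A|^2_g$ is available at this stage; that step only works because the $|A|^2_g$ term enters with a favourable sign and can be dropped, and the remaining coefficient grows like $e^{2t/n}$, so the resulting bound is of Gronwall type with a time-dependent rate (as in the paper's $f$), not $e^{Ct}$ — harmless for boundedness on compact intervals, but worth stating precisely.
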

\begin{proof}
	We wish to derive a differential inequality for $\chi$ and apply a quasilinear comparison principle (see \cite[Corollary 2.5]{lieberman}) to show that $\chi$ is bounded from above. From \eqref{curvature} we have
	\[
		\RicN(e_0,e_a) = \RN_{0a} = \RN_{a0} = 0.
	\]
Together with \eqref{ric m} and the definition of the outward pointing unit normal $\nu$, \eqref{norm} we deduce
\begin{equation}
	\begin{split}
		\RicN(\nu) = &  \left[ v^{-1} u^{-2} \right]^2 \RN_{ik} u^i u^k \\
				= & \left[ uv \right]^{-2}\left(  \Ric_M(D\varphi) - (n-1)|D\varphi|^2_{\sigma} \right).
	\end{split}
\end{equation}
Using the prerequisite \eqref{ricci bound} along with the lower bound on $H$, the positivity of $\chi$ and $|A|^2$, and Lemma \ref{Dphi exp} we infer
\begin{equation}
	\left( \dt{} - H^{-2}\Delta_{M_t} \right) \chi \leq -2\chi^{-1}H^{-2}|\nabla \chi|^2_g + \chi H^{-2} c \e^{-\lambda t}.
\end{equation}
In view of Theorem \ref{H bound} we have 
\[H^{-2} \leq C \e^{2t/n}.\] 
In order to apply a quasilinear comparison principle (see \cite[Corollary 2.5]{lieberman}), we need a function that satisfies
\[ \dt{}f(t) \geq C \e^{(2/n-\lambda)t}f(t),\]
but since we only want to show the boundedness of $\chi$ and are not interested in a sharp estimate, a solution to the equation
\[\dt{}f(t)  = C \e^{2t/n}f(t),\] will suffice. Indeed $f(t)=C\e^{n\e^{2t/n}/2}$ does the job. We define the quasilinear parabolic operator $P$ by
\[Pw \equiv \left( \dt{} - H^{-2}\Delta_{M_t} \right) w + 2w^{-1} H^{-2}|\nabla w|_g^2 - H^{-2}c\e^{-\lambda t} w.\]
Then
\[ Pf > 0 \geq P\chi. \]
By applying a comparison principle \cite[Corollary 2.5]{lieberman}, we deduce that $\chi$ is bounded from above on $\bar{Q}_{T^{\star}}$ by $f$ and is thus bounded on compact intervals, proving the claim. 
\end{proof}

\subsection{rescaled hypersurfaces}  \label{rescaled hypersurfaces}
As alluded to previously, for later contemplations of convergence we must rescale the embedded hypersurfaces so that the solutions remain inside a compact set. In doing this, we can investigate the asymptotics without having to appeal to spatial infinity. We use a factor of $\e^{-t/n}$, procured from \eqref{u bound}. In  this section we investigate the a priori effect of rescaling.\\
\\
Consider the family of rescaled embeddings $\tilde{x}(t,\cdot)$ with \[\tilde{x}(t,p) = (\tilde{u}(t,p(t)),p(t)),\]
where $\tilde{u} = u\e^{-t/n}.$ Using this notation we observe the following rescaled quantities:
\begin{align}
		D\tilde{u} = &  Du \e^{-t/n} \\
		\dot{\tilde{u}} = & \dot{u}\e^{-t/n} - \frac{\tilde{u}}{n}\\
		\tilde{\varphi} = & \varphi - \frac{t}{n}\\
\label{resc dphi}		D\tilde{\varphi} = & D\varphi\\
		\dot{\tilde{\varphi}} = & \dot{\varphi} - \frac{1}{n}.
\end{align}
This shows that the induced metric on the rescaled evolving hypersurfaces satisfies
\begin{equation}
\tilde{g}_{ij} = \gamma(\tilde{x}_i, \tilde{x}_j) = \e^{-2t/n} g_{ij},
\end{equation}
while the inverse metric satisfies
\begin{equation}
	\tilde{g}^{ij} = \e^{2t/n} g^{ij}.
\end{equation}
Thus, the rescaled second fundamental form satisfies
\begin{equation}
	\tilde{h}_{ij} = h_{ij} \e^{-t/n},
\end{equation}
and the mean curvature as well as the Weingarten map rescale as expected
\begin{align}
	\tilde{h}^i_j = & h^i_j \e^{t/n} \\
	\tilde{H} = & H\e^{t/n} .
\end{align}
The rescaled hypersurfaces must now solve the equations: for $\tilde{\varphi}$
\begin{equation}\label{resc eq phi}
\left\{
  	\begin{array}{l l}
    		\dot{\tilde{\varphi}} -  \left[ F(\cdot,D\varphi, D^2 \varphi)\right]^{-1} +n^{-1} &= 0\\
    			\tilde{\varphi}_0  &= \varphi(0,\cdot),
  	\end{array} \right.
\end{equation}
in light of \eqref{resc dphi}, and for $\tilde{u}$
\begin{equation}\label{resc eq u}
\left\{
  	\begin{array}{l l}
    		\dot{\tilde{u}} -  v \tilde{H}^{-1} + \tilde{u}n^{-1}& =0\\
    			\tilde{u}_0 & = \tilde{u}(0,\cdot),
  	\end{array} \right.
\end{equation}
Both equations are obviously still uniformly parabolic on $Q_{T^{\star}}$.
\begin{lem}
	The rescaled quantity $|D\tilde{u}|$ decays exponentially fast to $0$.
\end{lem}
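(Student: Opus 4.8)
The plan is to reduce the claim entirely to two a priori estimates already in hand: the exponential decay of $|D\varphi|_\sigma$ from Lemma \ref{Dphi exp}, and the uniform two-sided bound on the rescaled embedding function $\tilde u$ coming from \eqref{u bound} (equivalently Corollary \ref{comp cor}). No new evolution equation is needed; the lemma is a bookkeeping consequence of what has been proved.

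First I would rewrite $D\tilde u$ in terms of $\tilde u$ and $D\varphi$. Since $\varphi = \log u$ we have $Du = u\,D\varphi$, and by the first line of the list of rescaled quantities $D\tilde u = Du\,\e^{-t/n} = (u\e^{-t/n})D\varphi = \tilde u\,D\varphi$. Taking norms with respect to $\sigma$,
\[
	|D\tilde u|_\sigma = \tilde u\,|D\varphi|_\sigma .
\]
Now Lemma \ref{Dphi exp} furnishes a $\lambda>0$ with $|D\varphi|^2_\sigma\,\e^{\lambda t}\le \sup_{M^n}|D\varphi_0|^2_\sigma$ on $Q_{T^{\star}}$, hence $|D\varphi|_\sigma \le \bigl(\sup_{M^n}|D\varphi_0|_\sigma\bigr)\e^{-\lambda t/2}$; while \eqref{u bound} gives $\tilde u = u\e^{-t/n}\le \sup_{M^n}u_0$. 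Multiplying the two,
\[
	|D\tilde u|_\sigma \le \Bigl(\sup_{M^n}u_0\Bigr)\Bigl(\sup_{M^n}|D\varphi_0|_\sigma\Bigr)\e^{-\lambda t/2},
\]
which tends to $0$ exponentially fast as $t\to\infty$. If instead the norm is meant with respect to the rescaled evolving metric $\tilde g$, the same conclusion follows since $\tilde g_{ij} = \e^{-2t/n}g_{ij}$ and $g_{ij} = u^2(\sigma_{ij}+\varphi_i\varphi_j)$ with both $u\e^{-t/n}$ and $|D\varphi|_\sigma$ uniformly bounded, so $\tilde g$ is uniformly equivalent to $\sigma$; multiplying by a bounded factor does not affect exponential decay.

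I do not expect a genuine obstacle here. The only points requiring a little care are that the resulting decay exponent for $|D\tilde u|_\sigma$ is $\lambda/2$, i.e. half the exponent appearing in the estimate for $|D\varphi|^2_\sigma$, and that $\lambda$ depends only on the smallest eigenvalue $\mu$ of $\{a^{ij}\}$ and on the Ricci lower bound $\delta$ from \eqref{ricci bound}, hence ultimately on $n$ and the initial data — consistent with the dependence of the constants in Theorem \ref{H bound}.
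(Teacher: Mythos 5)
Your argument is correct and is essentially the paper's own proof: both write $|D\tilde{u}| = \tilde{u}\,|D\varphi|_{\sigma}$ (the paper states this as $|D\tilde{u}|^2 = |D\varphi|^2\e^{-2t/n}u^2$), then combine Lemma \ref{Dphi exp} with the bound \eqref{u bound} on $u\e^{-t/n}$ to obtain decay at rate $\e^{-\lambda t/2}$. Your extra remarks on the choice of norm and the provenance of $\lambda$ are fine but not needed.
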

\begin{proof}
	From $\varphi =\log u$, we deduce 
\[
	|D\tilde{u}|^2 = |D\varphi|^2 \e^{-2t/n} u^2,
\]
and using Lemma \ref{Dphi exp} we have
\begin{equation}
	|D\tilde{u}|^2 \leq C \e^{-\lambda t - 2t/n }u^2.
\end{equation}
Taking the root from both sides gives
\[
	|D\tilde{u}| \leq C \e^{-\lambda t/ 2} u\e^{-t/n} \leq C^{\prime} \e^{-\lambda t /2},
\]
where, for the final inequality, we have once again used \eqref{u bound}. 
\end{proof} 
\subsection{convergence}\label{convergence}
Let us summarise the results we have gathered thus far. In \S \ref{posing the problem} we transformed the initial problem \eqref{eq} into two versions of a scalar problem, \eqref{eq phi} and \eqref{eq u} respectively. In \S \ref{first order estimates} we proved existence of a solution $\varphi$ to \eqref{eq phi}, which is equivalent to existence of a solution $u = \e^{\varphi}$ to \eqref{eq u}, on the parabolic cylinder $Q_{T^{\star}}$ for $0<T^{\star} \leq \infty$. \\
\\
We had derived a priori estimates for $\varphi - t/n (\equiv \tilde{\varphi})$ and $u\e^{-t/n}(\equiv \tilde{u})$ in \S \ref{max and comp principles}, then we went on to bound $\dot{\varphi}$ from above and below with its initial data and $|D\varphi|^2_{\sigma}$ from above with an exponentially decaying factor (which is equivalent to $1 < v \leq 1+C\e^{-\lambda t}$).\\
\\
We showed that the {\em starshapedness} of the solution as a hypersurface is maintained throughout the evolution in \S \ref{graphical nature} and then proved bounds on $H\e^{t/n}( \equiv \tilde{H})$ from above and below. These bounds show that the nonlinear operator $G=F^{-1}= v[uH]^{-1}$  of \eqref{eq phi} is uniformly bounded from above and below up to time $T^{\star}$.\\
\\
With these bounds, we then rescaled the surfaces in \S \ref{rescaled hypersurfaces} to acquire a solution $\tilde{u}$ to the uniformly parabolic equation \eqref{resc eq u} which remains inside a compact set during its existence. We also showed that $|D\tilde{u}|$ decays exponentially fast.\\
\\
Now we wish to achieve higher order estimates for $\varphi$ independent of $t$. The notation we use throughout this section in terms of norms and spaces can be found in \cite[\S 2.5]{ger2} and also in \cite{lady}.
\begin{thm} 
	Let $\varphi$ solve \eqref{eq phi} in $Q_{T^{\star}}$. Then for all $\alpha \in (0,1)$, $\delta \in (0,T^{\star})$ and $m\geq 1$,
	\begin{equation}\label{phi bound m}
		|\varphi|_{m+\alpha,Q_{\delta,T^{\star}}} \leq C,
	\end{equation}
\end{thm}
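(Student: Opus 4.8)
The plan is to bootstrap from the a priori estimates already in hand to higher-order interior parabolic Schauder estimates for the scalar equation \eqref{eq phi}. Recall the situation: we have a solution $\varphi$ of class $H^{2+\beta,\frac{2+\beta}{2}}$ on $Q_{T^\star}$, the bounds $\inf_M \varphi_0 \le \varphi - t/n \le \sup_M \varphi_0$ from Corollary \ref{comp cor}, the bound $\inf_M \dot\varphi(0,\cdot)\le\dot\varphi\le\sup_M\dot\varphi(0,\cdot)$, the exponential decay $|D\varphi|^2_\sigma\e^{\lambda t}\le\sup_M|D\varphi_0|^2_\sigma$ from Lemma \ref{Dphi exp}, and — via the bounds on $H\e^{t/n}$ in Theorem \ref{H bound} together with Remark \ref{aij pos def} — that \eqref{eq phi} is uniformly parabolic on all of $Q_{T^\star}$ with ellipticity constants independent of $t$. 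The key point for $t$-independence is that the exponential decay of $|D\varphi|$ forces $v\to 1$, so the coefficients $a^{ij} = v^{-2}u^2 g^{ij}$ of the linearised operator, written in the $\varphi$-variable (not the $u$-variable), have bounds depending only on $n$ and the initial data, uniformly in $t$; Remark \ref{F phi independent} is essential here since $F$ does not see $\varphi$ undifferentiated, so no lower/upper bound on $u$ itself is needed to control the coefficients.

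The steps I would carry out: first, rewrite \eqref{eq phi} as a quasilinear uniformly parabolic equation $\dot\varphi = G(D\varphi, D^2\varphi)$ with $G = F^{-1}$, and observe that on $Q_{\delta,T^\star}$ we already control $|\varphi|_{0}$, $|D\varphi|_0$, $|\dot\varphi|_0$ (hence, after using the equation, a bound on the "parabolic" $C^1$ norm), all with constants independent of $T^\star$. Second, apply an interior Hölder gradient estimate of Krylov–Safonov / De Giorgi–Nash–Moser type (as in \cite[\S 2.5]{ger2} or \cite{lady}) to the quantity $\varphi$ — or more cleanly, differentiate the equation once spatially, note that each $D_k\varphi$ solves a linear uniformly parabolic equation with bounded measurable coefficients (the coefficients being $-F^{-2}a^{ij}$ and $F^{-2}a^i$ evaluated along the solution), and conclude an interior $C^{\alpha,\alpha/2}$ bound on $D\varphi$, i.e. $|\varphi|_{1+\alpha,Q_{\delta,T^\star}}\le C$. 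Third, with Hölder-continuous coefficients now in hand, invoke interior linear parabolic Schauder estimates on the equations satisfied by $D_k\varphi$ to upgrade to $|\varphi|_{2+\alpha,Q_{\delta',T^\star}}\le C$ for $\delta'>\delta$. Fourth, bootstrap: differentiate the equation $m-2$ further times, each time obtaining a linear parabolic equation for a higher derivative of $\varphi$ whose coefficients lie in the Hölder class already established, apply Schauder once more, shrinking the parabolic cylinder by a fixed amount at each stage (which is harmless since $\delta\in(0,T^\star)$ is fixed and we only take finitely many steps to reach a given $m$), to conclude \eqref{phi bound m} for every $m\ge 1$ and $\alpha\in(0,1)$. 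All constants depend only on $u_0$ and its derivatives up to the order needed, $n$, $\alpha$, $m$ and $\delta$ — crucially not on $T^\star$, because every estimate feeding the bootstrap (ellipticity, $C^0$, $C^1$ of $\varphi$) is $t$-uniform.

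The main obstacle is establishing the $t$-uniform interior $C^{1,\alpha}$ estimate honestly — everything after that is a routine Schauder bootstrap. The delicate point is that one must work with the equation for $\varphi$ rather than for $u$: in the $u$-equation the coefficients degenerate or blow up as $u\e^{-t/n}$ stays bounded but $u\to\infty$, whereas for $\varphi$ the nonlinearity $F$ depends only on $D\varphi$ and $D^2\varphi$, and the a priori control $|D\varphi|_0\le C$, $|\dot\varphi|_0\le C$ together with uniform parabolicity is exactly the input needed for a De Giorgi–Nash–Moser Hölder estimate with $T^\star$-independent constant. One must also be a little careful that the interior estimates are applied on parabolic cylinders bounded away from the initial slice $t=0$ (hence the restriction to $Q_{\delta,T^\star}$), since the initial data is only $C^{2,\alpha}$ and we are claiming $C^{m+\alpha}$ smoothing for positive time; but this is standard parabolic regularisation and the $\delta$ in the statement accommodates it.
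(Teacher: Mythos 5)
Your overall strategy is the same as the paper's: differentiate \eqref{eq phi}, regard the derivatives of $\varphi$ as solutions of linear, uniformly parabolic equations whose coefficients are controlled by the $t$-uniform a priori bounds ($C^0$ and gradient estimates, $\dot\varphi$ bounds, and the uniform parabolicity coming from Theorem \ref{H bound}), apply linear parabolic estimates on $Q_{\delta,T^{\star}}$, and then bootstrap by induction on $m$. The paper implements this by first citing Gerhardt's interior regularity theorem (\cite[Theorem 2.5.10]{ger2}) for the qualitative $H^{2+m+\alpha,\frac{2+m+\alpha}{2}}$ smoothing, and then applying \cite[Theorem 5.1]{lady} to a \emph{system} of $n+1$ linear equations, whereas you use Krylov--Safonov/De Giorgi--Nash--Moser followed by Schauder; that substitution is fine in principle.

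There is, however, one concrete gap at your Schauder step. After differentiating only spatially, the equation satisfied by $\varphi_k$ has leading coefficient $F^{-2}a^{ij}$ (and lower-order coefficient $F^{-2}a^i$). While $a^{ij}$ and $a^i$ depend only on $D\varphi$, and hence become H\"older once you have the $C^{\alpha,\alpha/2}$ bound on $D\varphi$, the factor $F^{-2}$ depends on $D^2\varphi$ --- exactly the quantity whose H\"older regularity you are trying to establish --- so "H\"older-continuous coefficients now in hand" is not yet justified, and Schauder cannot be applied as stated. The fix is cheap and is precisely what the paper builds in: since $F^{-1}=\dot\varphi$ along the solution, H\"older continuity of the coefficient follows from H\"older continuity of $\dot\varphi$, which you obtain by also differentiating \eqref{eq phi} in time (equation \eqref{dt eq phi}) and applying the same bounded-measurable-coefficient estimate to $\dot\varphi$; this is why the paper works with the full system of $(n+1)$ equations for $\dot\varphi,\varphi_1,\dots,\varphi_n$ rather than the $n$ spatial derivatives alone. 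With that addition (and a qualitative justification for differentiating, e.g.\ difference quotients or the interior smoothing result), your bootstrap goes through and yields \eqref{phi bound m} with constants independent of $t$, as claimed.
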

where $C$ does not depend on $t$ and $Q_{\delta,T^{\star}}\equiv [\delta,T^{\star}) \times M^n$.
\begin{proof} Since $M^n$ is smooth and $F^{-1}$ is $C^{\infty}$ in its arguments, we can apply \cite[Theorem 2.5.10]{ger2} to our situation, which states that under the above prerequisites, $\varphi$ is of class $H^{2+m+\alpha, \frac{2+m+\alpha}{2}}(Q_{\delta,T^{\star}})$ for $m\geq 1$. We differentiate \eqref{eq phi} in the direction of $e_k$, using \eqref{commutator form} to get
\begin{equation}
	0=e_k\left( \dot{\varphi} - F^{-1}\right) = \dot{\varphi}_k + F^{-2}\left\{-a^{ij}D_i D_j\varphi_{k} + a^{ij}[ {^{\sigma}\tensor{R}{_i_k_j^m}} ]\varphi_m + a^i D_i\varphi_k\right\}.
\end{equation}
Since $a^{ij} = [u/v]^2g^{ij} = v^{-2}(\sigma^{ij}-\varphi^i \varphi^j v^{-2})$, using again the symmetries of $R_{ijkl}$, this transforms into
\begin{equation}
	 0 = \dot{\varphi}_k + F^{-2} \left\{ - a^{ij}D_i D_j \varphi_k+ v^{-2}\Ric_M(D\varphi, e_k) + a^i D_i \varphi_k \right\} 
\end{equation}
In addition to this, we once again differentiate \eqref{eq phi} with respect to $t$ (see proof of Lemma \ref{Dphi exp}) to achieve
\begin{equation}
	\begin{split}
		\ddot{\varphi} + F^{-2}\left\{-a^{ij} \dot{\varphi_{ij}} + a^{i}\dot{\varphi_{i}}\right \} & = 0\\
		\dot{\varphi}_1 + F^{-2} \left\{ - a^{ij}D_i D_j \varphi_1+ v^{-2}\Ric_M(D\varphi, e_1) + a^i D_i \varphi_1 \right\} &=0\\
			\vdots \hspace{200pt}	\vdots & \\
		\dot{\varphi}_n + F^{-2} \left\{ - a^{ij}D_i D_j \varphi_n+ v^{-2}\Ric_M(D\varphi, e_n) + a^i D_i \varphi_n \right\} &=0,
	\end{split}
\end{equation}
which is a system of $(n+1)$ linear parabolic equations of second order for the first derivatives of $\varphi$ on the interval $[\delta,T^{\star})$. We may now apply \cite[Theorem 5.1]{lady}, which delivers uniform bounds on $|\varphi_k|_{2+\alpha,Q_{\delta,T^{\star}}}$ for $k=1,\dots,n$. Using an induction argument achieves the result.
\end{proof}
\begin{lem}
	$|D^2 \varphi|$ decays exponentially fast to $0$.
\end{lem}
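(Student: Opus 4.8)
The plan is to derive an evolution inequality for a suitable scalar built out of the Hessian of $\varphi$ and then, as in the proof of Lemma \ref{Dphi exp}, extract an exponential decay rate via the maximum principle. The cleanest quantity to track is the rescaled second fundamental form, since we have already shown (see the computations preceding \eqref{resc eq u}) that $\tilde h_{ij}=h_{ij}\e^{-t/n}$ and $\tilde h^i_j = h^i_j\e^{t/n}$, and from \eqref{sff eq} the Hessian $\varphi_{ij}$ is algebraically controlled by $h_{ij}$, $u$, $v$ and first derivatives of $\varphi$; in particular $uv^{-1}(\sigma_{ij}+\varphi_i\varphi_j-\varphi_{ij})=h_{ij}$, so $D^2\varphi$ decays at the same rate as $u^{-1}h_{ij}$ up to the already-controlled (exponentially decaying, by Lemma \ref{Dphi exp}) term $\varphi_i\varphi_j$ and the bounded terms $\sigma_{ij}$. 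First I would fix the Weingarten-type quantity $|\tilde A|^2_{\tilde g} = \tilde h^i_j \tilde h^j_i = h^i_j h^j_i = |A|^2_g$ (scale invariant), and more to the point the quantity $w \equiv |A|^2_g$ itself together with the lower bound $H\e^{t/n}\ge c$ from Theorem \ref{H bound}; combining Simons' identity with the evolution equation \eqref{ev eq hij} for $h_{ij}$ yields a parabolic evolution equation for $|A|^2_g$ of the schematic form $(\dt{} - H^{-2}\Delta_{M_t})|A|^2_g = -2H^{-2}|\nabla A|^2_g + (\text{lower order})$, where the lower-order terms are cubic in $A$, contracted with $H^{-1}$, plus ambient curvature terms of the warped product which by \S \ref{wp geometry} are controlled by $\sigma$, $r=u$ and $\RicN \ge (1-n)\sigma$.

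The key step is then to exploit the factors of $H^{-1}$ and $H^{-2}$: by Theorem \ref{H bound}, $H^{-1}\le C\e^{t/n}$ and $H^{-2}\le C\e^{2t/n}$, while a direct scaling count shows that the cubic term $H^{-1}|A|^3$ in the evolution of $|A|^2$ carries a net factor that, once we pass to $\tilde h$, becomes $\tilde H^{-1}|\tilde A|^3$ with $\tilde H^{-1}$ bounded; so the bad cubic term is actually dominated by $|\tilde A|^2 \cdot |\tilde A|$, and since we expect $|\tilde A|\to$ const (the limiting surface $\{r_\infty\}\times M^n$ has $|A|^2 = n/r_\infty^2$), this term does not by itself give decay. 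The decay must come instead from the ambient-curvature/gradient structure, exactly as in Lemma \ref{graphical nature}: I would instead run the argument on the quantity $z \equiv |D^2\varphi|^2_\sigma$ or, better, on $\tilde h^i_j - (\text{trace part})$ — that is, on the trace-free part of the rescaled second fundamental form — and use that $\RicN(\nu)$ combined with $|A|^2$ appears with the favourable sign in \eqref{graphical nature eq}, \eqref{ev eq hij}. Concretely, from \eqref{sff eq} we have $\varphi_{ij} = \sigma_{ij}+\varphi_i\varphi_j - vu^{-1}h_{ij}$, and $vu^{-1}h_{ij} = v u^{-1}\cdot uv^{-1}(\sigma_{ij}+\varphi_i\varphi_j-\varphi_{ij})$ is a tautology, so the right move is: show $h^i_j \e^{t/n}\to \delta^i_j / r_\infty$ exponentially, which combined with $\tilde g_{ij}\to r_\infty^2\sigma_{ij}$ and $D\tilde\varphi\to 0$ forces, through \eqref{sff eq} rescaled, $\varphi_{ij}\to 0$ exponentially.

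So the concrete plan is: (1) write the evolution equation for the rescaled Weingarten map $\tilde h^i_j$ by rescaling \eqref{ev eq weing} — the $n^{-1}\tilde h^i_j$ drift term appears, and the ambient curvature term $\RN(x_k,\nu,x_j,\nu)g^{ki}$ rescales with a factor $\e^{2t/n}$ but, by the structure of the warped-product curvature from \S \ref{wp geometry} together with the exponential decay of $|D\varphi|^2_\sigma$ (Lemma \ref{Dphi exp}), is itself exponentially small; (2) subtract the ODE satisfied by the "round" solution $h^i_j = H^{-1}$-multiple-of-identity, i.e.\ consider $\tilde h^i_j - n^{-1}\tilde H \,\delta^i_j$ or simply the trace-free part $\mathring{\tilde h}{}^i_j$, and derive for $|\mathring{\tilde h}|^2$ a parabolic inequality of the form $(\dt{} - \tilde H^{-2}\tilde\Delta)|\mathring{\tilde h}|^2 \le -c|\mathring{\tilde h}|^2 + C\e^{-\lambda' t}$ using the uniform parabolicity and the $H$-bounds of Theorem \ref{H bound} to control coefficients and the decay of the curvature terms; (3) apply the maximum principle (as in Lemma \ref{Dphi exp}) to get $|\mathring{\tilde h}|^2 \le C\e^{-\lambda'' t}$; (4) separately, the trace $\tilde H = H\e^{t/n}$ is bounded above and below and, using the corollary $|M_t| = |M_0|\e^t$ together with the convergence of $\tilde g$, one shows $\tilde H \to n/r_\infty$ exponentially; (5) feed both into the rescaled form of \eqref{sff eq}, namely $\varphi_{ij} = \sigma_{ij} + \varphi_i\varphi_j - v\tilde u^{-1}\tilde h_{ij}$ with $\tilde h_{ij} = \tilde g_{ik}\tilde h^k_j$, and use $\tilde g_{ij}\to r_\infty^2\sigma_{ij}$, $\tilde h^k_j \to (nr_\infty)^{-1}\delta^k_j$, $v\to 1$, $\tilde u \to r_\infty$, all exponentially, to conclude $\varphi_{ij}\to 0$ exponentially, i.e. $|D^2\varphi|$ decays exponentially to $0$. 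The main obstacle I anticipate is step (2)–(3): controlling the reaction terms in the evolution of the trace-free rescaled second fundamental form so that the "good" negative term genuinely dominates — this requires using the strict positivity $\RicN \ge (1-n)\sigma$ i.e.\ \eqref{ricci bound} a second time (not just the sign of $|A|^2$), and carefully tracking how each ambient-curvature contraction inherits a factor $|D\varphi|^2_\sigma$ and hence the decay from Lemma \ref{Dphi exp}; the $\tilde H^{-2}\le C$ bound makes this a clean application of the scalar maximum principle once the curvature terms are pinned down.
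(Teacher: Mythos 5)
Your strategy is genuinely different from the paper's, and it has a gap at exactly the point you yourself flag as the main obstacle: the differential inequality $(\dt{} - \tilde H^{-2}\tilde\Delta)|\mathring{\tilde h}|^2 \le -c\,|\mathring{\tilde h}|^2 + C\e^{-\lambda' t}$ is asserted schematically but never derived, and it is not automatic. The evolution \eqref{ev eq weing} of the Weingarten map contains the terms $2H^iH_jH^{-3} - H^i_j H^{-2}$, i.e.\ full second derivatives of $H$; these do not drop out when you pass to the trace-free part, they are not disposed of by the maximum principle alone, and at this stage of the argument nothing controls $\nabla^2 H$ --- in the paper such control only becomes available through the uniform higher-order estimates \eqref{phi bound m}. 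Likewise the quadratic reaction term $H^{-1}h^i_kh^k_j$ does not by itself produce a strictly negative coefficient on the traceless part; one would need to exhibit the cancellation explicitly (a Huisken-type roundness computation), which you have not done. Steps (4)--(5) are moreover close to circular: exponential rates for $\tilde u\to r_\infty$, $\tilde g_{ij}\to r_\infty^2\sigma_{ij}$ and $\tilde H\to n/r_\infty$ are, in the paper, consequences of the decay of $|D\varphi|$ \emph{and} $|D^2\varphi|$; indeed $\dot{\tilde\varphi}=F^{-1}-n^{-1}$ with $F-n = O(|D\varphi|^2_\sigma)+O(|D^2\varphi|_\sigma)$, so any rate for $\tilde u$ already presupposes the Hessian decay you are trying to prove. (Minor point: the limit of $\tilde h^i_j$ is $r_\infty^{-1}\delta^i_j$, not $(nr_\infty)^{-1}\delta^i_j$.)

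The paper's proof bypasses the second fundamental form entirely and is much shorter: by the preceding theorem the solution satisfies uniform bounds $|\varphi|_{m+\alpha,Q_{\delta,T^{\star}}}\le C$ independent of $t$; Hamilton's interpolation inequality gives
\[
\int |D^2\varphi|^2\, d\mu \;\leq\; C\left\{\int |D^3\varphi|^2\, d\mu\right\}^{\frac23}\left\{\int |D\varphi|^2\, d\mu\right\}^{\frac13},
\]
and combining the uniform bound on $\int|D^3\varphi|^2$, the compactness of $M^n$ and the exponential decay of $|D\varphi|^2_\sigma$ from Lemma \ref{Dphi exp} yields $\int|D^2\varphi|^2\,d\mu\le C\e^{-\lambda t}$ (pointwise decay then follows by interpolating once more against the uniform higher-order bounds). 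If you wish to keep your geometric route, the missing work is precisely the explicit computation of the reaction terms for the trace-free rescaled second fundamental form and an independent, non-circular rate for the trace; as it stands the proposal does not constitute a proof.
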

\begin{proof}
	An interpolation inequality due to Hamilton \cite[Cor. 12.7]{ham} reads
\[
	\int |D^2 \varphi|^2 d\mu \leq C \left \{ \int |D^3 \varphi|^2 d\mu \right\}^{\frac{2}{3}} \left\{  \int |D\varphi|^2 d\mu \right\}^{\frac{1}{3}},
\]
and in light of \eqref{phi bound m} and the compactness of $M^n$ we infer
\[
	\int |D^2 \varphi|^2 d\mu \leq C \e^{-\lambda t}.
\]
\end{proof}
\begin{remark}
	This result implies 
\begin{equation}\label{D2 u bound}
	|D^2 \tilde{u}| \leq C \e^{-\beta t},
\end{equation}
for a constant $\beta>0$ as one can easily see, using the fact that 
\[
	D^2 \varphi =  u^{-1}D^2 u - u^{-2}Du \otimes Du,
\] together with our known estimates, provides
\[
|D^2 \tilde{u}| \leq c u^{-1}|D^2u| = c\left( |D^2 \varphi| + |D\varphi|^2 \right) \leq C e^{-\beta t}.
\]
\end{remark}

\begin{cor}
	The non-linear parabolic equation \eqref{eq phi} has a solution for all times $t>0$.
\end{cor}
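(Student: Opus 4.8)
The plan is the classical open--closed continuation argument. By Theorem~\ref{short time existence} equation \eqref{eq phi} has a unique solution $\varphi$ on a maximal parabolic cylinder $Q_{T^\star}$ with $0<T^\star\le\infty$, and it suffices to rule out $T^\star<\infty$. So I would suppose $T^\star<\infty$ and work towards a contradiction.

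First I would assemble the a priori estimates already in hand, all of which are uniform on the \emph{closed} cylinder $\bar Q_{T^\star}$ precisely because $T^\star$ is finite. Corollary~\ref{comp cor} controls $\varphi-t/n$, hence $\varphi$ itself, in $C^0$; Lemma~\ref{Dphi exp} bounds $|D\varphi|_\sigma$, so that $v$ stays between $1$ and a fixed constant; and Theorem~\ref{H bound} bounds $H\e^{t/n}$ above and below by positive constants. Since $a^{ij}=-\partial F/\partial\varphi_{ij}=[u/v]^2g^{ij}$ and the equation reads $\dot\varphi=F^{-1}=\chi H^{-1}$ with $\chi=vu^{-1}$, these three facts together say that \eqref{eq phi} is uniformly parabolic on $\bar Q_{T^\star}$ with ellipticity constants and lower-order data bounded independently of $t$. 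Feeding this into \eqref{phi bound m} then yields, for every $m\ge1$ and every $\delta\in(0,T^\star)$, a bound $|\varphi|_{m+\alpha,Q_{\delta,T^\star}}\le C$ with $C$ independent of $t$.

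Next I would pass to the limit $t\nearrow T^\star$. The uniform higher-order bounds just quoted, applied on $Q_{\delta,T^\star}$ for a fixed $\delta$, give (via Arzel\`a--Ascoli) that $\varphi(t,\cdot)$ converges in $C^{2,\alpha}(M^n)$ --- indeed in $C^\infty(M^n)$ --- to some $\varphi(T^\star,\cdot)$ as $t\nearrow T^\star$, and that $\varphi$ extends to a solution of \eqref{eq phi} of class $H^{2+\beta,\frac{2+\beta}{2}}(\bar Q_{T^\star})$. The lower bound $H\e^{t/n}\ge c$ from Theorem~\ref{H bound} moreover shows that the graph $\grph\varphi(T^\star,\cdot)$ still has strictly positive mean curvature, so $\varphi(T^\star,\cdot)$ is admissible (smooth, positive mean curvature) initial data in the sense required by Theorem~\ref{short time existence}.

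Finally I would restart the flow from time $T^\star$: Theorem~\ref{short time existence} produces a solution on $[T^\star,T^\star+\varepsilon)\times M^n$ for some $\varepsilon>0$, and by the uniqueness in Theorem~\ref{short time existence} (or Theorem~\ref{comp princ}) it agrees with $\varphi$ where the two overlap, so concatenating gives a solution on $Q_{T^\star+\varepsilon}$, contradicting the maximality of $T^\star$. Hence $T^\star=\infty$. The only step that genuinely relies on the preceding sections, and therefore the main obstacle, is establishing the \emph{$t$-independent} uniform parabolicity of \eqref{eq phi}: without the two-sided bound on $H\e^{t/n}$ from Theorem~\ref{H bound} the coefficient $F^{-2}a^{ij}$ could degenerate as $t\nearrow T^\star$, which would simultaneously destroy the Schauder estimate \eqref{phi bound m} and the strict positivity of $H$ needed to restart the flow; everything else is the routine continuation mechanism.
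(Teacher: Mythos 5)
Your proposal is correct and follows essentially the same route as the paper: assume $T^\star<\infty$, use the uniform estimates and uniform parabolicity to get $t$-independent bounds $|\varphi|_{2+m+\alpha,Q_{\delta,T^\star}}\leq C$, extract a limit by Arzel\`a--Ascoli, and restart the flow via Theorem \ref{short time existence} to contradict maximality. The only cosmetic difference is that the paper restarts from $\varphi(t_{j_0},\cdot)$ for $t_{j_0}$ close to $T^\star$, using that the existence time $\epsilon_1$ is uniform on a $C^{2+m,\beta}$-neighbourhood of the limit $\bar\varphi$, rather than extending the solution to time $T^\star$ itself and restarting there as you do.
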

\begin{proof}
	Arguing as in \cite[Lemma 2.6.1]{ger2}, we assume $T^{\star}< \infty$ is the maximal time of existence. Let $m\geq 0$. With the uniform bounds we have obtained for $|\varphi|_{2+m+\alpha,Q_{\delta,T^{\star}}}$, by applying Arzela-Ascoli, we see that there exists a sequence $t_j \rightarrow T^{\star}$ for which
\begin{equation}
	\varphi(t_j,\cdot) \xrightarrow[j \rightarrow \infty]{}\bar{\varphi},
\end{equation}
in $C^{2+m,\beta}(M^n)$, for $0<\beta<\alpha$, with $\bar{\varphi}\in C^{2+m,\alpha}(M^n)$. Then choosing $\bar{\varphi}$ as initial value in Theorem \ref{short time existence}, for which the flow exists at least up to time $\epsilon_1>0$, we deduce that there exists an open set $U(\bar{\varphi}) \subset C^{2+m,\beta}(M^n)$, for whose members the flow also exists up to time $\epsilon_1$. Now choosing $j_0$ so large that $\varphi(t_{j_0}, \cdot) \in U(\bar{\varphi})\cap C^{2+m,\alpha}(M^n)$ and that $t_{j_0}+ \epsilon_1 > T^{\star}$, we extend the maximal time of existence by once again applying the existence Theorem \ref{short time existence}, this time with initial data $\varphi(t_{j_0},\cdot)$, thus contradicting the maximality of $T^{\star}$. Therefore, a solution exists for all $t>0$.
\end{proof}

\subsection{asymptotics}
From \eqref{D2 u bound}, since \eqref{u bound} shows that $\tilde{u}$ stays inside a compact set during evolution and bearing Lemma \ref{Dphi exp} in mind, we conclude that $\tilde{u}$ is asymptotically constant, we shall call this constant $r_{\infty}$. To achieve a value for $r_{\infty}$ we will need to make some observations. 
\begin{lem}
	The rescaled quantities satisfy
\begin{itemize}
	\item[{\em (i)}]
	\begin{equation}
		\tilde{g}_{ij}\xrightarrow[t \rightarrow \infty]{} r_{\infty}^2 \sigma_{ij}
	\end{equation}
	\item[{\em (ii)}] 
	\begin{equation}
		\tilde{h}^i_j \xrightarrow[t \rightarrow \infty]{} r_{\infty}^{-1} \delta^i_{j}
	\end{equation}
	\item[{\em (iii)}]\label{rescaled volume}
	\begin{equation} \label{ev eq tilde gij}
		\dot{\tilde{g}}_{ij} = 2\tilde{H}^{-1} \tilde{h}_{ij} - 2n^{-1} \tilde{g}_{ij}
	\end{equation}
	\item[{\em (iv)}]
	\begin{equation}
		\dt{}(\de \! \tilde{\mu})=  0 
	\end{equation}
\end{itemize}
\end{lem}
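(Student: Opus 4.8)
The plan is to reduce each item to an identity in $\tilde u$, the fixed metric $\sigma$ and the derivatives of $\varphi$, and then to pass to the limit using the decay estimates already established. For \emph{item (i)}, from $g_{ij} = u^2(\sigma_{ij} + \varphi_i\varphi_j)$ and $\tilde u = u\e^{-t/n}$ one gets $\tilde g_{ij} = \e^{-2t/n}g_{ij} = \tilde u^2(\sigma_{ij} + \varphi_i\varphi_j)$. As $t\to\infty$ we have $\tilde u\to r_\infty$, while Lemma \ref{Dphi exp} gives the exponential decay of $|D\varphi|_\sigma^2$ and hence of every component $\varphi_i = D_i\varphi$; letting $t\to\infty$ therefore yields $\tilde g_{ij}\to r_\infty^2\sigma_{ij}$, the convergence being pointwise (indeed in $C^m$ for every $m$) by the uniform estimate \eqref{phi bound m}.

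For \emph{item (ii)}, I would take the Weingarten-map formula $h^i_j = [uv]^{-1}\big(\delta^i_j + (-\sigma^{ik} + \varphi^i\varphi^k v^{-2})\varphi_{kj}\big)$ and multiply by $\e^{t/n}$, using $u^{-1}\e^{t/n} = \tilde u^{-1}$, to obtain
\[
 \tilde h^i_j = [\tilde u\, v]^{-1}\big(\delta^i_j + (-\sigma^{ik} + \varphi^i\varphi^k v^{-2})\varphi_{kj}\big).
\]
Now $\tilde u\to r_\infty$, and $v\to 1$ since $v^2 = 1 + |D\varphi|_\sigma^2$ and Lemma \ref{Dphi exp} applies. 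The remaining Hessian term is controlled through $D^2\varphi = \tilde u^{-1}D^2\tilde u - \tilde u^{-2}D\tilde u\otimes D\tilde u$ together with the exponential decay of $|D^2\tilde u|$ from \eqref{D2 u bound} and of $|D\tilde u|$; hence $\varphi_{kj}\to 0$ and $\tilde h^i_j\to r_\infty^{-1}\delta^i_j$.

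For \emph{item (iii)} I would differentiate $\tilde g_{ij} = \e^{-2t/n}g_{ij}$ in $t$, apply the product rule and substitute the unrescaled evolution equation \eqref{ev eq gij}:
\[
 \dot{\tilde g}_{ij} = -\frac{2}{n}\e^{-2t/n}g_{ij} + \e^{-2t/n}\cdot 2H^{-1}h_{ij},
\]
and then recognise $\e^{-2t/n}g_{ij} = \tilde g_{ij}$ and $\e^{-2t/n}H^{-1}h_{ij} = (\e^{t/n}H)^{-1}(\e^{-t/n}h_{ij}) = \tilde H^{-1}\tilde h_{ij}$, which is exactly \eqref{ev eq tilde gij}. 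For \emph{item (iv)} the quickest route is $\de\!\tilde\mu = \sqrt{\det\tilde g_{ij}}\,\de\!x = \e^{-t}\sqrt{\det g_{ij}}\,\de\!x = \e^{-t}\,\de\!\mu$, whence Lemma \ref{ev eq measure} gives $\dt{}(\de\!\tilde\mu) = -\e^{-t}\,\de\!\mu + \e^{-t}\,\de\!\mu = 0$; alternatively one may apply Jacobi's formula to $\sqrt{\det\tilde g_{ij}}$ using part (iii) and the traces $\tilde g^{ij}\tilde h_{ij} = \tilde H$ and $\tilde g^{ij}\tilde g_{ij} = n$. All four computations are routine; the only delicate point is the passage in (i) and (ii) from the decay estimates, originally obtained in integral or norm form, to pointwise convergence, and this is precisely what the uniform higher-order bounds \eqref{phi bound m} (equivalently \eqref{D2 u bound}) furnish.
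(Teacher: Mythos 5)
Your proposal is correct and follows essentially the same route as the paper: rewrite $\tilde g_{ij}$ and $\tilde h^i_j$ in terms of $\tilde u$, $v$ and $D^2\varphi$ and pass to the limit using the decay estimates, obtain (iii) by the product rule from \eqref{ev eq gij}, and deduce (iv) from the measure evolution (your $\de\!\tilde\mu=\e^{-t}\de\!\mu$ computation is equivalent to the paper's observation that the trace of \eqref{ev eq tilde gij} vanishes). The only difference is that you spell out more explicitly why $\varphi_{kj}\to 0$ pointwise, which the paper leaves implicit.
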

\begin{proof} 
	Using the already procured estimates we conclude for (i) and (ii)
\[
	\tilde{g}_{ij} = \e^{-2t/n}g_{ij} = \tilde{u}^2 (\sigma_{ij} + \varphi_i \varphi_j) \xrightarrow[t \rightarrow \infty]{}  r_{\infty}^2 \sigma_{ij},
\]
\[
	\tilde{h}^i_j = [\tilde{u}v]^{-1} (\delta^i_j - \sigma^{ik} \varphi_{kj} + \varphi^{i}\varphi^{k}\varphi_{kj}v^{-2}) \xrightarrow[t \rightarrow \infty]{}  r_{\infty}^{-1} \delta^i_{j}.
\]
(iii) can be obtained from the evolution equation \eqref{ev eq gij} and the product rule and (iv) is simply a consequence of (iii) and the evolution equation \eqref{ev eq g} (notice how contracting \eqref{ev eq tilde gij} with $\tilde{g}^{ij}$ gives $0$). 
\end{proof}
\noindent What Lemma \ref{rescaled volume}(iii) alludes to is that during the evolution process of the surfaces, the volume stays constant. Hence, $r_{\infty}$ depends only on the initial embedding $u_0$.
\begin{cor}
	\begin{equation}
		r_{\infty} =  \left[ \frac{|M_0|}{|M^n|}\right]^{1/n}.
	\end{equation}
\end{cor}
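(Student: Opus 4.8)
The plan is to combine two conserved quantities: the exponential growth of the area $|M_t|$ from Lemma~\ref{ev eq measure} and its corollary, and the constancy of the \emph{rescaled} area from Lemma~\ref{rescaled volume}(iv). First I would observe that the rescaled induced measure is $\de\!\tilde\mu = \tilde u^n\sqrt{\sigma}\,(1+|D\varphi|^2)^{1/2}\,\de\!x$, which follows from $\tilde g_{ij}=\tilde u^2(\sigma_{ij}+\varphi_i\varphi_j)$ and the standard determinant identity $\det(\sigma_{ij}+\varphi_i\varphi_j)=(1+|D\varphi|^2_\sigma)\det\sigma$. Hence the total rescaled area is
\begin{equation*}
	|\tilde M_t| = \int_{M^n} \tilde u(t,\cdot)^n \sqrt{1+|D\varphi|^2_\sigma}\;\de\!\mu_\sigma.
\end{equation*}
By Lemma~\ref{rescaled volume}(iv) this is independent of $t$, so in particular it equals its value at $t=0$.

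Next I would take the limit $t\to\infty$ inside the integral. From the asymptotics already established, $\tilde u(t,\cdot)\to r_\infty$ uniformly (this is exactly the statement preceding the corollary, using \eqref{D2 u bound}, \eqref{u bound} and Lemma~\ref{Dphi exp}), and $|D\varphi|^2_\sigma\to 0$ uniformly by Lemma~\ref{Dphi exp}. Since $M^n$ is compact and all integrands are uniformly bounded, dominated convergence gives
\begin{equation*}
	|\tilde M_t| \;\xrightarrow[t\to\infty]{}\; \int_{M^n} r_\infty^{\,n}\,\de\!\mu_\sigma \;=\; r_\infty^{\,n}\,|M^n|.
\end{equation*}
Combining with the previous paragraph, $r_\infty^{\,n}|M^n| = |\tilde M_0|$. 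Finally, $|\tilde M_0| = |M_0|$ because at $t=0$ the rescaling factor $\e^{-t/n}$ equals $1$ and $\tilde x(0,\cdot)=x_0$. Therefore $r_\infty^{\,n} = |M_0|/|M^n|$, which yields the claimed formula upon taking the $n$-th root (legitimate since $r_\infty>0$, being a uniform limit of the positive, uniformly-bounded-below functions $\tilde u$).

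The only step requiring any care is justifying that $\tilde u\to r_\infty$ is genuinely \emph{uniform} rather than merely along a subsequence or in an $L^2$ sense; but this is already built into the discussion of the asymptotics above, since $|D^2\tilde u|$ and $|D\tilde u|$ decay uniformly and $\tilde u$ stays in a compact interval, forcing $\tilde u(t,\cdot)$ to converge uniformly to a constant. Given that, the argument is a short application of the two volume identities and dominated convergence, so I do not anticipate any substantial obstacle. One could alternatively bypass the limit entirely by noting $|\tilde M_t|=|M_t|\e^{-t}=|M_0|$ directly from the corollary to Lemma~\ref{ev eq measure}, and then only the $t\to\infty$ evaluation $|\tilde M_t|\to r_\infty^n|M^n|$ is needed; I would present it this way, as it is cleaner.
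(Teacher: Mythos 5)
Your argument is correct and follows the same route as the paper: the constancy of the rescaled area (Lemma on $\de\!\tilde\mu$, or equivalently $|M_t|=|M_0|\e^{t}$ combined with the factor $\e^{-t}$ from rescaling) gives $|\tilde M_t|=|M_0|$, while the uniform convergence $\tilde u\to r_\infty$, $|D\varphi|\to 0$ identifies the limiting area as $r_\infty^n|M^n|$ via $\de\!\mu^{\alpha}=r^n\de\!\mu^{\sigma}$. You merely spell out the determinant identity and the passage to the limit more explicitly than the paper does; no substantive difference.
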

\begin{proof}
	Equation \eqref{metric alpha} implies that the volume element on the level sets of the radial function $r$ in $N^{n+1}$ and the volume element on $M^n$, betokened $d\mu^{\alpha}$ and $d\mu^{\sigma}$ respectively, are related by
\[
	\de\!\mu^{\alpha} = r^n \de\!\mu^{\sigma},
\]
since, in local coordinates
\[
	\sqrt{\dtm\{\alpha_{ij}\}} = r^n \sqrt{\dtm \{\sigma_{ij} \}}.
\]
Thus, we have 
\[
	|M_0| = r_{\infty}^n |M^n|.
\]
\end{proof}


\newpage
\appendix

\newcommand{\Addresses}{{
  \bigskip
  \footnotesize

  T.~Mullins, \textsc{R3S, Fraunhofer IZM, Gustav-Meyer-Allee 25, 13355 Berlin, Germany }\par\nopagebreak
  \href{mailto:thomas.mullins@tu-berlin.de}{thomas.mullins@tu-berlin.de} 
}
}
\Addresses
\


\begin{thebibliography}{MSY}

\bibitem{cns} L. Cafferelli, L. Nirenberg \& J. Spruck {\em The Dirichlet problem for nonlinear second order elliptic equations, III; Functions of the eigenvalue of the Hessian}, Acta Math. {\bf155 } (1985) 261-301

\bibitem{do carmo} M. do Carmo, {\em Riemannian geometry. Translated from Portugese by Francis Flaherty}, Birkh\"auser Boston 1992

\bibitem{do carmo df} M. do Carmo, {\em Differential Forms and Applications}, Springer-Verlag Berlin Heidelberg, 1994

\bibitem{clutterbuck} J. Clutterbuck, {\em Parabolic equations with continuous initial data}, arXiv:math/0504455v1 [math.AP], 2004

\bibitem{ding} Qi Ding, {\em The inverse mean curvature flow in rotationally symmetric spaces}, Chin. Ann. Math. {\bf 32}B(1) (2011) 27-44
\bibitem{evans} L.C. Evans, {\em Partial Differential Equations}, Graduate studies in mathematics, Volume 19, American Mathematical Society, 1998

\bibitem{ger1} C. Gerhardt, {\em Flow of nonconvex hypersurfaces into spheres }, J. Differential Geometry {\bf 32} (1990) 299 - 314

\bibitem{ger2} C. Gerhardt, {\em Curvature Problems}, Series in Geometry and Topology {\bf 39}, International Press 2006

\bibitem{ham} R. Hamilton, {\em Three-manifolds with positive Ricci curvature}, J. Differential Geom. {\bf 17}, Number 2 (1982), 255-306

\bibitem{mcf hui} G. Huisken {\em Flow by mean curvature of convex surfaces into spheres}, J. Differential Geometry {\bf 20} (1984) 237 - 266

\bibitem{geom ev eq} G. Huisken \& A. Polden, {\em Geometric evolution equations for hypersurfaces}, Calc. of Var. and Geom. Ev. Problems, Lecture Notes in Mathematics {\bf 1713} (1999) 45-84

\bibitem{jost} J. Jost, {\em Riemannian geometry and geometric analysis}, Universitext, Springer-Verlag, Berlin, 2002

\bibitem{krylov} N.V. Krylov, {\em Nonlinear elliptic and parabolic equations of the second order},  Reidel, Dordrecht, 1987

\bibitem{lady} O. A. Ladyzenskaja, V. A. Solonnikov \& N. N. Ura'ceva {\em Linear and Quasi-linear Equations of Parabolic Type. Translated from the Russian by S.Smith}, Translations of Mathematical Monographs. {\bf 23} American Mathematical Society, Rhode Island, 1968

\bibitem{lieberman} G.M. Lieberman, {\em Second Order Parabolic Differential Equations}, World Scientific, 1996 

\bibitem{marquardt} T. Marquardt, {\em The inverse mean curvature flow for hypersurfaces with boundary}, Dissertation, Freie Universit{\"a}t Berlin 2012

\bibitem{mullins} T. Mullins, {\em On minimal submanifolds}, Freie Universit{\"a}t Berlin 2012

\bibitem{petersen} P. Petersen, {\em Warped products}, Petersen's UCLA webpage, http://www.math.ucla.edu/~petersen/warpedproducts.pdf

\bibitem{penrose} R. Penrose, {\em The Road to Reality}, Jonathan Cape, London, 2004

\bibitem{simons} J. Simons, {\em Minimal varieties in Riemannian manifolds}, Ann. of Math. {\bf 88} (1968) 62 - 105

\bibitem{spivak} M. Spivak, {\em A Comprehensive Introduction to Differential Geometry, Volume Three}, Publish or Perish Inc., 1990

\bibitem{urbas} J.I.E. Urbas, {\em On the expansion of starshaped hypersurfaces by symmetric functions of their principal curvatures}, Math. Zeitschrift {\bf 205} (1990) 355-372

\bibitem{wald} R.M. Wald, {\em General Relativity}, The University of Chicago Press, Chicago, 1984

\end{thebibliography}
\end{document}